\documentclass[a4paper,11pt,leqno]{article}
\pdfoutput=1
\usepackage{amsmath,amsfonts,amsthm,amssymb}
\usepackage[T2A,T1]{fontenc}
\usepackage[utf8]{inputenc}
\usepackage{CJKutf8} 
\usepackage[russian,greek,english]{babel}
\usepackage[left=3cm, right=3cm, top=3cm, bottom=3.5cm]{geometry}
\usepackage{graphicx}
\usepackage{enumerate}

\usepackage[hyphens]{url}
\usepackage{hyperref}

\hypersetup{
 pdftitle={Hamilton combinatorics},
 pdfauthor={Bernold Fiedler},
 colorlinks=true,
 linkcolor=blue,
 citecolor=blue,
 filecolor=blue,
 urlcolor=blue}

 \usepackage{afterpage}

\usepackage{wrapfig} 
\usepackage[format=plain,labelfont=bf,font=small]{caption}
\usepackage{xcolor}
\usepackage[arrow, matrix, curve]{xy}
\usepackage{float}

\usepackage{caption}
\usepackage{tabulary}
\usepackage{array}
\newcolumntype{N}[1]{>{\centering\arraybackslash}m{#1}}

\newcommand{\transv}{\mathrel{\text{\tpitchfork}}}
\makeatletter
\newcommand{\tpitchfork}{%
  \vbox{
    \baselineskip\z@skip
    \lineskip-.52ex
    \lineskiplimit\maxdimen
    \m@th
    \ialign{##\crcr\hidewidth\smash{$-$}\hidewidth\crcr$\pitchfork$\crcr}
  }%
}
\makeatother
\usepackage{latexsym}

\usepackage[notref,notcite,color, final 
]{showkeys}

\definecolor{refkey}{rgb}{1,0,0}
\definecolor{labelkey}{rgb}{1,0,0}

  \mathchardef\ordinarycolon\mathcode`\:
  \mathcode`\:=\string"8000
  \begingroup \catcode`\:=\active
    \gdef:{\mathrel{\mathop\ordinarycolon}}
  \endgroup
  
\theoremstyle{plain}
\newtheorem{theorem}{Theorem}[section]
\newtheorem{lemma}[theorem]{Lemma}
\newtheorem{proposition}[theorem]{Proposition}
\newtheorem{corollary}[theorem]{Corollary}

\newtheorem{definition}[theorem]{Definition}

\newcommand\eps{\varepsilon}

\renewcommand\rho{\varrho}
\renewcommand\phi{\varphi}

\newcommand{\bN}{\mathbb{N}}
\newcommand{\bZ}{\mathbb{Z}}

\newcommand{\bR}{\mathbb{R}}
\newcommand{\bC}{\mathbb{C}}
\newcommand{\bS}{\mathbb{S}}
\newcommand{\bP}{\mathbb{P}}
\newcommand{\bO}{\mathbb{O}(2)}
\newcommand{\bSO}{\mathbb{SO}(2)}

\def\cA{\mathcal{A}}
\def\cE{\mathcal{E}}
\def\cR{\mathcal{R}}
\def\cP{\mathcal{P}}
\def\cF{\mathcal{F}}
\def\cH{\mathcal{H}}
\def\cC{\mathcal{C}}

\def\cS{\mathcal{S}}

\def\cM{\mathcal{M}}
\def\cN{\mathcal{N}}
\def\cO{\mathcal{O}}

\def\fS{\mathfrak{S}}
\def\fa{\mathfrak{a}}
\def\fc{\mathfrak{c}}

\def\cFP{\mathcal{F}_\mathbf{g}^\mathcal{P}}
\def\cRP{\mathcal{R}_\mathbf{g}^\mathcal{P}}

\def\cCP{\mathcal{C}_\mathbf{g}^\mathcal{P}}

\def\cAgP{\mathcal{A}_\mathbf{g}^\mathcal{P}}
\def\cEgP{\mathcal{E}_\mathbf{g}^\mathcal{N}}

\def\cHgP{\mathcal{H}_\mathbf{g}^\mathcal{P}}
\def\cCgP{\mathcal{C}_\mathbf{g}^\mathcal{P}}

\def\cAgN{\mathcal{A}_\mathbf{g}^\mathcal{N}}
\def\cEgN{\mathcal{E}_\mathbf{g}^\mathcal{N}}

\def\cHgN{\mathcal{H}_\mathbf{g}^\mathcal{N}}
\def\cCgN{\mathcal{C}_\mathbf{g}^\mathcal{N}}

\def\SSOP{\mathrm{Sturm}^\cP(\mathbf{u},\mathbf{u}_\mathbf{x})}
\def\SOP{\mathrm{Sturm}^\cP(\mathbf{u},\mathbf{u}_\mathbf{x}^2)}
\def\SHP{\mathrm{Sturm}^\cP(\mathbf{u})}
\def\SxP{\mathrm{Sturm}^\cP(\mathbf{x},\mathbf{u},\mathbf{u}_\mathbf{x})}
\def\SSON{\mathrm{Sturm}^\cN(\mathbf{u},\mathbf{u}_\mathbf{x})}
\def\SON{\mathrm{Sturm}^\cN(\mathbf{u},\mathbf{u}_\mathbf{x}^2)}
\def\SHN{\mathrm{Sturm}^\cN(\mathbf{u})}
\def\SxN{\mathrm{Sturm}^\cN(\mathbf{x},\mathbf{u},\mathbf{u}_\mathbf{x})}
\def\SO{\mathrm{Sturm}(\mathbf{u},\mathbf{u}_\mathbf{x}^2)}
\def\SH{\mathrm{Sturm}(\mathbf{u})}

\begin{document}

\title{\LARGE{\textbf{
Counting Hamiltonian Sturm permutations:\\
generating functions and Gaussian distributions\\}}}

\author{
\\[1em]
\Large{\emph{in memoriam}}
\\ 
\\[1em]
Bernold Fiedler* and Carlos Rocha**†
\\[2cm]
}

\date{\small{version of \today}}
\maketitle
\thispagestyle{empty}
\vfill
\setlength{\parindent}{0cm}
*\\
        Institut f\"ur Mathematik\\
        Freie Universit\"at Berlin\\
        Arnimallee 7, D--14195 Berlin, GERMANY\\
        \url{fiedler@math.fu-berlin.de}\\
        \url{http://dynamics.mi.fu-berlin.de}\\[5mm]
**\\
        Centro de An\'alise Matem\'atica, Geometria e Sistemas Din\^amicos,\\
				Instituto Superior T\'ecnico, Universidade de Lisboa\\
        Av. Rovisco Pais 1, 1049--001 Lisbon, PORTUGAL\\


\newpage
\pagestyle{plain}
\pagenumbering{roman}
\setcounter{page}{1}

\begin{abstract}
\noindent
Our combinatorial analysis is motivated by the global dynamics of PDE scalar reaction-diffusion semiflows 
\begin{equation}
\label{aPDEN}
\mathbf{u_t} = \mathbf{u_{xx}} + \mathbf{g}(\mathbf{u}),\qquad 0<\mathbf{x}<1,
\end{equation}
for the solutions $\mathbf{u}=\mathbf{u}(\mathbf{t},\mathbf{x}),\ \mathbf{t}\geq 0$, under Neumann boundary conditions $\mathbf{u_x}=0$ at $\mathbf{x}=0,1$.
For dissipative $C^2$ nonlinearities $\mathbf{g}$, the PDE \eqref{aPDEN} possesses a compact global attractor $\cA=\cA_\mathbf{g}$.
Dissipativity requires $\mathbf{g}(\mathbf{u})\cdot\mathbf{u}<0$ for large $|\mathbf{u}|$.
Under the generic nondegeneracy assumption that all equilibrium solutions $\mathbf{u}(\mathbf{t},\mathbf{x})=\mathbf{v}(\mathbf{x})$ are hyperbolic, the number $2n+1$ of all equilibria $\mathbf{v}$ is finite and odd.
The global attractors $\cA=\cA_\mathbf{g}$ can then be classified by the orderings of their equilibria $\mathbf{v}$ at the boundaries $\mathbf{x}=0,1$.
We encode the boundary orders as \emph{Hamiltonian Sturm permutations}.
The name ``Sturm'' refers to nodal properties of PDE solutions $\mathbf{u}(\mathbf{t},\mathbf{x})$.
``Hamiltonian'' refers to the second order pendulum ODE for equilibria $\mathbf{v}(\mathbf{x})$:
\begin{equation}
\label{aODE}
0 = \mathbf{v_{xx}} + \mathbf{g}(\mathbf{v})\,.
\end{equation}
We determine the generating function $a(z)=\sum_n a_nz^n$ for the counts $a_n$ of Hamiltonian Sturm permutations. 
For $n\rightarrow\infty$, this provides explicit asymptotics of $a_n$\,.\\[0.3em]
\noindent
The prerequisite analysis studies the periodic orbits $(\mathbf{v},\mathbf{v_x})$ of ODE \eqref{aODE}
with period 2 and minimal periods $T=2/\ell>0$, for integer $\ell$. 
Let $q$ count those periodic orbits, and let $2r+1$ denote the odd number of (simple) zeros of $\mathbf{g}$.
Since each periodic orbit of \eqref{aODE} generates two Neumann equilibria of \eqref{aPDEN}, the number $2n+1$ of PDE equilibria satisfies $n=r+q$.
Summing over those $r,q$, we obtain $a_n=\sum b_{rq}$\,, where $b_{rq}$ is the refined count of Hamiltonian Sturm permutations with $2r+1$ spatially homogeneous equilibria and $2q$ spatially non-homogeneous equilibria.
We also determine the explicit generating function $b(x,y)=\sum_{r,q} b_{rq}\,x^ry^q$\,.\\[0.3em]
\noindent
This implies asymptotically Gaussian distributions of the probabilities $p_{nr}=b_{rq}/a_n$ with $r+q=n$, asymptotically for large $n$.
We provide leading terms for means $\boldsymbol{\mu}_n$ and variances $\boldsymbol{\sigma}_n^2$\,, with error estimates of order $1/n$.
All asymptotics are based on work by Flajolet and Sedgewick.\\[0.3em]
\noindent
After some numerical illustrations, we conclude with remarks on nonlinearities $\mathbf{g}(\mathbf{u},\mathbf{u_x})$ under periodic boundary conditions $\mathbf{x}\in\bS^1=\bR/2\bZ$.
This gives rise to $q$ rotating or frozen PDE waves, instead of $2q$ spatially non-homogeneous Neumann equilibria.
\end{abstract}

\newpage
{\small{
\tableofcontents
\listoffigures
\listoftables
}}


\newpage
\pagenumbering{arabic}
\setcounter{section}{-1}
\setcounter{page}{1}
\setcounter{equation}{0}
\vspace{1cm}

\section{In memoriam Carlos Rocha}
\numberwithin{equation}{section}
\numberwithin{figure}{section}
\numberwithin{table}{section}

It was forty years ago, in July 1986 at sunny Lisboa, when Jack Hale introduced us to each other.
The occasion was the conference on \emph{``Dynamics of Infinite Dimensional Systems''} there \cite{Lisboa86}.
The first of our 21 joint publications which followed, only appeared ten years later \cite{firo96}.
It was based on fundamental progress in the global analysis of parabolic attractors via the concept of \emph{Sturm permutations}, which had been introduced by Giorgio Fusco and Carlos Rocha \cite{furo91}, meanwhile.

In the present paper, we recall and survey a small part of these developments.
Special focus is on the role of ODE \emph{time maps}, or \emph{period maps}, in the Hamiltonian setting \eqref{aPDEN}, \eqref{aODE} of the abstract.
Carlos was a veritable virtuoso in their use \cite{firo99,roc07,czro08,roc12,firowo12a,rofi24,roc25,rofiln}.
Where I had to rely on programming, he would look at time maps, smile, say ``djub-djub'' and, with a snap of his fingers, draw the associated Sturm permutations and just ``see'' the \emph{connection graphs} of their associated PDE \emph{global attractors}.
Of course, all these concepts will properly be introduced, below.

For decades, our work and our many mutual visits have been supported by FCT/Portugal and the Deutsche Forschungsgemeinschaft (DFG), under various projects and grant numbers, and by our universities.

It was after our previous survey \cite{rofiln} that Carlos brought up the question of counting the resulting Sturm permutations, in a combinatorial way.
Although this question remains very challenging, for general nonlinearities $\mathbf{g}=\mathbf{g}(\mathbf{x},\mathbf{u},\mathbf{u}_\mathbf{x})$, we were making happy progress in the \emph{Hamiltonian} case $\mathbf{g}=\mathbf{g}(\mathbf{u})$ -- thanks to Carlos style ``djubology'' on period maps.

Happy progress we made, indeed, until it was disrupted by ``the silencer of all happiness and sunderer of all bonds of friendship''.
Left behind, I try to patch things up, as best as I can, in our present joint work.

I mourn the irreplaceable loss of much more than a colleague and coauthor.
Carlos has become a most precious friend.
He opened his family and circle of friends to me with unparalleled hospitality.
I will forever admire his calm and conciliatory personality, always ready -- and going out of his way -- to help anyone in need.
Welch wunderbarer Mensch.

May he live on, in any of us who follow him.

\bigskip
Berlin, October 2026 \hfill Bernold Fiedler

\section{Introduction and outline}\label{Intro}

The simplest semilinear scalar reaction-diffusion equation takes the form
\begin{equation}
\label{gu}
\mathbf{u_t} = \mathbf{u_{xx}} + \mathbf{g}(\mathbf{u})\,.
\end{equation}
Similar models arise, e.g., in engineering, physics, chemistry, biology, sociology, etc. 
See for example \cite{Galak,hen81}.
Let us first consider solutions $\mathbf{u}=\mathbf{u}(\mathbf{t},\mathbf{x})$ on the spatial domain $0<\mathbf{x}<1$, under ``no-flux'' \emph{Neumann boundary conditions} $\cN$\,:
\begin{equation}
\label{Nbc}
\mathbf{u_x}=0,\quad \textrm{ at } \mathbf{x}=0,1\,.
\end{equation}
We assume the nonlinearity $\mathbf{g}{:}\ \bR\rightarrow \bR$ to be $C^2$-smooth, for simplicity. 
Then the initial value problem \eqref{gu} for times $\mathbf{t}\geq0$ generates a local semiflow of solutions, e.g. on suitable Sobolev spaces 
$\mathbf{u}(\mathbf{t},\cdot)\in \mathbf{X}\hookrightarrow C^1$.
See for example \cite{hen81,paz83}, and further comments in section \ref{Back}.
In section \ref{Per} we will also consider the analogous setting for \emph{periodic boundary conditions} $\cP$\,:
\begin{equation}
\label{pbc}
\mathbf{x}\in\bS^1=\bR/2\bZ\,.
\end{equation}

In the following, we always assume the nonlinearity $\mathbf{g}$ to be \emph{dissipative}, i.e.\ a certain large ball in $\mathbf{X}$ eventually attracts all solutions \cite{hal88}.
For PDE \eqref{gu} this amounts to a sign condition
\begin{equation}
\label{disign}
\mathbf{g}(\mathbf{u})\cdot\mathbf{u}<0\,,
\end{equation}
for sufficiently large $|\mathbf{u}|\!\in\!\bR$; see \cite{mana97}.
In view of spatially homogeneous equilibria, the sign condition is also necessary for dissipativity.

In particular, dissipative semiflows are global. 
Ladyzhenskaya \cite{lady} has observed how, moreover, all solutions $\mathbf{u}(\mathbf{t},\cdot)\!\in\! \mathbf{X}$ are attracted to a maximal compact, forward and backward time-invariant subset $\cA\!\subset\!\mathbf{X}$, which is called the \emph{global attractor}.
See also \cite{bavi92, hamaol02, raugel, robinson}.
In sections \ref{Back} and \ref{Sign}, we address global attractors $\cA$ for PDEs \eqref{gu},  \eqref{gxuux}, more specifically.
See figure \ref{figCI3}(e) for a first example.

The PDE \eqref{gu} possesses a \emph{gradient-like structure} with respect to the explicit \emph{Lyapunov functional}
\begin{equation}
\label{lyap}
\begin{aligned}
    V(\mathbf{u}(\mathbf{t},\cdot))&:= \int_0^1 \left(\tfrac{1}{2}\mathbf{u}_\mathbf{x}^2-\mathbf{G}(\mathbf{u}) \right)d\mathbf{x} \,;   \\
    \tfrac{d}{d\mathbf{t}}V(\mathbf{u}(\mathbf{t},\cdot))&\phantom{:}=- \int_0^1 \ \mathbf{u}_\mathbf{t}^2\;d\mathbf{x}\ \leq\ 0\,;
\end{aligned}
\end{equation}
see \cite{hen81}.
Here $\mathbf{G}'\!=\!\mathbf{g}$ defines the primitive function, or \emph{potential}, $\mathbf{G}$ of $\mathbf{g}$.
More specifically, the global attractor $\cA$ consists of the set $\cE$ of time-independent equilibria $\mathbf{u}(\mathbf{t},\mathbf{x})\!=\!\mathbf{v}(\mathbf{x})$, and the set $\cH$ of \emph{heteroclinic orbits} $\mathbf{u}(\mathbf{t},\cdot)\rightarrow\mathbf{v}_\pm \textrm{ for } \mathbf{t}\rightarrow\pm\infty$ between distinct equilibria $\mathbf{v}_\pm\in\cE$.
We denote such heteroclinic connections as
\begin{equation}
\label{het}
\mathbf{u}\!:\ \mathbf{v}_-\leadsto \mathbf{v}_+\,.
\end{equation}

Let us also assume \emph{spectral nondegeneracy} of $\mathbf{g}$, i.e.\ we assume all equilibria $\mathbf{v}$ to be \emph{hyperbolic}.
This means that $\lambda\!=\!0$ is not an eigenvalue of the Sturm-Liouville eigenvalue problem which arises by linearization of PDE \eqref{gu} at any equilibrium $\mathbf{v}$.
Emphasizing the gradient-like structure, the finite number of strictly positive eigenvalues is also called the \emph{Morse index} $i(\mathbf{v})$ of $\mathbf{v}$.
We can then represent the global attractor $\cA$ by its (finite, directed, loop-free) connection graph $\cC$, as follows. 
Vertices of $\cC$ are the equilibria.
Directed edges $\mathbf{v}_-\leadsto \mathbf{v}_+$ indicate the existence of a heteroclinic orbit \eqref{het}.
Of course, we lose information on the precise topology of the attractor decomposition
\begin{equation}
\label{AEH}
\cA=\cE\cup\cH
\end{equation}
into equilibria and heteroclinic orbits, when passing to the connection graph $\cC$.
See figure \ref{figCI3}(f), left, for a first example, based on the equilibria in figure \ref{figCI3}(d).
PDE \eqref{gu} was the starting point of a systematic investigation of connection graphs by Brunovsk\'y and the first author \cite{brfi88,brfi89}.
The notion of connection graphs dates back to Morse theory, essentially, with substantial generalization by Charles Conley \cite{conley}.
See \cite{misch} for an introductory survey. 
For more recent related concepts see \cite{Yorke-b,Yorke-a}.

\begin{figure}[t] 
\begin{center}
\includegraphics[width =  0.95\textwidth]{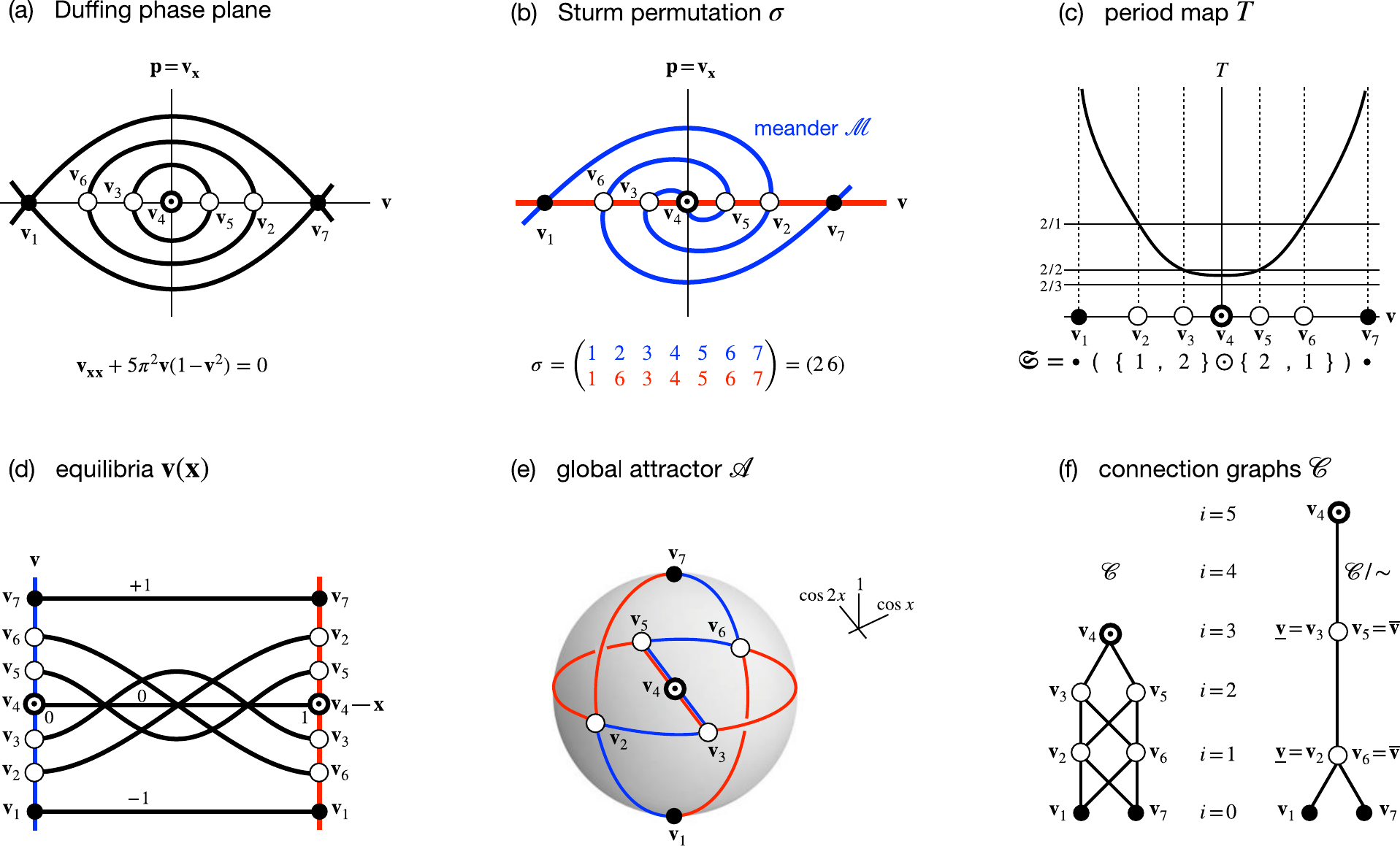}
\end{center}
\caption[The Chafee-Infante attractor $\mathrm{CI}_3$]
{\small\emph{
The three-dimensional Chafee-Infante attractor $\mathrm{CI}_3$ of seven equilibria \cite{chin74, hen81}.
See also case 1.2(i) of table \ref{tablap}.
(a) The ODE phase plane \eqref{gvp} for the Duffing nonlinearity $\mathbf{\mathbf{g}}(\mathbf{u})=\lambda^2 \mathbf{u}(1-\mathbf{u}^2),\ 2\!<\!\lambda/\pi\!<\!3$.
(b) The associated ODE Sturm meander $\cM_\mathbf{g}$ and the Sturm permutation $\sigma_\mathbf{g}=(2\,6)\in S_7$\,.
(c) The time map $T$ and the full lap signature $\fS(T)$.
\hbox{(d) The} spatial equilibrium profiles $\mathbf{v}_j\in\cEgN$. 
Note the ordering by the Sturm permutation $\mathbf{v}_{\sigma_\mathbf{g}(j)}$ at the right boundary $\mathbf{x}=1$.
(e) Sketch of the 3-ball Chafee-Infante attractor $\mathrm{CI}_3=\cAgN$. 
Note the unstable dimensions $i=0$ of the attracting South and North poles $\mathbf{v}_1, \mathbf{v}_7\equiv\pm1$, marked by bullets $\bullet$\,.
The saddles $\underline{\mathbf{v}}=\mathbf{v}_2,\ \overline{\mathbf{v}}=\mathbf{v}_6$ at Morse index $i=1$, and the $i=2$ saddles $\underline{\mathbf{v}}=\mathbf{v}_3,\ \overline{\mathbf{v}}=\mathbf{v}_5$ are indicated by circles $\circ$\,.
The surface 2-sphere is the invariant boundary of the interior, alias of the unstable manifold $W^u(\mathbf{v}_4)$ of the $i=3$ PDE repeller $\mathbf{v}_4$\,.
Explicitly, $\mathbf{v}_4\equiv0=\odot$ is the only ODE center.
(f) The connection graph $\cCgN$ and its reduced variant $\cCgN/\sim$, where pairs $\underline{\mathbf{v}},\overline{\mathbf{v}}$ have been identified.
Vertical ordering is by Morse index $i$.
Edges in the reduced graph $\cCgN/\sim$ indicate that Neumann heteroclinic orbits exist between some equilibrium representatives of its collapsed equilibrium pairs.
}}
\label{figCI3}
\end{figure}

In section \ref{Back} we will recall how \emph{Sturm permutations} $\sigma$, an ingenious device introduced by Fusco and Rocha \cite{furo91}, extended this classification to reaction-advection nonlinearities $\mathbf{g}=\mathbf{g}(\mathbf{x},\mathbf{u},\mathbf{u_x})$.
It simply compares the ordering of the equilibria $\mathbf{v}\!\in\!\cE$ at the left and right Neumann boundaries $\mathbf{x}\!=\!0$ and $\mathbf{x}\!=\!1$, as follows.
 We first label equilibria $\cE=\{\mathbf{v}_1,\ldots,\mathbf{v}_N\}$ by their ordering at $\mathbf{x}=0$:
\begin{equation} \label{sigma0}
\mathbf{v}_1 < \mathbf{v}_2 < \dots < \mathbf{v}_N\,,\qquad \textrm{at } \mathbf{x}=0\,.
\end{equation}
The ordering of the same equilibria at the other Neumann boundary $\mathbf{x}\!=\!1$ will be different, in general.
This defines a \emph{Sturm permutation} $\sigma\!=\!\sigma_\mathbf{g}$ such that
\begin{equation} \label{sigma1}
\mathbf{v}_{\sigma(1)} < \mathbf{v}_{\sigma(2)} < \dots < \mathbf{v}_{\sigma(2n+1)}\qquad \textrm{at } \mathbf{x}=1\,.
\end{equation}
See figures \ref{figCI3}(b) and (d).

The central importance of the Sturm permutation $\sigma$ arises, because it determines the global attractor $\cA$, up to $C^0$ orbit equivalence \cite{firo00}, and the connection graph $\cC$ \cite{firo96}.
See figures \ref{figCI3}(e) and (f), left.

Although \emph{Sturm permutations} can be characterized in purely combinatorial terms, which do not involve any ODE or PDE analysis \cite{firo99}, their counting combinatorics remains elusive.
The Sturm permutations form a subclass of \emph{meander permutations}, which were introduced in \cite{arn89}.
See figure \ref{figCI3}(b) for a first example, and \cite{kar17} for a beautiful introduction, with many illustrations.
Even partial countings of meander permutations, in their own right, involve very advanced combinatorial challenges; see \cite{zograf,lazv04} and the survey \cite{zografICM}.

In our present paper, we return to the even more restrictive class of dissipative nondegenerate reaction-diffusion equations \eqref{gu}.
Already the original analysis \cite{brfi88,brfi89} had been based on equilibria $\mathbf{v}(\mathbf{x})$ and the period maps of their associated ODEs; see section \ref{Sign}.
Equilibria solve the second order pendulum equation
\begin{equation}
\label{gv}
\mathbf{v_{xx}} + \mathbf{g}=0
\end{equation}
with integrable Hamiltonian $\mathbf{H}=\tfrac{1}{2}\mathbf{v}_\mathbf{x}^2+\mathbf{\mathbf{G}}(\mathbf{v})$ and potential energy $\mathbf{G}'(\mathbf{v})=\mathbf{g}(\mathbf{v})$.
Therefore, we call the subclass of Sturm permutations $\sigma$ arising from PDE \eqref{gu}, with a Hamiltonian $\mathbf{H}$, \emph{Hamiltonian Sturm permutations}.

In theorem \ref{thmHamrev} below, we recall a purely combinatorial characterization of Hamiltonian Sturm permutations, as integrable Sturm involutions.
The counting combinatorics of Sturm permutations and meanders remains elusive, in general. 
In the present paper, however, 
\begin{center}
\textbf{it is our main goal to provide detailed results on the counting combinatorics of the restricted class of Hamiltonian Sturm permutations.}
\end{center}
See the theorems at the end of this introduction, for our \emph{main results}.

To prepare, we have to look at the peculiarities of Neumann PDE equilibria in the Hamiltonian ODE setting \eqref{gvp} with nonlinearities $\mathbf{g}\!=\!\mathbf{g}(\mathbf{u})$, in some more detail.
Trivial, spatially homogeneous PDE equilibria are the zeros $\mathbf{e}$ of $\mathbf{g}$, i.e. $\mathbf{v}(\mathbf{x})\equiv\mathbf{e}$.
By hyperbolic nondegeneracy, all zeros are simple: $\mathbf{g}'(\mathbf{e})\neq0$.
We call $\mathbf{e}$ a \emph{center} if $\mathbf{g}'(\mathbf{e})>0$.
In the pendulum $(\mathbf{v},\mathbf{p})$-plane
\begin{equation}
\label{gvp}
\begin{aligned}
    \mathbf{v}_\mathbf{x}&=\mathbf{p} \,,  \\
    \mathbf{p}_\mathbf{x}&=-\mathbf{g}  \,,
\end{aligned}
\end{equation}
centers $(\mathbf{v},\mathbf{p})=(\mathbf{e},0)$ are surrounded by nonstationary periodic ODE orbits of constant energy $\mathbf{H}$.
On the horizontal $\mathbf{v}$-axis, centers alternate with \emph{ODE-saddles} $(\mathbf{e},0)$ where $\mathbf{g}'(\mathbf{e})<0$.
Let $r$ count the number of centers.
By dissipativity \eqref{disign}, we obtain an odd number
\begin{equation}
\label{r}
2r+1
\end{equation}
of spatially homogeneous PDE equilibria.
Throughout, we skip the trivial case $r\!=\!0$ of a globally attracting PDE equilibrium $\cA\!=\!\cE\!=\!\{\mathbf{e}\}$ and assume the presence of $r\geq1$ ODE-centers.
For an example with $r\!=\!1$ see the Duffing oscillator of figure \ref{figCI3}(a).

Consider spatially non-homogeneous PDE equilibria $\mathbf{v}(\mathbf{x})$ next.
 By reversibility of the pendulum \eqref{gv}, \eqref{gvp} in $\mathbf{x}$, we may reflect $\mathbf{v}(\mathbf{x})$ through the Neumann boundaries at $\mathbf{x}=0,1$.
 Then $\mathbf{v}(\mathbf{x})$ becomes spatially periodic with period 2.
 The minimal period $T$ is an integer fraction 
\begin{equation}
\label{Tell}
T=2/\ell>0\,,
\end{equation}
for some \emph{lap number} $\ell>0$.
Hyperbolicity of equilibria is equivalent to $2/\ell$ being regular values of the period map $T$.
For odd $\ell$, the values $\mathbf{v}(\mathbf{x})$ at the Neumann boundaries $\mathbf{x}=0,1$ represent both min/max extrema of $\mathbf{v}$.
For even $\ell$, i.e. when 1 itself is a period, the same extremum appears at both boundaries.
In either case, we see how non-homogeneous PDE equilibria $\mathbf{v}(\mathbf{x})$ are grouped in pairs 
\begin{equation}
\label{vpair}
\mathbf{\underline{v}}\,,\ \mathbf{\overline{v}}\,.
\end{equation} 
Here $\mathbf{\underline{v}}$ attains its minimum at the left boundary $\mathbf{x}\!=\!0$, and $\mathbf{\overline{v}}$ its maximum.
The two variants are shifted by their half-period $T/2$ in $\mathbf{x}$.
Such phenomena have been called a \emph{hidden symmetry} by \cite{gomasc84}.
Periodic boundary conditions are more elucidating here; see section \ref{Per}.

\begin{figure}[t] 
\begin{center}
\includegraphics[width = \textwidth]{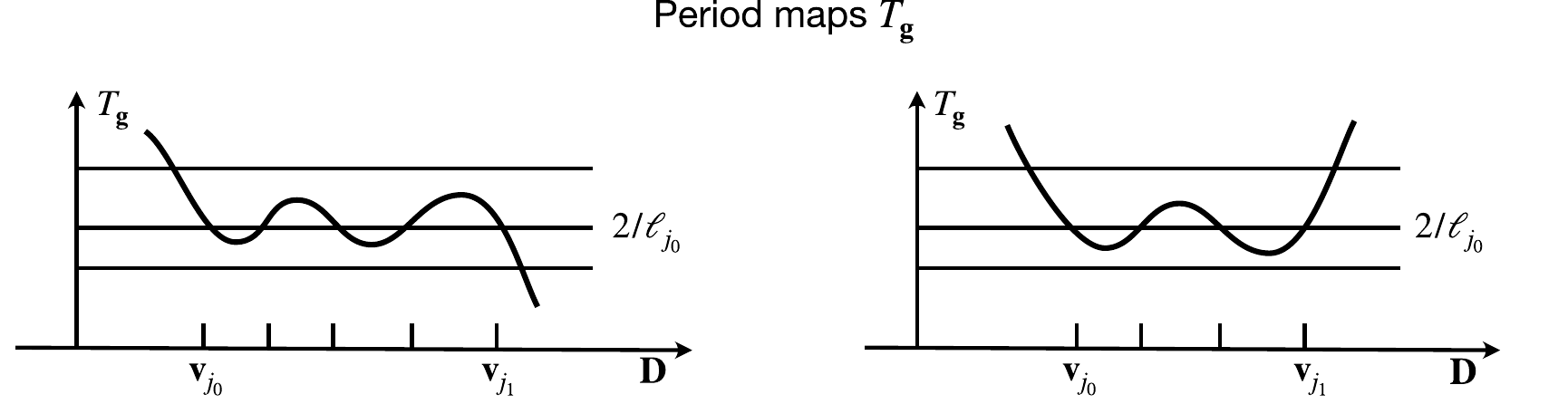}
\end{center}
\caption[Period maps]{\small\emph{ Examples of two period maps $T=T_\mathbf{g}(\mathbf{v}_0)$, partial view.
Horizontal axis: $\mathbf{v}_0\in\mathbf{D}:=\mathbf{C}\cap\{\mathbf{p}=0\}$\,. Vertical axis: $T$.
We assume that all values $T=2/\ell,\ \ell\in\bN$, are regular (horizontal lines),
i.e. the period maps $T$ cross such levels transversely, alternatingly up or down. 
In each connected component of $\mathbf{D}$, and at fixed $\ell$, each crossing indicates boundary values of the same type, say of local minimum type $\mathbf{v}_0=\mathbf{\underline{v}}(0)$.
Note the odd number of consecutive crossings at locally transient mid-level $\ell$ (left), and the even number of crossings at locally extremal $\ell$ (right).}}
\label{figTg}
\end{figure}

In \cite{rofiln} we have classified Hamiltonian Sturm permutations and, thereby, PDE global attractors $\cA$ and their connection graphs $\cC$ of \eqref{gu}, based on a schematic description of the \emph{period maps} 
\begin{equation}
\label{Tmap}
T=T_\mathbf{g}\!: \quad \mathbf{C}\rightarrow (0,\infty)
\end{equation}
associated to the pendulum ODE \eqref{gv}, \eqref{gvp}.
Here the \emph{cyclicity set} $(\mathbf{v},\mathbf{p})\!\in\!\mathbf{C}\!\subset\!\bR^2$ denotes the points on nonstationary periodic orbits of \eqref{gvp}.
See figures \ref{figCI3}(c) and \ref{figTg} for examples.
In section \ref{Sign} we recall the schematic description of period maps $T$ by \emph{full lap signatures} $\fS(T)$, as introduced in \cite{rofiln}.
That description will enable us to address the following counting questions.

Let $r\!\geq\!1$ count the ODE-centers, and $q\!\geq\!0$ the periodic orbits with arbitrary minimal period $T\!=\!2/\ell$, respectively. 
See \eqref{r} and \eqref{Tell}.
Then we have just seen that the total number $|\cE|$ of hyperbolic equilibria $\mathbf{e},\mathbf{v}\!\in\!\cE$ of PDE \eqref{gu} is odd. 
Indeed,
\begin{equation}
\label{n}
|\cE|=N=2n+1\,,\quad\textrm{where}\quad n=r+q\,.
\end{equation}

Let $a_n$ count the global attractors $\cA_\mathbf{g}$ with $2n+1\geq3$ equilibria, as distinguished by their Hamiltonian Sturm permutations $\sigma_\mathbf{g}$ of \eqref{sigma0}, \eqref{sigma1}, alias by their full lap signatures $\mathfrak{S}(T_\mathbf{g})$ defined in section \ref{Sign}.
In section \ref{Asym}, we determine the explicit \emph{generating function} 
\begin{equation}
\label{az}
a(z)=\sum_{n\geq1}\, a_n\,z^n
\end{equation}
of the counts $a_n$\,.
See also corollary \ref{coraz}.
For $n\!\rightarrow\!\infty$, this will provide explicit leading asymptotics of $a_n$\,, in theorem \ref{thmasa}.

Next let $b_{rq}$ denote the refined count of Hamiltonian Sturm permutations $\sigma_\mathbf{g}$\,, alias full lap signatures $\mathfrak{S}(T_\mathbf{g})$. 
Here $r$ fixes the number of ODE-centers, i.e. of the $2r+1$ spatially homogeneous PDE equilibria. The second index, $q$, fixes the number of pairs $\mathbf{\underline{v}}\,,\ \mathbf{\overline{v}}$ of spatially non-homogeneous equilibria.
See \eqref{r}, \eqref{vpair}.
Since $r\!+\!q\!=\!n$, fixing $n$ implies 
\begin{equation}
\label{ab}
a_n\ =\ \sum_{r=1}^n \ b_{r,\,n-r}\ .
\end{equation}
We also define the refined generating function 
\begin{equation}
\label{bxy}
b(x,y)=\sum_{q\geq0}\sum_{r\geq1}\, b_{rq}\,x^ry^q\,.
\end{equation}

\begin{theorem}\label{thmbxy}
The refined generating function $b$ of Hamiltonian Sturm permutations is
\begin{equation}
\label{bb0}
b(x,y)=b_0/(1-b_0)\,.
\end{equation}
Here we have to substitute the explicit generating function
\begin{equation}
\label{b0}
b_0 = \frac{y^2}{1-\sqrt{1-4 y^2}}\cdot
\left(1+\tfrac{2x}{1-2y+\sqrt{1-4y^2}}-
\sqrt{1+4x\cdot
\tfrac{2-y(3-x+2y) -(2-y)\sqrt{1-4 y^2}}{y(1-2y+\sqrt{1-4y^2})^2}
}
\right).
\end{equation}
\end{theorem}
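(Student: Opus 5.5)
The plan is to read the formula as a product of three combinatorial specifications, one for each factor in it, and to justify each factor through the description of full lap signatures $\fS(T_\mathbf{g})$ recalled in section \ref{Sign}.

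\textbf{Outer factor.} I expect $b=b_0/(1-b_0)$ to come from a \emph{sequence construction}. A full lap signature should factor uniquely as a nonempty concatenation of ``primitive'' blocks, each marked by $x^{r_i}y^{q_i}$. The blocks should be separated at those places where the signature returns to a canonical base state, for instance where the period map along the cyclicity set $\mathbf{C}$ reaches lap level $\ell=1$ again, or at an outermost saddle separatrix. Each centre and each periodic orbit then belongs to exactly one block. Given such a bijection, $b=\sum_{k\geq1}b_0^k$ is immediate.

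\textbf{Primitive blocks.} The main work is to derive $b_0$. I would split a primitive block into two independent layers.

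\emph{Tree layer.} The connected components of $\mathbf{C}$ nest into a planar rooted binary tree, glued along homoclinic and heteroclinic loops of the ODE-saddles. The leaves are the $r$ centres, each marked by $x$. This layer produces the square root in $x$: the tree generating function satisfies a quadratic equation of the form $F=x\cdot(\ldots)+F^2\cdot(\ldots)$.

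\emph{Edge decorations.} Each edge, that is each annular component of $\mathbf{C}$, carries the pattern of transverse crossings of the levels $T=2/\ell$ by the period map. By figure \ref{figTg}, a crossing pattern is a lattice-path object in which $\ell$ moves up and down. Crossings come in alternating up/down pairs at extremal $\ell$ and in odd runs at transient levels. Counting these decorations with weight $y$ per periodic orbit should give Catalan/Dyck-type series in $y^2$ and in $y$. This is where $\sqrt{1-4y^2}$ enters, as well as the prefactor $y^2/(1-\sqrt{1-4y^2})=\tfrac14(1+\sqrt{1-4y^2})$.

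I would therefore first compute two series:
\begin{itemize}
\item $E(y)$, counting the admissible decorations of an internal edge;
\item $R(y)$, counting those of the root edge, which meets the dissipative outer region where $T\to$ boundary behaviour is forced.
\end{itemize}
Substituting these into the quadratic tree equation should give $F^2\cdot\alpha(y)-F\cdot\beta(y)+x\cdot\gamma(y)=0$. Taking the branch with $F(0,y)=0$ should reproduce the expression under the radical, namely
\[
1+4x\,\frac{2-y(3-x+2y)-(2-y)\sqrt{1-4y^2}}{y(1-2y+\sqrt{1-4y^2})^2}.
\]
Multiplying by the root-edge factor should then give $b_0$.

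\textbf{Checks and main obstacle.} As a check, I would verify the first coefficients directly. We need $b_{10}=1$, the trivial Chafee--Infante case with $r=1$ and $q=0$, and also the small cases from table \ref{tablap}, such as $\mathrm{CI}_3$ at $r=1,q=2$, by expanding $b_0/(1-b_0)$.

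The hardest step is the second one: establishing rigorously that the lap-crossing data on each tree edge are \emph{independent} of one another, apart from boundary-matching constraints at the saddle loops. Concretely, at a saddle the period tends to $\infty$, so every edge starts or ends at lap level $\ell\to0$ from the period-map side. It is also the step of identifying exactly which crossing words are realizable, so that distinct signatures correspond bijectively to distinct Hamiltonian Sturm permutations via theorem \ref{thmHamrev}. To settle the admissible words, I would first use the parity rule of figure \ref{figTg}: odd runs at transient $\ell$ and even runs at extremal $\ell$. If the boundary matching at loops turns out to couple adjacent edges, I would absorb that coupling into modified edge series, for example by distinguishing edges by the parity of their end level, before solving the quadratic.
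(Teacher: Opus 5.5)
Your outer step is fine once the blocks are specified correctly. They are the connected components of the signature forest, split at the top-level ODE-saddles $\bullet$ between consecutive root parenthesis pairs; returns to lap level $\ell=1$ play no role. Then $b=\sum_{m\ge1}b_0^m$.

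The genuine gap is the tree layer. The nesting of cyclicity regions in the phase plane is binary for generic $\mathbf{g}$. But the object you must count is the signature tree of Definition~\ref{deftree}, and that tree is not binary. Annular regions without 2-periodic orbits carry no parentheses in $\fS$, so they are collapsed, and an annular vertex may have any number $m\ge2$ of children, as in \eqref{r=n-2}. Counting binary phase-plane trees with possibly empty edge decorations therefore overcounts: for $q=0$, every configuration with $r$ centres gives the single signature $\bullet(\odot)\bullet\cdots\bullet(\odot)\bullet$, so $b_{r0}=1$. Requiring nonempty annular decorations undercounts instead: it misses \eqref{r=n-2} with $r=3$, $q=2$.

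The correct recursion is $b_0=x\fc(y)+\fa(y)\,b_0^2/(1-b_0)$, i.e.\ $(1+\fa)b_0^2-(1+x\fc)b_0+x\fc=0$, whose linear coefficient depends on $x$. Your predicted shape $\alpha(y)F^2-\beta(y)F+x\gamma(y)=0$ has a discriminant linear in $x$. The radical in \eqref{b0} is quadratic in $x$: the term $+xy$ inside $2-y(3-x+2y)$ is exactly the $x^2\fc^2$ coming from $(1-x\fc)^2$. So your model cannot reproduce the formula, whatever $y$-series $E$ and $R$ you insert.

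The decoration series are also sorted by the wrong dichotomy. There is no special root series. The outer boundary of a root region is a separatrix loop with $T\to\infty$, like every other region, and the root vertex is simply central or annular. What matters is central versus annular:
\begin{enumerate}
\item[(a)] A central list is possibly empty, of any length $s$, with only $\ell_1=1$ at the boundary. Via $\beta_j=-\mathrm{sign}\,T_j'$ and \eqref{ellbeta} it corresponds to a $\pm1$ sequence with nonnegative partial sums. These are counted by $\binom{s}{\lfloor s/2\rfloor}$, giving $\fc(y)=2/(1-2y+\sqrt{1-4y^2})$.
\item[(b)] An annular list is nonempty, of even length, with $\ell_1=\ell_s=1$. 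It corresponds to a Dyck path, counted by $\mathfrak{C}_{s/2}$, giving $\fa(y)=\mathfrak{C}(y^2)-1$.
\end{enumerate}

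Your main worry, independence of the lap data across regions, disappears once you count abstract full lap signatures. By \eqref{Signsigma}, Theorem~\ref{thmHamrev} and the realization result recalled in section~\ref{Sign}, these are in bijection with Hamiltonian Sturm permutations. Conditions (vi)--(ix) of Definition~\ref{defSign} act on each lap-list pair separately, with $\overline{S}_k$ determined by $\underline{S}_k$. Inserting $\fa,\fc$ into the quadratic and choosing the branch with $b_0(0,y)=0$ then gives \eqref{b0}.
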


Note analyticity of the rightmost fraction, at $y\!=\!0$.
We prove theorem \ref{thmbxy} in section \ref{Pfb}.

Aiming for the generating function $a(z)$ of the Hamiltonian Sturm counts, we now rewrite the refined generating function $b(x,y)$ in terms of new labeling variables 
\begin{equation}
\label{uz}
\begin{aligned}
    x&\phantom{:}=uz\,;   \\
    y&\phantom{:}=z\,; \\
    f(u,z)&:=b(x,y)=b(uz,z)\,.
\end{aligned}
\end{equation}
This rearranges the counting coefficients of $b$ as
\begin{equation}
\label{fnr}
\begin{aligned}
    f(u,z)&=\sum_{n=1}^\infty \left(\sum_{r=1}^n f_{nr}\,u^r\right)z^n=\sum_{n=1}^\infty f_n(u)z^n\,,\quad\textrm{with}  \\
    f_{nr}&=b_{r,n-r}
\end{aligned}
\end{equation}
Here $f_{nr}$ counts the Hamiltonian Sturm permutations with a total of $2n+1$ PDE equilibria and $r\geq1$ ODE-centers.
Explicitly, this rewrites theorem \ref{thmbxy} as $f(u,z)=f_0/(1-f_0)$ with the abbreviation
\begin{equation}
\label{f0}
f_0= \frac{z^2}{1-\sqrt{1-4 z^2}}\cdot
\left(1+\tfrac{2uz}{1-2z+\sqrt{1-4z^2}}-
\sqrt{1+4u\cdot
\tfrac{2-z(3-uz+2z) -(2-z)\sqrt{1-4 z^2}}{(1-2z+\sqrt{1-4z^2})^2}
}
\right).
\end{equation}

Evaluating $f(u,z)$ in \eqref{uz}, \eqref{fnr} at $u\!=\!1$ immediately yields the generating function $a(z)$ of the attractor counts $a_n$ in \eqref{az}, \eqref{ab} as follows.
\begin{corollary}\label{coraz}
The generating function $a(z)$ of Hamiltonian Sturm permutations satisfies 
\begin{equation}
\label{aexplic}
a(z)=f(1,z)=b(z,z)\,.
\end{equation}
Explicitly, this identifies $a(z)=a_0/(1-a_0)$ with the substituted generating function
\begin{align}
\label{a0}
a_0(z)&:= \frac{z^2}{1-\sqrt{1-4 z^2}}\cdot
\left(1+\frac{2z-\sqrt{2\,\mathfrak{p}_*(z)}}{1-2z+\sqrt{1-4z^2}}\right)\,,\quad\textrm{and}
\\[2mm]
\label{fpz}
\mathfrak{p}_*(z)&:=5-3 \sqrt{1 - 4 z^2} -8z-2z^2\,.
\end{align}
\end{corollary}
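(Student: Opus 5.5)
The plan is to verify the corollary in two steps: first the identity $a(z)=f(1,z)=b(z,z)$ at the level of coefficients, and then the explicit simplification of $b_0(z,z)$ into the form \eqref{a0}, \eqref{fpz}, using theorem \ref{thmbxy}.

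\emph{First step.} By \eqref{ab} and \eqref{bxy}, grouping terms by $n=r+q$ gives
\[
b(z,z)=\sum_{q\geq0}\sum_{r\geq1} b_{rq}\,z^{r+q}=\sum_{n\geq1}\Bigl(\sum_{r=1}^n b_{r,n-r}\Bigr)z^n=\sum_{n\geq1}a_nz^n=a(z).
\]
Likewise, \eqref{uz} and \eqref{fnr} give $f(1,z)=b(z,z)$ directly. All of this holds as formal power series. It also holds as convergent series near $z=0$, because the coefficients are nonnegative counts and the right-hand side of \eqref{bb0}, \eqref{b0} is analytic there. Hence $a=a_0/(1-a_0)$ with $a_0(z):=b_0(z,z)$.

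\emph{Second step.} Abbreviate $s:=\sqrt{1-4z^2}$ and $D:=1-2z+s$, and note $D(0)=2>0$. At $x=y=z$, the radicand in \eqref{b0} becomes $1+4N/D^2$, because the factor $x$ cancels the $y$ in the denominator. Here the numerator is
\[
N:=2-z(3-z+2z)-(2-z)s=2-3z-z^2-(2-z)s.
\]
The bracket in \eqref{b0} then reads
\[
1+\frac{2z}{D}-\frac{\sqrt{D^2+4N}}{D},
\]
where pulling $D$ out of the square root is legitimate because $D>0$ near $z=0$ and we use the principal branch there. It therefore suffices to check the algebraic identity $D^2+4N=2\,\mathfrak{p}_*(z)$. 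Using $s^2=1-4z^2$,
\[
D^2=(1-2z)^2+2(1-2z)s+1-4z^2=2-4z+2(1-2z)s.
\]
Adding $4N=8-12z-4z^2-(8-4z)s$ gives
\[
D^2+4N=10-16z-4z^2-6s=2\bigl(5-3s-8z-2z^2\bigr)=2\,\mathfrak{p}_*(z).
\]
Substituting back yields exactly \eqref{a0}.

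\emph{Main point of care.} The only delicate issue is the choice of square-root branch, i.e.\ the consistency of $\sqrt{D^2+4N}=D\sqrt{1+4N/D^2}$ near $z=0$. At $z=0$ we have $\mathfrak{p}_*(0)=2$, so $\sqrt{2\mathfrak{p}_*(0)}=2=D(0)$, and both sides are positive. Analytic continuation in a small disk therefore fixes the branch. The remaining verification is the routine polynomial algebra above.
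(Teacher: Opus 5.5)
Your proposal is correct and takes the same route as the paper, which obtains the corollary simply by evaluating $f(u,z)=b(uz,z)$ at $u=1$, i.e.\ setting $x=y=z$ in theorem \ref{thmbxy}, without writing out the simplification. Your identity $D^2+4N=2\,\mathfrak{p}_*(z)$, together with the branch normalization $\sqrt{2\,\mathfrak{p}_*(0)}=2=D(0)$, correctly supplies the algebra that the paper leaves implicit.
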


The corollary suggests to normalize the counts $f_{nr}$ and consider the polynomials
\begin{equation}
\label{pnu}
\begin{aligned}
    p_n(u)&:= \sum_{r=1}^n p_{nr}\,u^r\,,\quad\textrm{with} \\
    p_{nr}&:=f_{nr}/a_n =b_{r,n-r}/a_n\,. 
\end{aligned}
\end{equation}
as generating functions of random variables $X_n$ with probabilities $\bP(X_n\!=\!r):=p_{nr}$\,.
As before, $f_{nr}\geq0$ count Hamiltonian Sturm permutations $\sigma$ with $r$ centers and $q\!=\!n\!-\!r$ pairs $\underline{\mathbf{v}},\ \overline{\mathbf{v}}$ of non-homogeneous equilibria.
The $p_{nr}$ normalize $f_{nr}$ by the sum $a_n=\sum_{r}f_{nr}$ to qualify as probabilities.
The probabilities $p_{nr}$ indicate the relative frequencies of full lap signatures $\cS$ with $r$ ODE-centers among those with $|\cE|\!=\!N\!=\!2n\!+\!1$ PDE equilibria.
We define means and variances, as usual:
\begin{equation}
\label{mu}
\begin{aligned}
    \boldsymbol{\mu}_n&:=\sum_{r=1}^n\, r\,p_{nr}\quad\textrm{and}   \\
    \boldsymbol{\sigma}_n^2&:= \sum_{r=1}^n\, (r-\boldsymbol{\mu}_n)^2\,\,p_{nr}\,,
\end{aligned}
\end{equation}
By $a_{0n},\  b_{0,rn},\  f_{0,nr},\ p_{0,nr},\  X_{0n},\  \boldsymbol{\mu}_{0n},\  \boldsymbol{\sigma}_{0n}$ we denote the corresponding quantities, based on the explicit generating functions $b_0(x,y),\  f_0(u,z),\ a_0(z)$ from \eqref{b0}, \eqref{f0}, \eqref{a0}, instead.
In section \ref{Pfb} it will turn out how all these quantities refer to the counting of irreducible Hamiltonian Sturm permutations, alias irreducible full lap signatures, alias signature trees.

Under appropriate normalization by means and variances, we obtain the following Gaussian limit theorem.

\begin{theorem}\label{thmGauss}
For $n\!\rightarrow\!\infty$, the normalized random variables 
\begin{align}
\label{Y0n}
Y_{0n}&:=(X_{0n}-\boldsymbol{\mu}_{0n})/\boldsymbol{\sigma}_{0n}\quad\textrm{and}\\
\label{Yn}
Y_n&:=(X_n-\boldsymbol{\mu}_n)/\boldsymbol{\sigma}_n
\end{align}
converge to the standard Gaussian.
Convergence of the distribution functions $\bP(Y_{0n}\leq y)$ and $\bP(Y_n\leq y)$ is uniform in $y$, of order $\cO(1/\sqrt{n})$.
\end{theorem}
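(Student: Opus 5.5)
The plan is to use the quasi-powers framework of Flajolet and Sedgewick (Hwang's quasi-powers theorem, in the form of their theorem IX.8 and the algebraic-singularity perturbation scheme of section IX.7). The bivariate generating functions $f_0(u,z)$ from \eqref{f0} and $f(u,z)=f_0/(1-f_0)$ from theorem \ref{thmbxy} are explicit. We study them for $u$ in a small complex neighborhood $U$ of $u=1$. The goal is to show that, uniformly for $u\in U$, the coefficients satisfy
\[
[z^n]\,F(u,z)=C(u)\,\rho(u)^{-n}\,n^{-3/2}\,\bigl(1+\cO(1/n)\bigr),
\]
for $F=f_0$ and for $F=f$, with $C,\rho$ analytic on $U$ and $C(1),\rho(1)\neq0$. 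Then
\[
p_{0n}(u)=\frac{[z^n]f_0(u,z)}{[z^n]f_0(1,z)}=\frac{C(u)}{C(1)}\left(\frac{\rho(1)}{\rho(u)}\right)^{n}\bigl(1+\cO(1/n)\bigr),
\]
and the same holds for $p_n(u)$. This is exactly the quasi-powers form with $\beta_n=n$ and $\kappa_n=n$. The quasi-powers theorem then gives a Gaussian limit with Berry--Esseen speed $\cO(1/\sqrt n)$ for both $Y_{0n}$ and $Y_n$. It also gives mean and variance asymptotically linear in $n$, with $\cO(1)$ corrections.

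\emph{Key steps, in order.}

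\emph{Step 1: the dominant singularity $\rho(u)$ of $f_0(u,\cdot)$.} In \eqref{f0} the candidate singularities are the branch points $z=\pm1/2$ of $\sqrt{1-4z^2}$, zeros of the denominator $1-2z+\sqrt{1-4z^2}$, and zeros of the radicand
\[
R(u,z)=1+4u\cdot\frac{2-z(3-uz+2z)-(2-z)\sqrt{1-4z^2}}{(1-2z+\sqrt{1-4z^2})^2}.
\]
At $u=1$ this radicand equals $2\mathfrak{p}_*(z)/(1-2z+\sqrt{1-4z^2})^2$, by corollary \ref{coraz}. So $\rho(1)$ is the smallest positive root of $\mathfrak{p}_*(z)$, and I expect it to lie strictly inside $|z|<1/2$. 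I would verify that this root $\rho_*$ is simple and that $\mathfrak{p}_*$ has no other zeros on $|z|=\rho_*$. Both checks reduce to rationalizing $\mathfrak{p}_*=0$ into a polynomial equation. Positivity of coefficients then gives aperiodicity. By the implicit function theorem, $R(u,\rho(u))=0$ defines an analytic $\rho(u)$ near $u=1$, provided $\partial_zR(1,\rho_*)\neq0$.

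\emph{Step 2: the square-root singularity is uniform in $u$.} Near $z=\rho(u)$,
\[
f_0(u,z)=A(u,z)-B(u,z)\sqrt{1-z/\rho(u)},
\]
with $A,B$ analytic and $B(1,\rho_*)\neq0$. The transfer theorems of Flajolet--Sedgewick, applied uniformly in the parameter $u$, then give the $n^{-3/2}\rho(u)^{-n}$ asymptotics for $f_0$. Uniform $\Delta$-analyticity in $u$ follows because the other singularities stay at distance bounded away from $\rho(u)$ as $u$ varies in $U$.

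\emph{Step 3: passing from $f_0$ to $f=f_0/(1-f_0)$.} This is the schema that needs care: it could be supercritical, with a pole where $f_0=1$, or critical. I would check numerically, and then rigorously, that $f_0(1,\rho_*)<1$. Then $1/(1-f_0)$ is analytic near $z=\rho(u)$ and nonzero there, so $f$ inherits the same $\rho(u)$ and the same square-root type with a modified analytic factor. If instead $f_0(1,\rho_*)>1$, then $f$ is supercritical. In that case I would switch to the simple-pole variant: let $\tau(u)$ solve $f_0(u,\tau(u))=1$, and use quasi-powers with $\tau(u)^{-n}$, for which the Gaussian law and the $\cO(1/\sqrt n)$ rate still follow.

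\emph{Step 4: variability.} The quasi-powers theorem requires the variability condition
\[
\mathfrak{v}=\frac{\rho''(1)}{\rho(1)}+\frac{\rho'(1)}{\rho(1)}-\left(\frac{\rho'(1)}{\rho(1)}\right)^{2}\neq0,
\]
and the analogous condition with $\tau$ in place of $\rho$ in the supercritical case. This has to be verified by implicit differentiation of $R(u,\rho(u))=0$ at $u=1$. Since the variance is then linear in $n$, the distribution cannot degenerate.

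\emph{Main obstacle.} I expect Steps 1 and 4 to be the hardest. One must exclude competing singularities on the circle $|z|=\rho_*$, coming from the nested square roots and branch choices. One must also establish $\mathfrak{v}\neq0$ with explicit algebraic numbers. My first attempt would be to eliminate $\sqrt{1-4z^2}$ via the substitution $z=s/(1+s^2)$, under which $\sqrt{1-4z^2}=(1-s^2)/(1+s^2)$. This turns $R$ and $\mathfrak{p}_*$ into rational functions of $s$. Then $\rho(u)$ becomes a root of a polynomial in $(u,s)$, and the derivatives $\rho'(1)$ and $\rho''(1)$ can be computed exactly and certified nonzero.
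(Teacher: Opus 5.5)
Your proposal is correct and follows the paper's proof essentially step for step. Both use the square-root decomposition of $f_0(u,z)$ around the movable simple root $\zeta(u)$ of the radicand $\mathfrak{p}(u,z)$, which is Flajolet--Sedgewick's algebraic-singularity schema (theorem IX.12), fed into the quasi-powers theorem IX.8 with $\beta_n=\kappa_n=n$; then a subcritical passage to $f=f_0/(1-f_0)$ using $a_0(\zeta)\approx 0.679<1$, so that only the prefactor changes to $B_0/(1-a_0)^2$; and finally a check that the analytic variance does not vanish. The differences are cosmetic: your variability expression is the negative of $\mathfrak{v}(\zeta(1)/\zeta(u))$, which is harmless for a condition of the form $\neq0$, and you propose to certify the root, the subcriticality (together with $|f_0(1,z)|\le f_0(1,|z|)$ on the closed disk) and $\mathfrak{v}\neq0$ exactly by rationalization, where the paper relies on numerical evaluation.
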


Theorem \ref{thmGauss} is proved in section \ref{Gauss}.
In particular, this implies an asymptotically Gaussian shape of the normalized relative frequencies
\begin{equation}
\label{b/a}
p_{nr}:=b_{rq}\,/\,a_n\,,\qquad \textrm{for } n=r+q\rightarrow\infty\,.
\end{equation}
The following asymptotics for means $\boldsymbol{\mu}_n$ and variances $\boldsymbol{\sigma}_n^2$ are also proved in section \ref{Gauss}.
The results are based on detailed asymptotics of $a_n$\,, developed in section \ref{Asym}.
Both theorems crucially rely on the analytic combinatorics of \cite{flajolet}.

\begin{theorem}\label{thmbasy}
For means and variances in the limit  $n\rightarrow\infty$, we obtain the asymptotics
\begin{align}
\label{muas}
    \boldsymbol{\mu}_n&=\mathfrak{m}_1\,n+\mathfrak{m}_0+\cO_{\mu}(\tfrac{1}{n})\,;  \\
\label{varas}
    \boldsymbol{\sigma}_n^2&=\,\mathfrak{v}_1\,n\,+\,\mathfrak{v}_0+\cO_{\sigma^2}(\tfrac{1}{n})\,.
\end{align}
Numerical values for the universal coefficients are
\begin{align}
\label{m1num}
\mathfrak{m}_1 &= 0.367\,986\,760\,847\,991 \ldots\,;\\ 
\label{m0num}
\mathfrak{m}_0 &= 0.566\,906\,944\,822\,056 \ldots\,;   \\
\label{v1num}
\mathfrak{v}_1\, &= 0.186\,798\,320\,509\,941 \ldots\,;    \\ 
\label{v0num}
\mathfrak{v}_0\, &= 0.282\,135\,612\,383\,251\ldots\,.   
\end{align}
For $\boldsymbol{\mu}_{0n}$ and $\boldsymbol{\sigma}_{0n}$ we obtain analogous asymptotics.
Their leading terms $\mathfrak{m}_{0,1}=\mathfrak{m}_1$ and $\mathfrak{v}_{0,1}=\mathfrak{v}_1$ coincide with \eqref{m1num} and \eqref{v1num}.
The constant terms are modified to become
\begin{align}
\label{m00num}
\mathfrak{m}_{0,0} &= -0.264\,839\,040\,868\,747\ldots\,;   \\
\label{v00num}
\mathfrak{v}_{0,0}\, &= -0.039\,686\,978\,502\,604\ldots\,.   
\end{align}
\end{theorem}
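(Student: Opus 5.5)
The plan is to read means and variances off the bivariate generating function $f(u,z)=f_0/(1-f_0)$ of \eqref{uz}, \eqref{f0}, treating $u$ as a perturbation parameter near $u=1$. By \eqref{pnu},
\[
\boldsymbol{\mu}_n=\frac{[z^n]\,\partial_uf(1,z)}{[z^n]\,f(1,z)}\,,\qquad
\boldsymbol{\sigma}_n^2=\frac{[z^n]\,\partial_u^2f(1,z)}{[z^n]\,f(1,z)}+\boldsymbol{\mu}_n-\boldsymbol{\mu}_n^2\,.
\]
So it suffices to obtain asymptotic expansions of $[z^n]$ of $f$, $f_u$ and $f_{uu}$ at $u=1$, each to relative accuracy $\cO(1/n^2)$. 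The same procedure with $f_0$ in place of $f$ handles $\boldsymbol{\mu}_{0n}$ and $\boldsymbol{\sigma}_{0n}$.

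\textbf{Step 1: locate the dominant singularity.} From section \ref{Asym} and theorem \ref{thmasa} I take the dominant singularity $\rho=\rho(1)$ of $a(z)=f(1,z)$ and its type. It is either a simple pole, where $a_0(\rho)=1$, or a square-root singularity, coming from $\sqrt{\mathfrak p_*}$ or $\sqrt{1-4z^2}$ at $z=1/2$. The variance being linear in $n$ and the Gaussian law of theorem \ref{thmGauss} both point to a \emph{movable} singularity $\rho(u)$ depending analytically on $u$ near $u=1$. I would therefore work in the quasi-powers framework of Flajolet--Sedgewick, in either of two forms:
\begin{itemize}
\item supercritical sequence schema, with $f=f_0/(1-f_0)$ and $f_0(u,\rho(u))=1$ defining $\rho(u)$ via the implicit function theorem, and $\partial_zf_0\neq0$;
\item a moving square-root singularity of $f_0$, where the radicand in \eqref{f0} vanishes at $z=\rho_0(u)$.
\end{itemize}
In both cases $[z^n]f(u,z)=C(u)\,\rho(u)^{-n}n^{\alpha}\bigl(1+c_1(u)/n+\cO(1/n^2)\bigr)$, uniformly for $u$ near 1. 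For $f_0$ the singularity type must be the square-root one, with $\alpha=-3/2$; this difference is what should produce different constant terms but equal leading terms.

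\textbf{Step 2: take logarithmic derivatives.} Setting $A(u)=-\log(\rho(u)/\rho(1))$, the log of the moment generating function is
\[
\log p_n(u)=n\,A(u)+\log\bigl(C(u)/C(1)\bigr)+\log\frac{1+c_1(u)/n}{1+c_1(1)/n}+\cO(1/n^2).
\]
Differentiating once and twice at $u=1$, with $\partial_u$ for the mean and $(u\partial_u)^2$ for the variance, gives
\[
\mathfrak m_1=-\frac{\rho'(1)}{\rho(1)}\,,\qquad \mathfrak m_0=\frac{C'(1)}{C(1)}\,,
\]
and $\mathfrak v_1,\mathfrak v_0$ from the corresponding second-order expressions. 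The $\cO(1/n)$ remainders come from $c_1'(u)/n$. Interchanging differentiation in $u$ with the asymptotic expansion is justified by uniformity in a complex $u$-neighborhood together with Cauchy estimates.

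\textbf{Step 3: evaluate the constants.} The numerical values \eqref{m1num}--\eqref{v00num} then follow by implicitly differentiating the defining equation of $\rho(u)$, either $f_0(u,\rho)=1$ or radicand$\,=0$, and the local expansion coefficients. The leading terms agree for $f$ and $f_0$ precisely when both share the same $\rho(u)$, which should happen in the square-root case since $1-f_0$ does not vanish first. Otherwise I would have to revisit Step 1.

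\textbf{Main obstacle.} I expect the hardest part to be Step 1: deciding which singularity dominates, uniformly for $u$ near 1. This means checking that $\rho(u)$ is the unique singularity on $|z|=\rho(u)$, in particular that $z=-\rho$ is excluded despite the $z^2$ dependence in $\sqrt{1-4z^2}$. It also requires the variability condition $\mathfrak v_1\neq0$. Beyond that, extracting the second-order coefficients $c_1(u)$ needed for $\mathfrak m_0$ and $\mathfrak v_0$ is messy but routine.
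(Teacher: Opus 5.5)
Your plan is essentially the paper's proof. The paper runs the same quasi-powers argument (Flajolet--Sedgewick, Thm.~IX.8) on the movable square-root singularity $\zeta(u)$, defined by the vanishing radicand $\mathfrak{p}(u,\zeta(u))=0$, and obtains $\mathfrak m_1=-\zeta'(1)/\zeta(1)$, $\mathfrak m_0=\psi'(1)/\psi(1)$, together with the analytic-variance functionals of $\zeta(1)/\zeta(u)$ and $\psi(u)/\psi(1)$.

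Your Step~1 hedge is already settled by theorem~\ref{thmasa} and \eqref{a0<1}: since $a_0(\zeta)<1$, the composition $f_0/(1-f_0)$ is subcritical. Hence $f$ and $f_0$ share $\zeta(u)$ and the $n^{-3/2}$ law, and the constant terms differ only through $\psi=\tilde B_0/(1-A_0)^2$ versus $\psi_0=\tilde B_0$, not through a change of singularity type. Also, the correction $c_1(u)$ is never needed for $\mathfrak m_0,\mathfrak v_0$.
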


See section \ref{Num} for some numerical illustrations of the asymptotics of the Hamiltonian Sturm counts $a_n$ from theorem \ref{thmasa}, and of the Gaussian asymptotics of the relative frequencies $p_{nr}$ from theorems \ref{thmGauss}, \ref{thmbasy}.

As we have announced before, the following section \ref{Back} comments on the PDE background of our counting problem.
Section \ref{Sign} then presents the classification of Hamiltonian Sturm permutations $\sigma$ by full lap signatures; see definition \ref{defSign}.
Borrowing from \cite{fiedlertatra,firo96,firowo12b,rofiln}, we discuss the 21 cases $n=1,2,3,4$ in section \ref{Ex}; see table \ref{tablap}.

In section \ref{Per}, we conclude with the case of periodic boundary conditions $\mathbf{x}\!\in\!\bS^1\!=\!\bR/2\bZ$.
Freezing rotating waves, we proceed analogously to the Neumann case, except that ODE periodic orbits give rise to $q$ frozen PDE waves, instead of $q$ spatially non-homogeneous Neumann pairs \eqref{vpair} of equilibria.
The combinatorial analysis of the spatially periodic case extends to more general spatially homogeneous nonlinearities $\mathbf{g}=\mathbf{g}(\mathbf{u},\mathbf{u_x})$.
Under Neumann boundary conditions, we additionally require spatial reversibility, i.e.\ $\mathbf{g}(\mathbf{u},-\mathbf{p})=\mathbf{g}(\mathbf{u},\mathbf{p})$ for all $\mathbf{u},\mathbf{p}$, in order to ensure Hamiltonian Sturm permutations $\sigma_\mathbf{g}$\,.

\section{Background: Sturm permutations and global attractors}\label{Back}

To survey the PDE background of our counting problem, we slightly generalize the reaction-diffusion PDE \eqref{gu} to
\begin{equation}
\label{gxuux}
\mathbf{u_t} = \mathbf{u_{xx}} + \mathbf{g}(\mathbf{x},\mathbf{u},\mathbf{u_x})\,,
\end{equation}
which admits  explicit spatial dependence $\mathbf{x}$ and nonlinear advection $\mathbf{u_x}$\,.
For now, we stick with Neumann boundary condition \eqref{Nbc}.
We assume the nonlinearity $\mathbf{g}{:}\ [0,1]\times\bR^2\rightarrow \bR$ to be $C^2$-smooth, for simplicity. 
Then the initial value problem \eqref{gxuux} generates a local semiflow of solutions, e.g. on suitable fractional Sobolev spaces 
$\mathbf{u}(\mathbf{t},\cdot)\!\in\! \mathbf{X}\!\subset\! H^s\!\hookrightarrow\! C^1,\ 3/2\!<\!s\!<\!2$, with Neumann boundary.
See for example \cite{hen81,paz83}, and further comments in section \ref{Back}.
In section \ref{Per} we will also discuss the analogous setting for periodic boundary conditions \eqref{pbc}.

\emph{Dissipativity} of the nonlinearity $\mathbf{g}$ still requires any solution to enter and remain in a fixed large ball in $\mathbf{X}$, for all sufficiently large times.
Sufficient conditions for dissipativity, which we impose throughout, are 
\begin{equation}
\label{diss}
\mathbf{g}(\mathbf{x},\mathbf{u},0)\cdot\mathbf{u}<0\quad \textrm{and} \quad |\mathbf{g}(\mathbf{x},\mathbf{u},\mathbf{p})|\leq C(|\mathbf{u}|)(1+\mathbf{p}^2)\,,
\end{equation}
for all large $|\mathbf{u}|$ and some continuous function $C(|\mathbf{u}|)$; see \cite{mana97}.
Compare also \eqref{disign}.

Under dissipativity, our previous general remarks on compact global attractors $\cA$ apply.
Surprisingly, even the gradient-like structure persists for suitable Lyapunov functionals $V$.
This was originally observed by Zelenyak \cite{zele}; see also \cite{mat88,firaro,lafi,labe} for further generalizations.

As before, we assume spectral nondegeneracy of $\mathbf{g}(\mathbf{x},\mathbf{u},\mathbf{u_x})$, i.e. we assume all equilibria $\mathbf{v}$ to be \emph{hyperbolic}.
The global attractor $\cA$ still consists of an odd number of $N\!=\!2n+1$ equilibria $\mathbf{v}\!\in\!\cE$, and of heteroclinic orbits $\mathbf{u}(\mathbf{t},\cdot)\!\in\!\cH$ as in \eqref{het}, between them.
Again, this associates a connection graph $\cC$ to PDE \eqref{gxuux}.

Since we consider dissipative nonlinearities $\mathbf{g}$ with hyperbolic equilibria and various dependencies on $\mathbf{x},\mathbf{u},\mathbf{u_x}$\,, we introduce the following notation for such classes of $\mathbf{g}$. 
Superscripts $\cN$ refer to Neumann  boundary conditions on $\mathbf{x}\!\in\![0,1]$.
We defer the quite analogous case $\cP$ of periodic boundary conditions \eqref{pbc} to our concluding section \ref{Per}.
Section \ref{Intro}, theorem \ref{thmbxy}, corollary \ref{coraz}, and theorem \ref{thmGauss} were based on the \emph{Hamiltonian class} $\mathbf{g}\!=\!\mathbf{g}(\mathbf{u})$, which we denote as
\begin{equation}
\label{SHN}
\mathbf{g}\in\SHN\,.
\end{equation}
The present section has been set up to address
\begin{equation}
\label{SxN}
\mathbf{g}\in\SxN\,.
\end{equation}
In later sections we will also classes of nonlinearities like
\begin{align}
\label{SNchain}
   \mathbf{g}\in \SH\subset\SO&\subset\SSON\subset\SxN\,,\quad\textrm{and}  \\
\label{SPchain}
   \mathbf{g}\in \SH\subset\SO&\subset\SSOP\subset\SxP\,.
\end{align}
The notation $\mathbf{u}_\mathbf{x}^2$ indicates that $\mathbf{g}(\mathbf{u},\mathbf{p})$ is assumed to be even in $\mathbf{p}$.
The two leftmost classes of nonlinearities, it turns out, do not depend on the boundary conditions.
We have therefore omitted the superscripts $\cN,\cP$ there.

To emphasize dependence on nonlinearities and boundary conditions for equilibria, heteroclinic orbits, global attractors, and connection graphs, we will also use self-explanatory notation like
\begin{align}
\label{EgN}
    &\cEgN\,,\ \cHgN\,,\ \cAgN\,,\ \cCgN\,,\ \quad\textrm{and}  \\
\label{EgP}
    &\cEgP\,,\ \cHgP\,,\ \cAgP\,,\ \cCgP\,.
\end{align}

We now address global attractors $\cA\!=\!\cAgN$ for PDE \eqref{gxuux}, i.e. for $\mathbf{g}\!\in\!\SxN$, more specifically.
It was a main result of \cite{firo96,firo00}, how the PDE connection graph $\cC\!=\!\cCgN$ is determined, explicitly and constructively, by the Sturm permutation $\sigma$ of the boundary ordering \eqref{sigma0}, \eqref{sigma1} of the $N\!=\!2n+1$ equilibria $\mathbf{v}_1,\ldots,\mathbf{v}_{2n+1}\in\cE$.
This includes non-Hamiltonian nonlinearities $\mathbf{g}$.
Still, the (not necessarily Hamiltonian) Sturm permutations $\sigma$ only require input from the equilibrium ODE \eqref{gv}.

A special case arises, when we just concatenate two Sturm permutations $\sigma_1$ and $\sigma_2$ at a shared index $N_*$\,.
In other words, we require the restrictions $\sigma_1$ and $\sigma_2$ of the joint Sturm permutation $\sigma$ to $\{1,\ldots,N_*\}$ and to $\{N_*,\ldots,N\}$ with $N:=2n+1$, respectively, to be Sturm.
In particular, $N_*=2n_*+1\neq0,N$ needs to be odd.
This amounts to stacking the associated Sturm global attractors $\cA_1$ and $\cA_2$ on top of each other, glued at the shared, and separating, equilibrium 
\begin{equation}
\label{stack}
\cA_1\leq\mathbf{v}_{N_*}\leq\cA_2\,.
\end{equation}
Here each attractor, separately,  would be depicted in the style of figure \ref{figCI3}.
Several examples of such ``gluing'' were described in \cite{fiedlertatra}; see also ``stacking'' in \cite{firo09} and in section 2.3 of \cite{firo10}.
If such stacking does not occur, we call $\sigma$ an  \emph{irreducible Sturm permutation}.

The perhaps most powerful PDE feature of \eqref{gxuux} is the dropping behavior of the \emph{zero number} 
$\mathbf{z}:\ \mathbf{X}\!\setminus\!\{0\}\!\rightarrow\!\{0,1,2,\ldots\}$\,.
Specifically, $\mathbf{z}(\mathbf{w})$ counts the number of strict sign changes of $\mathbf{x}\!\mapsto\!\mathbf{w}(\mathbf{x})\!\in\!\mathbf{X}\!\setminus\!\{0\}\!\subset\!C^1$.
Let $\mathbf{w}(\mathbf{t},\mathbf{x}):=\mathbf{u}_2(\mathbf{t}\mathbf{,}x)\!-\!\mathbf{u}_1(\mathbf{t},\mathbf{x})$ denote the difference of any two nonidentical solutions $\mathbf{u}_1,\,\mathbf{u}_2$ of \eqref{gxuux}. 
Then 
\begin{equation} 
\label{zdrop}
\mathbf{t} \mapsto \mathbf{z}(\mathbf{w}(\mathbf{t},\cdot)) \searrow
\end{equation} 
is nonincreasing with $\mathbf{t}$.
More precisely, $\mathbf{z}$ becomes finite, as soon as $\mathbf{t}>0$, and drops strictly at any $\mathbf{t}_0>0$ for which $\mathbf{x}\mapsto \mathbf{w}(\mathbf{t}_0,\mathbf{x})$ possesses a multiple zero $\mathbf{w}\!=\!\mathbf{w_x}\!=\!0$ at some $\mathbf{x}\!=\!\mathbf{x}_0\!\in\!\ [0,1]$.
See \cite{ang88, mat82}, with the original idea going back to Charles-François Sturm \cite{sturm} in 1836.
This is why we attach the name of \emph{Sturm} to the PDEs \eqref{gu} or \eqref{gxuux}, their boundary permutations $\sigma$, global attractors $\cA$, and connection graphs $\cC$.

Most importantly, zero number dropping \eqref{zdrop} implies \emph{automatic transversality} of stable and unstable manifolds
\begin{equation}
\label{transv}
\mathbf{W^u}(\mathbf{v}_-)\transv \mathbf{W^s}(\mathbf{v}_+)\,,
\end{equation}
for any pair of hyperbolic equilibria $\mathbf{v}_\pm\in\cE$.
See \cite{hen85, czro08, firowo04, jora10a, roc25}.

As a first consequence of transversality, the heteroclinicity relation $\mathbf{v}_-\leadsto \mathbf{v}_+\,$ is transitive.
This follows from the celebrated $\lambda$-Lemma \cite{PalisdeMelo}.
In the transitive connection graph $\cCgN$, we may therefore omit heteroclinic edges which arise from concatenations of other heteroclinic edges.
Then any directed path from $\mathbf{v}_1$ to $\mathbf{v}_2$ also implies a direct heteroclinic orbit $\mathbf{v}_1\leadsto \mathbf{v}_2$.
Given $\mathbf{v}_1\leadsto \mathbf{v}_2$\,, conversely, the cascading principle first described in \cite{brfi89} asserts an interpolating sequence of heteroclinic orbits between equilibria of adjacent Morse indices $i$, from $\mathbf{v}_1$ to $\mathbf{v}_2$\,. 
Therefore, all connection graphs $\cCgN$ are graded by the Morse indices $i$, and all heteroclinic edges run between adjacent Morse levels, downward.

As a second consequence of transversality \eqref{transv}, hyperbolicity of equilibria, alone, already identifies the dynamics on any Sturm global attractor $\cA$ as a Morse-Smale system.
In particular, this implies local $C^0$  structural stability.
See \cite{Palis, PalisSmale, PalisdeMelo,hamaol02} for a general background on transversality and Morse-Smale systems, including infinite-dimensional settings.
Perhaps even more surprisingly, the decomposition 
\begin{equation}
\label{AWu}
\cA=\bigcup_{\mathbf{v}\in\cE}\ \mathbf{W^u}(\mathbf{v}))
\end{equation}
into unstable manifolds identifies the global attractor $\cA$ as a signed regular cell complex \cite{firo22,rofi24}.

Under Neumann boundary conditions, the mere ODE Sturm permutation $\sigma\!=\!\sigma_\mathbf{g}$ of any nonlinearity $\mathbf{g}\!\in\!\SxN$ in fact determines the associated PDE global attractor $\cA\!=\!\cA_\mathbf{g}^\cN$, up to $C^0$ orbit equivalence. 
In particular, this also determines the  associated connection graph $\cC\!=\!\cC_\mathbf{g}^\cN$, up to orientation preserving graph isomorphism.
The main reason is that $\sigma_\mathbf{g}$ determines all PDE Morse indices $i_j\!=\!i(\mathbf{v_j})$ and all zero numbers $z_{jk}\!=\!z(\mathbf{v}_j-\mathbf{v}_k)$, by explicit formulas.
See \cite{firo96, firo00} for details and figure \ref{figCI3} for illustration.

The purely combinatorial characterization of all $\sigma_\mathbf{g}$ identifies the Sturm permutations to be dissipative Morse meanders \cite{firo99}.
Here \emph{dissipative}, first, means that the permutation $\sigma_\mathbf{g}\!\in\! S_{2n+1}$ fixes the lowest entry $1$ and the highest entry $2n+1$.
Second, the \emph{Morse} property requires $i_j\geq0$, for all formally associated ``Morse'' numbers $i_j$\,.
\emph{Meanders} $\cM$ are $C^1$ planar Jordan curves which cross the horizontal axis transversely at a finite number of points, emanating from SouthWest and leaving towards NorthEast, eventually.
The \emph{meander} property, third, then requires $\sigma_\mathbf{g}$ to originate from the different enumerations of the crossing points, along the meander and along the horizontal axis.

Necessity of these three conditions is obvious, by dissipativity of $\mathbf{g}$ and because the formal ``Morse'' numbers $i_j$ become true Morse indices of equilibria.
The meander property follows, if we define $\cM\!=\!\cM_\mathbf{g}$ to be the image of the horizontal $\mathbf{v}$-axis at ``time'' $\mathbf{x}\!=\!1$ under the ODE-flow \eqref{gv}.
See figures \ref{figCI3}(a) and (b).

Comparing the counts of ODE Sturm permutations $\sigma_\mathbf{g}$ and of PDE global attractors $\cAgN$ or their connection graphs $\cCgN$, however, encounters subtle difficulties.
Although we have a map $\sigma_\mathbf{g}\mapsto\cAgN$, that map may not be injective.
For example, let 
\begin{equation}
\label{-u}
\mathbf{f}(\mathbf{x},\mathbf{u},\mathbf{p}):=-\mathbf{g}(\mathbf{x},-\mathbf{u},-\mathbf{p})\,.
\end{equation}
Then $\mathbf{u}(\mathbf{t},\mathbf{x})$ solves PDE \eqref{gxuux} if, and only if, $-\mathbf{u}(\mathbf{t},\mathbf{x})$ satisfies the same PDE with nonlinearity $\mathbf{f}$ instead of $\mathbf{g}$.
In particular, the global attractors 
\begin{equation}
\label{-A}
\mathcal{A}_\mathbf{f}^\mathcal{N}\ =\ -\cAgN\ \cong\ \cAgN
\end{equation}
are $C^0$ orbit equivalent, and even linearly flow equivalent.
On the other hand, consider the involutive permutation $\kappa(j):=2n+2-j$ in $S_{2n+1}$\,.
By definition \eqref{sigma0}, \eqref{sigma1}, the Sturm permutations $\sigma_\mathbf{f}$ and $\sigma_\mathbf{g}$ are conjugate under the involution $\kappa$\,:
\begin{equation}
\label{sigmakappa}
\sigma_\mathbf{f}=\kappa\,\sigma_\mathbf{g}\,\kappa\,,
\end{equation}
frequently with $\sigma_\mathbf{f}\neq\sigma_\mathbf{g}$\,.
The Duffing example of figure \ref{figCI3}, in contrast, satisfies $\mathbf{f}\!=\!\mathbf{g}$ and therefore $\sigma_\mathbf{f}\!=\!\sigma_\mathbf{g}$\,.

Passing to $\mathbf{u}(\mathbf{t},1-\mathbf{x})$, we encounter a similar example under reversal of $\mathbf{x}$:
\begin{align}
\label{urev}
\mathbf{f}(\mathbf{x},\mathbf{u},\mathbf{p})&:=\mathbf{g}(1-\mathbf{x},\mathbf{u},-\mathbf{p})\,;\\
\label{Arev}
\mathcal{A}_\mathbf{f}^\mathcal{N}&\phantom{:}\cong\cAgN\,;\\
\label{sigmarev}
\sigma_\mathbf{f}&\phantom{:}=\sigma_\mathbf{g}^{-1}\,.
\end{align}
Although $\sigma_\mathbf{f}\neq\sigma_\mathbf{g}$, in general, we note that the reversible Sturm class $\mathbf{g}\!\in\!\SON\!\supset\!\SHN$ satisfies $\mathbf{f}\!=\!\mathbf{g}$ and therefore $\sigma_\mathbf{g}\!=\!\sigma_\mathbf{f}\!=\!\sigma_\mathbf{g}^{-1}$ is involutive.

Of course we may also achieve the inverse conjugate $\kappa\,\sigma_\mathbf{g}^{-1}\,\kappa$.
In (6.7)--(6.9) of \cite{firo96}, we have called these phenomena \emph{trivial equivalences} of global attractors (and of connection graphs).
Less trivial subtleties of non-injectivity might involve nonlinear $C^0$ orbit equivalences.  
On the positive side, however, we have introduced the signed Thom-Smale complex of Sturm global attractors, in \cite{firo22}.
This is a finer classification than $C^0$ orbit equivalence.
In particular, we have concluded in the last paragraph of \cite{firo22}, how these Sturmian signed regular cell complexes are in 1-1 correspondence with the Sturm permutations $\sigma$, and are therefore counted correctly by our present results.

To avoid such subtleties, in the present paper, we just aim for Sturm permutations.
For general nonlinearities $\mathbf{g}\!\in\!\SxN$ with a prescribed number $N\!=\!2n+1$ of equilibria, however, even just counting Sturm permutations $\sigma\!=\!\sigma_\mathbf{g}$ remains an elusive task.
For an enumerative table up to $n\!=\!20$ see \cite{wol17}. 
We therefore return to the more restrictive class of reversible Sturm nonlinearities $\mathbf{g}\!\in\!\SON\!\supset\!\SHN$ and their Hamiltonian Sturm permutations $\sigma_{\mathbf{g}}$\,, next.

\section{Period maps and full lap signatures}\label{Sign}

In this section we study spatially reversible PDEs
\begin{equation}
\label{guux}
\mathbf{u_t} = \mathbf{u_{xx}} + \mathbf{g}(\mathbf{u},\mathbf{u_x})
\end{equation}
with Neumann boundary. 
Spatial reversibility requires $\mathbf{g}$ to be even in the gradient $\mathbf{p}$:
\begin{equation}
\label{grev}
\mathbf{g}(\mathbf{u},\mathbf{p})=\mathbf{g}(\mathbf{u},-\mathbf{p})\,;\\
\end{equation}
compare \eqref{urev}.
In shorthand \eqref{SNchain}, we generalize section \ref{Intro} and study $\mathbf{g}\!\in\!\SON\!\supset\!\SHN$.
From \cite{rofiln}, we will recall the properties of period maps $T\!=\!T_\mathbf{g}$\,, of their associated full lap signatures $\fS\!=\!\fS_\mathbf{g}$\,, and of Hamiltonian Sturm permutations $\sigma_\mathbf{g}$\,.

The concept of period maps $T\!=\!T_\mathbf{g}$ carries over from the Hamiltonian case $\mathbf{g}\!\in\!\SHN$ to the reversible case $\mathbf{g}\!\in\!\SON$, verbatim, except for the lack of an explicit Hamilton function $\mathbf{H}$.
See \eqref{Tell}--\eqref{Tmap}, as well as figures \ref{figTg} and \ref{figCI3}(c).

Following section 4.3 of \cite{rofiln}, we introduce the notion of a full lap signature $\fS$, independently of period maps.
\begin{definition}\label{defSign}
Let $\fS$ be a finite sequence of symbols $\bullet,\odot,(,)$, and a collection of pairs $\underline{S}_k,\,\overline{S}_k$ of finite ordered \emph{lap lists} $S=\{\ell_1,\ldots,\ell_{s_k}\}$ of positive integers $\ell_j\!\in\!\bN$.
We call $\fS$ a \emph{full lap signature}, if the following nine properties (i)--(ix) all hold.
We first collect the five properties of symbols $\bullet,\odot,(,)$.
\begin{enumerate}[(i)]
	\item The sequence starts and ends with the symbol $\bullet$\,.
	\item The parenthesis symbols \emph{``(''} and \emph{``)''}, by themselves, form a regular parenthesis structure of balanced open-close pairs.
 		Redundant parentheses $\ldots ((\ldots ))\ldots$ are forbidden.
 		We call innermost pairs ``$()$'' \emph{central} and noncentral pairs \emph{annular}.
	\item Adjacent parentheses \emph{``)(''}  take the complete form $\ldots)\,\bullet\,(\ldots$ in $\fS$.
	\item Each central pair \emph{``()''} takes the complete form $\ldots(\underline{S}_k \odot\overline{S}_k)\ldots$ in $\fS$, with a pair of possibly empty lap lists $\underline{S}_k\,,\,\overline{S}_k$\,.
	\item Each annular pair of lap lists takes the complete form $\ldots(\underline{S}_k (\ldots)\,\bullet\ldots\bullet\,(\ldots)\overline{S}_k)\ldots$ in $\fS$, with
		non\-empty lists $\underline{S}_k\,,\,\overline{S}_k$\,.
\end{enumerate}
Any pair of nonempty lap lists $S=\underline{S}_k=\{\ell_1,\ldots,\ell_{s_k}\}$ and $\overline{S}_k$ satisfies all of the following remaining four conditions.
\begin{enumerate}[(vi)]
	\item Each lap list $\overline{S}_k$ is the paired lap list $\underline{S}_k$ in reverse.
	\item \emph{Boundary conditions:}
		\begin{equation} \label{laplistbc}
		\ell_1=1\quad \textrm{and, in case of annular pairs: } \quad  s_k \textrm{ is even, and } \ell_{s_k}=1\,.
		\end{equation}
	\item \emph{Neighbor jump condition:}
		\begin{equation} \label{laplist1}
		|\ell_{j+1}-\ell_j| \le 1 \,,\quad \textrm{for}\quad j=1,\dots,s_k-1 \,. \end{equation}
	\item[(ix)] \emph{Alternate jump condition:} \\ 
		If $\ell_{j_0-1} \ne \ell_{j_0} = \dots = \ell_{j_1} \ne \ell_{j_1+1}$ for $1 \le j_0 \le j_1<s_k$, then
		\begin{equation} \label{laplistalt}
		(\ell_{j_0}-\ell_{j_0-1})(\ell_{j_1+1}-\ell_{j_1})=(-1)^{j_1-j_0} \,.
		\end{equation}
		Here we define $\ell_0:=0$ in case $j_0\!=\!1$. 
		For annular pairs with $j_1\!=\!s_k$ we also require $j_1-j_0$ to be even.
\end{enumerate} 
\end{definition}

\begin{figure}[t] 
\begin{center}
\includegraphics[width = 0.9\textwidth]{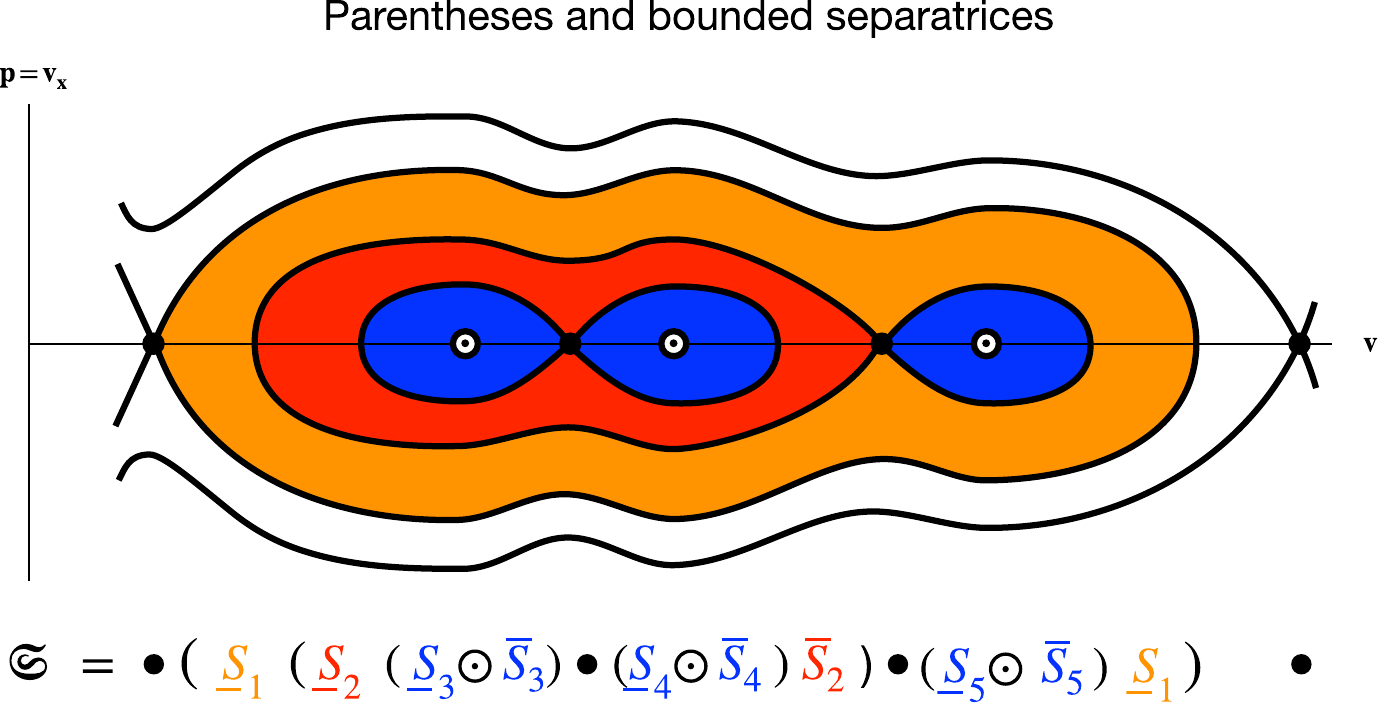}
\end{center}
\caption[Parentheses and bounded separatrices]{\small\emph{Example of the saddle separatrices in the phase plane \eqref{gvp} for a reversible nonlinearity $\mathbf{g}\in\SON$.
Separatrices emanate from, and terminate at ODE-saddles $\bullet$.
The open cyclicity set $\mathbf{C}$ decomposes into five connected components: two annular cyclicity regions (orange, red), and three central cyclicity regions (blue), which are disks punctured by ODE-centers $\odot$.
The resulting regular parenthesis structure $(\,(\,(\,)\,(\,)\,)\,(\,)\,)$ indicates the traversal of the horizontal $\mathbf{v}$-axis by 2-periodic orbits at their nondegenerate extrema in the respective regions.
In the full lap signature $\mathfrak{S}$, the color coding of the lap lists $\underline{S}_k\,,\,\overline{S}_k$ corresponds to their cyclicity regions. 
See also \eqref{Sfiglap}.}}
\label{figlap}
\end{figure}

To any period map $T$ we may associate a specific lap signature $\fS(T)$, as illustrated in figure \ref{figlap}.
We mark spatially homogeneous ODE-saddles $\mathbf{e}$ by bullets $\bullet$, and ODE-centers by $\odot$.
All equilibria, and their lap numbers $\ell_j$\,, are ordered by their values at the left Neumann boundary $\mathbf{x}\!=\!0$.

Parentheses correspond to bounded saddle separatrices in figure \ref{figlap}. 
They indicate the nesting of cyclicity regions, i.e.\ of connected components of the cyclicity set $\mathbf{C}$ from \eqref{Tmap}, as follows.
By dissipativity \eqref{diss}, the cyclicity regions are bounded.
Their boundaries are given by bounded separatrices to ODE-saddles $\bullet$ and, in the case of disk-like central regions, by punctures at ODE-centers $\odot$.
Consider the intervals of definition $\mathbf{D}=\mathbf{C}\cap\{\mathbf{p}=0\}$ of the period map on the horizontal $\mathbf{v}$-axis; see \eqref{Tmap} again.
The min/max type of extrema of the periodic orbits, on the axis, never changes within any interval.
We attach parentheses to each separatrix crossings, according to the shape ``$($'' or ``$)$'' of the separatrix orbit, at that extremum.
Near ODE-saddles $\bullet$\,, we represent the nearby periodic orbits, instead, to satisfy definition \ref{defSign}(iii).
Near ODE-centers $\odot$ we do not place additional parentheses.
We omit parentheses of annular regions, for which the lists of 2-periodic orbits are empty.

Continuity of the period map $T$ in \eqref{Tmap}, and regularity of the values $T\!=\!2/\ell$ already impose strong restrictions on such lap signatures $\fS(T)$. 
These restrictions in fact imply that the lap signature $\fS(T)$ of $T\!=\!T_\mathbf{g}$ is a full lap signature $\fS$, in the sense of definition \ref{defSign}, for any $\mathbf{g}\!\in\! \SON$.
See figure \ref{figTg} again for an illustration, in particular of alternate jump condition (ix).
The full lap signature of the example in figure \ref{figlap} reads
\begin{equation}
\label{Sfiglap}
\fS=\bullet\,\Big{(}\underline{S}_1\left(\underline{S}_2(\underline{S}_3\odot\overline{S}_3)\bullet(\underline{S}_4\odot\overline{S}_4)\overline{S}_2\right)\bullet(\underline{S}_5\odot\overline{S}_5)\,\overline{S}_1\Big{)}\,\bullet\,,
\end{equation}
with appropriate lists $\underline{S}_k\,,\,\overline{S}_k$ of lap numbers.

Conversely, and much less obviously, the lap signatures $\fS(T_\mathbf{g})$ of such period maps realize all abstract full lap signatures $\fS$, for suitable spatially reversible $\mathbf{g}\!\in\!\SON$.
For a proof see proposition 3 in \cite{roc07}.

For later reference, we briefly recall how to read any full lap signature $\fS$ of definition \ref{defSign} as a plane forest.
For general background on trees and parentheses see for example \cite{catalan}.

\begin{definition}\label{deftree}
Let $\fS$ be a full lap signature.
Then the parenthesis structure of definition \ref{defSign}(iii) alone defines a plane forest which we call the \emph{signature forest} of $\fS$.
\emph{Vertices} are the pairs of matching parentheses.
They are labeled by the left-to-right ordering of $\fS$.
The \emph{child relation} is defined by nesting of parenthesis pairs.
\emph{Leaves}, i.e.\ vertices without children, are the \emph{vertices of central pairs}.
All remaining \emph{annular vertices}, i.e. the vertices of annular pairs, possess at least two children.
Each connected component of the signature forest is a plane (i.e. ordered) tree.
A connected signature forest is also called a \emph{signature tree}.
\end{definition}
In this definition of vertices, we can equivalently replace the matched pairs of parentheses by the pairs $\underline{S}_k\,,\ \overline{S}_k$ of lap lists (which may be empty at leaves, alias central vertices).

The main result of \cite{roc25} is that global attractors, and hence connection graphs, of nonlinearities $\mathbf{f},\mathbf{g}\!\in\!\SON$ with the same lap signature are $C^0$ orbit equivalent:
\begin{equation} 
\label{SignA}
\fS(T_\mathbf{f})=\fS(T_\mathbf{g}) \quad \Rightarrow \quad \cA_\mathbf{f}^\cN\cong\cA_\mathbf{g}^\cN \,. 
\end{equation} 

The proof is based on a 1-1 correspondence 
\begin{equation}
\label{Signsigma}
\fS(T_\mathbf{f})=\fS(T_\mathbf{g}) \quad \Leftrightarrow \quad \sigma_\mathbf{f}=\sigma_\mathbf{g}
\end{equation}
between lap signatures $\fS$ and those Sturm permutations $\sigma$ which arise from the reversible class $\mathbf{f},\mathbf{g}\in\SON$\,.
In \cite{firowo12a, roc25}, this subclass of all Sturm permutations has been characterized as \emph{integrable Sturm involutions}.
Indeed, we have already observed in \eqref{urev}, \eqref{sigmarev}, how reversibility \eqref{grev} of $\mathbf{g}$ implies the involution property $\sigma\!=\!\sigma^{-1}$ for $\sigma\!=\!\sigma_\mathbf{g}$.

The deeper notion of \emph{integrability} arises from the non-homogeneous min/max pairs $\underline{\mathbf{v}}_j\,,\ \overline{\mathbf{v}}_j\in\cEgN$ of \eqref{vpair}.
They come in two flavors, depending on the parity of their \emph{lap number} $\ell(v)=\mathbf{z}(\mathbf{v_x})$ in \eqref{Tell}. 
Each min/max pair with \emph{odd lap number} $\ell$ contributes a \emph{2-cycle} 
\begin{equation}
\label{2cycle}
(\underline c_{\,\alpha} \ \overline c_\alpha)\,
\end{equation}
to the Sturm permutation $\sigma\!=\!\sigma_\mathbf{g}$\,.
Without loss, we present the 2-cycle as $\underline c_{\,\alpha} < \overline c_\alpha$\,.
Lap numbers $\ell$ of \emph{even parity} only contribute fixed points.
In particular, again, $\sigma$ is an involution, given by
the disjoint 2-cycles \eqref{2cycle} which arise from odd lap numbers $\ell$.

The following terminology is natural, since 2-cycles \eqref{2cycle} are associated with periodic ODE orbits \eqref{gvp}, \eqref{Tell}:
\begin{itemize}
\item two distinct 2-cycles $(\underline c_{\,\alpha} \ \overline c_\alpha)$ and 
$(\underline c_{\,\beta} \ \overline c_\beta)$ are called \emph{intersecting}, if the corresponding intervals have a 
nonempty intersection, i.e. 
$(\underline c_{\,\alpha}, \overline c_\alpha) \cap (\underline c_{\,\beta}, \overline c_\beta) \ne \emptyset$\,;
\item intersecting 2-cycles $(\underline c_{\,\alpha} \ \overline c_\alpha)$ and 
$(\underline c_{\,\beta} \ \overline c_\beta)$ are called \emph{nested} if one of the intervals strictly contains 
the other, i.e. $(\underline c_{\,\beta} - \underline c_{\,\alpha})(\overline c_\alpha - \overline c_\beta)>0$\,;
\item nested 2-cycles $(\underline c_{\,\alpha} \ \overline c_\alpha)$ and 
$(\underline c_{\,\beta} \ \overline c_\beta)$ are \emph{centered} if the mid-points of the respective 
intervals coincide, i.e. 
$\underline c_{\,\beta} - \underline c_{\,\alpha} = \overline c_\alpha - \overline c_\beta$\,.
\end{itemize}
A fixed point $e$ of $\sigma$ is called $\sigma$\emph{-stable}, if it possesses formal Morse index $i_e\!=\!0$.
Similarly, we define the $\sigma$\emph{-stable core} $C_\alpha$ of a 2-cycle $(\underline c_{\,\alpha} \ \overline c_\alpha)$ to consist of its interior $\sigma$-stable points. 
We say that two nested 2-cycles of $\sigma$ are \emph{core-equivalent} if both share the same $\sigma$-stable core. 
The above terminology cumulates in the following definition \cite{firowo12a,roc25}.

\begin{definition}\label{defint}
An involution $\sigma$ is called \emph{integrable} if the following three properties hold:
\begin{enumerate}[(i)]
\item any two intersecting 2-cycles are nested;
\item any two core-equivalent 2-cycles are centered; and
\item any pair of non-nested 2-cycles is separated by a $\sigma$-stable point.
\end{enumerate}
\end{definition}

The necessary conditions collected in this definition characterize the additional restrictions of Sturm permutations $\sigma_\mathbf{g}^\cN$ in the spatially reversible class; see \cite{firowo12a, rofi14, roc25} for full details.

\begin{theorem}\label{thmHamrev}
A Sturm permutation $\sigma\!\in\! S_{2n+1}$ is realizable as the Sturm permutation $\sigma\!=\!\sigma_\mathbf{g}^\cN$ of some  spatially reversible nonlinearity $\mathbf{g}\!\in\!\SON$, if and only if $\sigma$ is an integrable involution.
The same statement holds true in the Hamiltonian class $\mathbf{g}\!\in\!\SH$.

In particular, any integrable Sturm involution $\sigma$ is realizable by a Hamiltonian nonlinearity $\mathbf{g}\!=\!\mathbf{g}(\mathbf{u})$, rather than just by some reversible $\mathbf{g}\!\in\!\SON$.
Moreover, the classes of Hamiltonian Sturm permutations and of integrable Sturm involutions coincide.
\end{theorem}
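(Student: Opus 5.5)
The proof closes a cycle of implications between three classes: Hamiltonian Sturm permutations $\sigma_\mathbf{g}$ with $\mathbf{g}\in\SH$, reversible Sturm permutations with $\mathbf{g}\in\SON$, and integrable Sturm involutions in the sense of definition \ref{defint}. The inclusion of Hamiltonian into reversible is trivial, since $\SH\subset\SON$ by \eqref{SNchain}. It therefore remains to show two things. First, every reversible Sturm permutation is an integrable involution. Second, every integrable Sturm involution is realized by some Hamiltonian $\mathbf{g}=\mathbf{g}(\mathbf{u})$. The statements about realizability in both classes, and the coincidence of the classes, then follow at once.

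For necessity, take $\mathbf{g}\in\SON$. The involution property $\sigma_\mathbf{g}=\sigma_\mathbf{g}^{-1}$ comes from \eqref{urev}--\eqref{sigmarev}. The 2-cycles \eqref{2cycle} are exactly the odd-lap min/max pairs $\underline{\mathbf{v}},\overline{\mathbf{v}}$ on a common periodic orbit of \eqref{gvp}. I will verify conditions (i)--(iii) of definition \ref{defint} directly in the phase plane.
\begin{enumerate}[(i)]
\item Periodic orbits are nested Jordan curves, or disjoint ones in different cyclicity regions. Their $\mathbf{v}$-axis intersection intervals are therefore either nested or disjoint, never crossing. Reading the labels \eqref{sigma0} by boundary order $\mathbf{v}(0)$ turns this into condition (i).
\item The $\sigma$-stable points are the Morse index zero equilibria, which I expect to be exactly the ODE-saddles $\bullet$, by the formula for $i_j$ in terms of $\sigma$ from \cite{firo96}. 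Core-equivalent nested orbits then lie in the same annular region, with no saddle separating them. Reflection symmetry $\mathbf{p}\mapsto-\mathbf{p}$ together with the parity of lap counts, i.e.\ conditions (vi),(vii) of definition \ref{defSign} read in the ordering, gives equal numbers of equilibria on either side. That is centering, condition (ii).
\item Non-nested orbits lie in different cyclicity regions, and these are separated on the axis by a saddle $\bullet$, by definition \ref{defSign}(iii). This gives condition (iii).
\end{enumerate}

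For sufficiency, start from an integrable Sturm involution $\sigma$. The plan is to reconstruct a full lap signature $\fS$ in the sense of definition \ref{defSign} and show $\sigma$ is the permutation attached to it by \eqref{Signsigma}. Step one: the $\sigma$-stable fixed points become the bullets $\bullet$. Step two: the nesting of 2-cycles between bullets gives the parenthesis structure and the signature forest of definition \ref{deftree}. Step three: the formal zero numbers $z_{jk}$ and Morse indices, computed from $\sigma$, assign lap numbers to all remaining entries. Conditions (vi)--(ix) must then be checked. The symmetry (vi) and the boundary conditions (vii) follow from centering and separation. The jump conditions (viii),(ix) follow from the meander property, since adjacent equilibria along $\cM$ differ in zero number by at most one. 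Finally, realize $\fS$ by a period map $T$ with crossing pattern prescribed by $\fS$, as in proposition 3 of \cite{roc07}, and do so with an honest Hamiltonian potential $\mathbf{G}$. The wells of $\mathbf{G}$ must follow the bullet/center pattern, with separatrix nesting given by relative heights of the local maxima of $\mathbf{G}$. Local perturbations of $\mathbf{G}$ in each well then shape $T$ to cross the levels $2/\ell$ as required, with the monotone/nonmonotone laps matching figure \ref{figTg}.

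The main obstacle is this last Hamiltonian realization of an arbitrary $\fS$. For a general reversible $\mathbf{g}(\mathbf{u},\mathbf{u}_\mathbf{x}^2)$ one has extra freedom to shape each cyclicity region's period map independently. For $\mathbf{g}(\mathbf{u})$, annular regions share potential with the central regions inside them. I would first try induction over the signature forest, from leaves to root. At each step I would modify $\mathbf{G}$ only far outside the already constructed separatrix level, so that inner period maps are unchanged. The outer annulus period map is then adjusted near its bounding separatrix, where $T\to\infty$, and near its inner separatrices.
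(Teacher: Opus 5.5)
Your skeleton matches the route the paper takes, though the paper takes it by citation rather than by proof. The steps are: the trivial inclusion $\SH\subset\SON$; necessity from the phase plane; and the converse via the correspondence \eqref{Signsigma} between full lap signatures and integrable Sturm involutions \cite{firowo12a,roc25}, followed by a realization theorem for period maps \cite{roc07}.

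Your necessity argument is fine in outline, with three corrections to (ii). The common region may be central as well as annular. Nested orbits in different regions are never core-equivalent, because the outer one encloses a saddle on the outer boundary of the inner orbit's region. Centering holds because every intermediate periodic orbit of the region meets each of the two axis intervals exactly once, so lap parity plays no role.

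The first genuine gap is the reconstruction step of the converse. You must show that the signature read off from an abstract integrable Sturm involution $\sigma$ satisfies (vi)--(ix) of definition \ref{defSign}, \emph{and} that its associated permutation is again $\sigma$. That is the whole content of the characterization, and your one-line justifications do not deliver it. What the Sturm property gives is that axis-adjacent equilibria have formal Morse indices differing by exactly one. The natural route is to define lap numbers from these indices. In the realized case $i_j=\ell_j$ or $\ell_j+1$ according to the crossing direction, so $\beta_j=i_{j+1}-i_j$ reproduces the ballot \eqref{betaT}, and \eqref{ellbeta} then gives (viii) and (ix) for free. The real work is to prove the following about the lap numbers so defined:
\begin{itemize}
\item they are positive;
\item they are odd exactly at the 2-cycle entries, and even at the remaining non-$\sigma$-stable fixed points other than the centers;
\item each list starts with $\ell_1=1$;
\item annular ballots close up, with $s_k$ even and $\ell_{s_k}=1$.
\end{itemize}
These parity and closure statements do not follow from centering and separation alone. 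Likewise, ``adjacent equilibria along $\cM$ differ in zero number by at most one'' does not produce the alternate jump condition (ix).

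The second gap is the one you flag yourself: the Hamiltonian realization is left open, and the leaves-to-root induction is not yet an argument. Orbits of an outer annulus sweep across all the inner wells. Their period $T(h)=2\int dv/\sqrt{2(h-\mathbf{G}(v))}$ therefore contains a fixed inner contribution, which decreases monotonically in the energy $h$ and diverges logarithmically at the inner separatrix level. You may modify $\mathbf{G}$ only on the outer walls, and these must remain monotone so that no new centers appear. You would still have to show that $T$ can be pushed below $2/\ell$ for the largest required $\ell$, and made to follow the prescribed up/down crossing pattern.

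This is exactly what the realization theorem for period maps of planar Hamiltonian systems in \cite{roc07} supplies, and what the paper imports (together with \cite{firowo12a}) for the Hamiltonian assertions. Without it, or a complete proof of your induction, you obtain at most the reversible statement, and that only once the first gap is closed.
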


With these results we have reduced the question of counting the integrable Sturm involutions $\sigma\!=\!\sigma_\mathbf{g}^\cN$ resulting from reversible nonlinearities $\mathbf{g}\!\in\!\SON$, or just from Hamiltonian nonlinearities $\mathbf{g}\!\in\!\SHN$, to the enumeration of the full lap signatures $\fS$ of definition \ref{defSign}.
Technically, this is our approach which will prove the combinatorial results of section \ref{Intro}.
The ODE and PDE steps which then lead to associated lists of Sturm attractors have been outlined in section \ref{Back}, already.

Note how reducible Hamiltonian Sturm permutations, i.e. stacked associated Sturm global attractors, are equivalent to \emph{non-connected} signature forests, in the sense of definition \ref{deftree}. 
Indeed, the ODE-saddles $\bullet$ which separate root pairs of parentheses amount to gluing equilibria $\mathbf{v}_{N_*}$ of stacked attractors; see \eqref{stack}.
In contrast, $b_0(x,y)$ and its various associates listed just before theorem \ref{thmbasy}, all possess interpretations via generating functions associated to \emph{connected} signature forests, i.e.\ to signature trees, alias irreducible Hamiltonian Sturm permutations $\sigma$.

\section{The examples $n=r+q\leq 4$}\label{Ex}

\begin{table}[t!]
\begin{center}
\begin{tabular}{|| c || l | c | c | c | c ||} \hline\hline
     &        &             & & &  \\ [-3mm]
$n\!=\!r\!+\!q$ & $r.q$    & $b_{rq}$ & full lap signature $\fS$ & tree &  $\sigma\!\in\!\SHN$ 
\\[1mm] \hline\hline 
     &        &             & & &  \\ [-3mm]
1     & 1.0(i)          &       1       & $\bullet\,\big(\odot\big)\,\bullet$ & $\checkmark$ & $()\!\in\! S_3$ \\[1mm] 
\hline
     &        &             & & &  \\[-3mm]
2       & 1.1(i)          &       1       & $\bullet\,\big(\,\{1\}\odot\{1\}\,\big)\,\bullet$ & $\checkmark$ & $(2\,4)$ \\
     &        &             & & &  \\[-3mm]
     & 2.0(i)          &       1       & $\bullet\,\big(\odot\big)\bullet\big(\odot\big)\,\bullet$ & $-$ & $()\!\in\! S_5$
\\[1mm] 
\hline
     &        &             & & &  \\ [-3mm]
3       & 1.2(i)       &       2       & $\bullet\,\big(\,\{1,2\}\odot\{2,1\}\,\big)\,\bullet$ & $\checkmark$ & $(2\,6)$ \\ 
       & 1.2(ii)      &                & $\bullet\,\big(\,\{1,1\}\odot\{1,1\}\,\big)\,\bullet$ & $\checkmark$ & $(2\ 6)\,(3\,5)$ \\
     &        &             & & &  \\[-3mm]
       & 2.1(i)          &       2       & $\bullet\,\big(\,\{1\}\odot\{1\}\,\big)\bullet\big(\odot\big)\,\bullet$ & $-$ & $(2\,4)$ \\
     &        &             & & &  \\[-3mm] 
    & 3.0(i)          &       1       & $\bullet\,\big(\odot\big)\,\bullet\,\big(\odot\big)\,\bullet\,\big(\odot\big)\,\bullet$ & $-$ & $()\!\in\! S_7$ 
\\[1mm] 
\hline
     &        &             & & &  \\[-3mm]
4       & 1.3(i)       &       3       & $\bullet\,\big(\,\{1,2,3\}\odot\{3,2,1\}\,\big)\,\bullet$ & $\checkmark$ & $(2\,8)\,(4\,6)$ \\ 
       & 1.3(ii)      &                & $\bullet\,\big(\,\{1,2,2\}\odot\{2,2,1\}\,\big)\,\bullet$ & $\checkmark$ & $(2\,8)$ \\ 
       & 1.3(iii)     &                & $\bullet\,\big(\,\{1,1,1\}\odot\{1,1,1\}\,\big)\,\bullet$ & $\checkmark$ & $(2\,8)\,(3\,7)\,(4\,6)$ \\
     &        &             & & &  \\ [-3mm]
       & 2.2(i)       &       6       & $\bullet\,\big(\,\{1,2\}\odot\{2,1\}\,\big)\bullet\big(\odot\big)\,\bullet$ & $-$ & $(2\,6)$ \\
       & 2.2(ii)      &                & $\bullet\,\big(\,\{1,1\}\odot\{1,1\}\,\big)\bullet\big(\odot\big)\,\bullet$ & $-$ & $(2\ 6)\,(3\,5)$ \\
       & 2.2(iii)     &                & $\bullet\,\big(\,\{1\}\odot\{1\}\,\big)\bullet\big(\,\{1\}\odot\{1\}\,\big)\,\bullet$ & $-$ & $(2\ 4)\,(6\,8)$ \\                  
       & 2.2(iv)     &                & $\bullet\,\big(\,\{1,1\}\,\big(\odot\big) \bullet \big(\odot\big)\,\{1,1\}\,\big)\,\bullet$ & $\checkmark$ & $(2\,8)\,(3\,7)$  \\  
     &        &             & & &  \\[-3mm]     
       & 3.1(i)       &       3       & $\bullet\,\big(\,\{1\}\odot\{1\}\,\big)\,\bullet\,\big(\odot\big)\,\bullet\,\big(\odot\big)\,\bullet$ & $-$ & $(2\,4)$ \\
       & 3.1(ii)      &                & $\bullet\,\big(\odot\big)\,\bullet\,\big(\,\{1\}\odot\{1\}\,\big)\,\bullet\,\big(\odot\big)\,\bullet$ & $-$ & $(4\,6)$ \\
     &        &             & & &  \\[-3mm]   
     & 4.0(i)          &       1       & $\bullet\,\big(\odot\big)\,\bullet\,\big(\odot\big)\,\bullet\,\big(\odot\big)\,\bullet\,\big(\odot\big)\,\bullet$ & $-$ & $()\!\in\! S_9$ \\[-3mm]
     &        &             & & &  \\        
\hline\hline
\end{tabular} 
\end{center} 
\caption[Full lap signatures for $r+q\leq4$]{\small\emph{Table of the 21 full lap signatures for Hamiltonian nonlinearities $\mathbf{g}\in\SHN$ or, equivalently, for $\mathbf{g}\in\SON$.
The first two columns indicate $n\!=\!r\!+\!q\!\leq\!4$ and $r.q$\,.
Here $r$ counts the ODE-centers $\odot$ and $q$ counts the periodic ODE orbits of minimal periods $2/\ell_j$\,, alias the total number of lap entries $\ell_j$ in the lap lists $\underline{S}_k$\,.
In column four, the entries $b_{rq}$ of \eqref{bxy} count the number of full lap signatures $\fS$, including (omitted) reflections of $\fS$ under the spatial involution $\kappa$ from \eqref{sigmakappa}.
Omitting reflections reduces the case count from 21 to 17.
Checkmarks $\checkmark$ in the column ``tree'' highlight tree signatures $\fS$, alias irreducible Hamiltonian Sturm permutations $\sigma$, according to definition \ref{deftree} and \eqref{stack}.
Non-connected forests, alias reducible Sturm permutations or stacked global attractors are marked by $-$\,.
The last column lists the associated Hamiltonian Sturm permutations $\sigma\in S_{2n+1}$\,, in 2-cycle notation \eqref{2cycle}.
See also \cite{firowo12b}.
}}
\label{tablap}
\end{table}

Consider nonlinearities $\mathbf{g}\!\in\!\SON$ of spatially reversible type, or just the Hamiltonian case $\mathbf{g}\!\in\!\SHN$.
We enumerate the 21 full lap signatures $\fS$ with $r\!\geq\!1$ ODE-centers $\odot$, with $q$ orbits of (not necessarily minimal) period 2, and with $n\!=\!r\!+\!q\!\leq\!4$.
By the 1-1 correspondence \eqref{Signsigma}, equivalently, the signatures $\fS$ enumerate the corresponding Hamiltonian Sturm permutations, alias the integrable Sturm involutions $\sigma$.
See table \ref{tablap}.

In \cite{firowo12b} we had already listed all 17 cases with $N:=n+2q\leq9$.
The cases were distilled by a brute force scan of the 42 nontrivial Sturm permutations, known from \cite{fiedlertatra,firo96}, for integrable Sturm involutions.

At present, expansion \eqref{bxy} of the explicit generating function $b(x,y)$ in theorem \ref{thmbxy} provides the counts $b_{rq}$ of cases $r.q$ in table \ref{tablap}.
Reflections of $\fS$ under the spatial involution $\kappa$ of \eqref{sigmakappa}, however, have been omitted in the table.
The four asymmetric lap signatures 2.1(i), 2.2(i), 2.2(ii), and 3.1(i) therefore contribute doubly to $b_{rq}$.

By hand, it is actually easy to enumerate those signature forests $\fS$ in table \ref{tablap} which only consist of roots, i.e.\ of stacked central regions $\ldots(\underline{S}_k \odot\overline{S}_k)\ldots$ in $\fS$.
See \eqref{stack}, definitions \ref{defSign}(iv) and \ref{deftree}, and the last paragraph of section \ref{Sign}.
The intermediate ODE-saddles $\bullet$ simply concatenate such regions, by property (iii).
It then remains to fill up the lists $\underline{S}_k\,\ \overline{S}_k$ by a total of $2q$ entries, according to rules (vi)--(ix).

By rules (v), (vii), only case 2.2(iv) turns out to possess an annular pair $\ldots(\underline{S}_k (\ldots)\,\bullet\ldots\bullet\,(\ldots)\overline{S}_k)\ldots$ in $\fS$, with non\-empty lists $\underline{S}_k\,,\,\overline{S}_k$\,.
Indeed, even length $s_k\geq2$ of each annular list requires $q\geq2$.
Hence our constraint $r+q\leq4$ implies $1\leq r\leq2$.
Since $r\!=\!1$ does not admit interior ODE-saddles $\bullet$, we conclude $r\!=\!q\!=\!2$, as claimed.
Since a single annular list of minimal length $s_k\!=\!2\!=\!q$ uses up all lap number resources, \eqref{laplistbc} implies $\underline{S}\!=\!\overline{S}\!=\!\{1,1\}$\,.
This identifies the only annular case 2.2(iv).

For explicit sketches of the meanders $\cM$ and the Neumann connection graphs $\cC^\cN$ associated to the integrable Sturm involutions $\sigma$ of table \ref{tablap}, we refer to \cite{fiedlertatra,firo96,firowo12b}. 
For the six central cases $r.q\!=\!1.4$ see also cases 3.4(i)--(vi) of table 5.1 in \cite{rofiln}.
The cases $1.q(i)$ with $r\!=\!1$ and $\underline{S}=\{1,\ldots,q\}$ refer to the $(q\!+\!1)$-dimensional \emph{Chafee-Infante attractors} of $\mathbf{g}(u)=\lambda^2 u(1-u^2)$ with parameter $q\!<\!\lambda/\pi\!<\!q\!+\!1$.
See figure \ref{figCI3} for the case $q\!=\!2$, and \cite{firo22,rofiln} for detailed discussions of the general case, its history, and its peculiar characterizations.

\section{Proof of theorem \ref{thmbxy} on generating functions}\label{Pfb}

We recall that the coefficients $b_{rq}$ of the refined generating function $b(x,y)$ in \eqref{bxy} count the full lap signatures $\fS$.
Here $r\geq1$ denotes the number of centers $\odot$, and $q$ counts the number of lap entries in the annular or circular lap lists $\underline{S}_k$\,.

We prove the explicit expressions \eqref{bb0} and \eqref{b0} for $b(x,y)$ in theorem \ref{thmbxy} in three steps.

In our first step, let $b_0(x,y)$ denote the generating function of signature trees, according to definition \ref{deftree}.
Then the $m$-th power $b_0^m$ is the generating function counting signature forests which consist of $m$ sequentially labeled connected components.
Summation of the geometric series over $m\geq1$ yields $b=b_0/(1-b_0)$, proving claim \eqref{bb0}.

To prove the remaining claim \eqref{b0} on signature trees, we introduce the two generating functions
\begin{align}
\label{fa}
    \fa(y)&=\sum_{s\geq2\textrm{ even}}\fa_s\,y^s\quad\textrm{and}   \\
\label{fc}
    \fc(y)&=\quad\sum_{s\geq0}\ \ \fc_s\,y^s      
\end{align}
which count (nonempty) annular lists and (possibly empty) central lists $\underline{S}_k$ of length $s$, respectively.
See definition \ref{defSign}(iv)--(ix).

Two cases arise.
First, suppose that the root vertex of our signature tree $\fS$ is already central, i.e.\ a leaf.
This case contributes a term $x\fc(y)$ to the generating function $b_0$\,.

Second suppose the root vertex is annular.
Depending on its number $m\geq2$ of children, this contributes a recursion term $a(y)b_0^m$ to the generating function $b_0$\,.

Summation over all contributions from both cases provides a quadratic recursion
\begin{equation}
\label{b0rec}
b_0=x\fc(y)+\fa(y)\,b_0^2/(1-b_0)\,.
\end{equation}
for $b_0$\,.
We solve for $b_0$ in terms of $\fa(y)$ and $\fc(y)$, explicitly:
\begin{equation}
\label{b0ac}
b_0=\frac{1}{2(\fa + 1)}\left(1+x\fc-\sqrt{(1-x\fc)^2-4\fa\,x\fc}\right)\,.
\end{equation}
Here we have determined the sign of the square root such that $b_0(0,y)=0$, to accommodate the requirement $r\geq1$.
In the remaining two steps of lemma \ref{lemfa} and \ref{lemfc} we derive explicit expressions \eqref{faex} and \eqref{fcex} for the generating functions $\fa(y)$ and $\fc(y)$ of central and annular lap lists, respectively.
Inserting these expressions into \eqref{b0ac} then proves the remaining claim \eqref{b0} of theorem \ref{thmbxy}.

\begin{lemma}\label{lemfa}
The generating function $\fa(y)$, which counts annular lap lists, is
\begin{equation}
\label{faex}
\fa(y)=-1+\frac{1}{2y^2}\left(1-\sqrt{1-4y^2}\right)\,;\qquad \fa(0)=0\,.
\end{equation}
In particular, $\fa(y)$ is locally complex analytic for $|y|<1/2$.
The explicit coefficients $\fa_s$ of $y^s,\,s\!\geq\!0$, vanish for $s\!=\!0$ and for odd $s$.
For even $s\!=\!2s'\!\geq2\!$ they are the \emph{Catalan numbers}
\begin{equation}
\label{fas}
\fa_{s}=\mathfrak{C}_{s'}:=\frac{1}{s'+1}\binom{2s'}{s'}\,.
\end{equation}
\end{lemma}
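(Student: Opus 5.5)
The plan is to establish a bijection between annular lap lists of length $s=2s'$ and Dyck paths of semilength $s'$. This gives \eqref{fas} directly. The closed form \eqref{faex} then follows from the Catalan generating function $\sum_{s'\ge0}\mathfrak{C}_{s'}w^{s'}=(1-\sqrt{1-4w})/(2w)$, evaluated at $w=y^2$ with the $s'=0$ term $1$ removed, since annular lists are nonempty by definition \ref{defSign}(v). Analyticity for $|y|<1/2$ is then immediate from the branch point at $4y^2=1$. The vanishing of $\fa_0$ and of all odd coefficients is built into \eqref{laplistbc}.

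First I will rephrase the conditions (vii)--(ix) for annular lists uniformly. I adopt the convention $\ell_0:=0$, as in (ix), and also $\ell_{s+1}:=0$. The extra requirement ``$j_1-j_0$ even when $j_1=s$'' should then be exactly \eqref{laplistalt} applied to the final plateau; I will check this against the admissible example $\{1,1\}$ of case 2.2(iv) to fix the parity convention. In this form an annular list is a concatenation of maximal plateaus with neighbouring values differing by exactly $1$, by \eqref{laplist1}. The parity of each plateau's length is dictated by \eqref{laplistalt}. A plateau passed monotonically, with entering and exiting jumps of equal sign, must have odd length. A plateau at a local peak or valley, with jumps of opposite sign, must have even length, up to the parity convention just fixed.

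Next I map a Dyck path with heights $h_0=0,h_1,\dots,h_s=0$, $h_j\ge0$, $|h_j-h_{j-1}|=1$, to the list $\ell_j:=\max(h_{j-1},h_j)$ for $j=1,\dots,s$. I then verify the following.
\begin{enumerate}[(a)]
\item The map produces annular lists. We have $\ell_1=\ell_s=1$, jumps satisfy $|\ell_{j+1}-\ell_j|\le1$, and a maximal plateau of value $k$ corresponds to a maximal zigzag of the path between heights $k-1$ and $k$. The parity of the zigzag's length is determined by whether the path enters and leaves it from below or from above, which matches \eqref{laplistalt}.
\item The map is invertible. Given an admissible list, each plateau of value $k$ is reconstructed uniquely as a zigzag between levels $k-1$ and $k$. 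The parity rule guarantees that consecutive zigzags glue up to a genuine lattice path, starting and ending at $0$, which stays nonnegative because all $\ell_j\ge1$.
\end{enumerate}
As sanity checks, for $s=4$ the two Dyck paths $UDUD,UUDD$ give the lists $\{1,1,1,1\}$ and $\{1,2,2,1\}$. The list $\{1,2,1,1\}$ is indeed excluded by (ix), because its single-entry peak plateau would need product $+1$.

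The main obstacle is the careful bookkeeping in the inverse direction (b). In particular the boundary plateaus need attention: the first plateau, with $\ell_0=0$, and the last plateau, governed by the special annular parity clause. I must confirm that the parity constraints there are exactly those forcing the zigzag to start at height $0$ with an up-step and end at height $0$ with a down-step. If the $\max$ encoding turns out to mismatch a parity convention, I would instead try $\ell_j=\lceil (h_{j-1}+h_j)/2\rceil$, which is equivalent here, or recheck the convention on the small cases $s=2,4,6$, where the expected counts are $1,2,5$.
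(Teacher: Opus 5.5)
Your argument is correct. The bijection is in fact the paper's own: with $\beta_j:=h_j-h_{j-1}$ you have $\max(h_{j-1},h_j)=h_j+\tfrac12(1-\beta_j)$, which is exactly the recurrence \eqref{ellbeta} with $\ell_1=\beta_1=1$, and your closing step via \eqref{catx}, \eqref{faCy} is identical. What differs is how the bijection is justified. The paper reads the ballot off a realizing period map, $\beta_j=-\mathrm{sign}\,T_j'$ as in \eqref{betaT}. In that picture your height between consecutive crossings is $h=\lfloor 2/T\rfloor$, which is $0$ at both ends of an annular interval, where $T\to\infty$. The paper invokes \eqref{Signsigma} and theorem \ref{thmHamrev} precisely to avoid checking definition \ref{defSign}(vii)--(ix), and leaves the converse as ``easy to see''. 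Your direct verification is self-contained and does not depend on the ODE realization results.

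It also exposes a point the paper's route glosses over. Your convention $\ell_{s+1}:=0$ agrees with the literal annular clause ``$j_1-j_0$ even'' whenever the final plateau starts at $j_0>1$, since its entry jump is then $1-2=-1$. It does not agree for the constant list $\{1,\ldots,1\}$. There $j_0=1$ and $j_1=s$, so the literal clause demands $s$ odd, contradicting (vii) and excluding $\{1,1\}$ of case 2.2(iv). Your convention instead demands $s$ even. So your planned check on $\{1,1\}$ will not show exact equivalence. It will show that the uniform convention, which is also what the period map dictates, is the intended reading. With that reading, your steps (a) and (b) go through as you describe and give the counts $1,2,5$ for $s=2,4,6$.
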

\textbf{Proof.}\quad
The Catalan numbers $\mathfrak{C}_{s'}$ are listed under  \url{https://oeis.org/A000108}\,.
See also \cite{catalan}.
Among many equivalent variants, they count \emph{ballot sequences} $\boldsymbol{\beta}=\beta_1\ldots\beta_{2s}$ such that $\beta_j\!=\!\pm1$ possess nonnegative partial sums $\sum_{j\leq j_0}\beta_j$, and their total sum is zero.

We claim that ballot sequences $\boldsymbol{\beta}$ of length $2s\geq2$ are in 1-1 correspondence to annular lap lists $\underline{S}=\{\ell_1,\ldots,\ell_{2s}\}$.
To prove this, we recall the equivalence \eqref{Signsigma} and theorem \ref{thmHamrev} between full lap signatures and integrable Sturm involutions, alias the period maps $T\!=\!T_\mathbf{g}$ of PDE nonlinearities $\mathbf{g}\!\in\!\SHN$.
More precisely, let $\mathbf{v}_j$ denote the equilibrium associated to the lap number $\ell_j=\mathbf{z}(\mathbf{v}_{j,x})$ in $\underline{S}$.
Then the period map $v_0\mapsto T(v_0)$ crosses the regular level $T\!=\!2/\ell_j$ transversely, at $v_0=\mathbf{v}_j(0)$.
See figure \ref{figTg}(right) for an annular example.
Let $T_j':=T'(\mathbf{v}_j(0))\neq0$ denote the derivative of the time map, at the crossing, and
define the ballot $\boldsymbol{\beta}$ of $\underline{S}$ as
\begin{equation}
\label{betaT}
\beta_j:=-\,\mathrm{sign}\,T_j'\,.
\end{equation}
Indeed, the partial sums of upward crossings $\beta_j\!=\!-1$ of $T(v_0)$, alias ``$y$'' or ``$)$'',  can never exceed the accumulated downward crossings $\beta_j\!=\!+1$, alias ``$x$'' or ``$($'', strictly.
For annular $\underline{S}$, the last crossing of $T$ at $\ell_{2s}\!=\!1$ must be upward, with $\beta_{2s}\!=\!-1$.
Therefore each level $T\!=\!2/\ell_j$ must be crossed downwards as many times as upwards, and the total some of $\beta_j$ is zero.
Conversely, it is easy to see how any ballot $\boldsymbol{\beta}$ defines a unique annular lap list $\underline{S}$ which satisfies definition \ref{defSign}.

We may express this 1-1 correspondence by the explicit recurrence
\begin{equation}
\label{ellbeta}
\ell_{j+1}-\ell_j=\tfrac{1}{2}\left(\beta_j+\beta_{j+1}\right) \in \{-1,0,+1\},
\end{equation}
for $1\leq j<2s$, with the initializing conventions $\ell_1\!=\!1\!=\!\beta_1$\,.
Resorting to period maps circumvents the tedium of pedantic definition checking, here.

Comparing nonzero counts, for $s\geq1$, the 1-1 correspondence \eqref{betaT} alias \eqref{ellbeta} proves
\begin{equation}
\label{faC}
\fa_{2s'}=\mathfrak{C}_{s'}\,,
\end{equation}
and claim \eqref{fas} of the lemma.

By definition \ref{defSign}(v) for annular lap lists, $\fa(y)$ had to omit the empty term $\fa(0)\!=\!0$.
By \eqref{faC}, however, reinsertion of that term restores the known generating function 
\begin{equation}
\label{catx}
\mathfrak{C}(x)=\sum_{s'\geq0} \mathfrak{C}_{s'}\, x^{s'}=\frac{1}{2x}\left(1-\sqrt{1-4x}\right)
\end{equation}
of the Catalan numbers \cite{catalan}.
In other words, \eqref{faC} implies
\begin{equation}
\label{faCy}
\fa(y)=\mathfrak{C}(y^2)-1\,.
\end{equation}
Substitution of \eqref{catx} therefore proves the remaining claim \eqref{faex}, and the lemma.
\qed

\begin{lemma}\label{lemfc}
The generating function $\fc$, which counts central lap lists, is
\begin{equation}
\label{fcex}
\fc(y)=2\,/\left(1-2y+\sqrt{1-4y^2}\right)\,;\qquad \fc(0)=1\,.
\end{equation}
In particular, $\fc(y)$ is complex analytic for $|y|<1/2$.
The explicit coefficients $\fc_s$ of $y^s,\,s\!\geq\!0$, in terms of the floor function $\lfloor\cdot\rfloor$ are
\begin{equation}
\label{fcs}
\fc_s=\binom{s}{\lfloor s/2\rfloor}\,.
\end{equation}
\end{lemma}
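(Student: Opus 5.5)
The plan is to run the same argument as in the proof of lemma \ref{lemfa}, with ballot sequences replaced by ballot \emph{prefixes}. A central lap list $\underline{S}=\{\ell_1,\ldots,\ell_s\}$ satisfies definition \ref{defSign}(vi)--(ix) with only the initial boundary condition $\ell_1=1$. There is no terminal condition $\ell_s=1$, and no parity requirement on $s$ or on the final plateau.

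First I would attach to $\underline{S}$ the sign sequence $\beta_j:=-\,\mathrm{sign}\,T_j'$, exactly as in \eqref{betaT}, using a realizing period map $T$ of a central cyclicity region. Here I invoke theorem \ref{thmHamrev} and the realization result of \cite{roc07}, so that no pedantic definition checking is needed. The argument of lemma \ref{lemfa} shows that upward crossings can never exceed the accumulated downward crossings, so all partial sums $\sum_{j\le j_0}\beta_j$ are nonnegative. In the central case the period map tends to the finite limit $2\pi/\sqrt{\mathbf{g}'(\mathbf{e})}$ at the center $\odot$, so the total sum need not vanish. Conversely, the recurrence \eqref{ellbeta} with $\ell_1=1=\beta_1$ reconstructs a unique admissible central lap list from any such prefix. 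I would briefly check that the alternate jump condition (ix) is automatic from \eqref{ellbeta}, and that dropping the annular end conditions removes exactly the requirement of total sum zero. This gives a bijection between central lap lists of length $s$ and $\pm1$ sequences of length $s$ with nonnegative partial sums; the empty list corresponds to $s=0$.

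Second, I would count these prefixes. Decompose each prefix at its last visit to height zero. It is either a complete Dyck word, or a Dyck word followed by an up step $+1$ and a further prefix that never returns to zero, which is again an arbitrary prefix shifted up by one. Writing $D(y)=\mathfrak{C}(y^2)$ from \eqref{catx} and \eqref{faCy}, this yields $\fc=D+yD\fc$, hence
\begin{equation*}
\fc=\frac{\mathfrak{C}(y^2)}{1-y\,\mathfrak{C}(y^2)}\,.
\end{equation*}
Substituting $\mathfrak{C}(y^2)=(1-\sqrt{1-4y^2})/(2y^2)$ and rationalizing, the identity $(1-2y)^2-(1-4y^2)=-4y(1-2y)$ should reduce this to \eqref{fcex}. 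Evaluating at $y=0$ gives $\fc(0)=1$. Analyticity for $|y|<1/2$ follows from the square root and from the fact that the denominator $1-2y+\sqrt{1-4y^2}$ does not vanish there.

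For the coefficients \eqref{fcs}, I would use the classical reflection (ballot) argument. The number of $\pm1$ words of length $s$ with nonnegative partial sums equals $\binom{s}{\lfloor s/2\rfloor}$: one subtracts the reflected bad paths for each final height and telescopes the resulting sum. Alternatively, one checks the identity $\sum_s\binom{s}{\lfloor s/2\rfloor}y^s=\fc(y)$ by splitting into even and odd $s$ and using the central binomial generating function $1/\sqrt{1-4y^2}$.

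The main obstacle I expect is the first step: verifying that the bijection \eqref{ellbeta} is onto the central lists. In particular, one must confirm that condition (ix) at a final plateau imposes nothing extra in the central case. Everything after that is routine generating-function algebra.
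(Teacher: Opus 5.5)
Your proposal is correct. Its first step is the paper's: the paper also transports the correspondence \eqref{betaT}, \eqref{ellbeta} to central lists and simply drops the zero-sum requirement.

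Your direct check goes through without the obstacle you anticipate. With heights $h_j=\sum_{i\le j}\beta_i$, the recurrence \eqref{ellbeta} gives $\ell_j=\max(h_{j-1},h_j)$. Hence:
\begin{itemize}
\item positivity of all $\ell_j$ is exactly nonnegativity of all $h_j$;
\item plateaus of $\underline{S}$ are alternating runs of $\beta$;
\item condition (ix) for $j_1<s$ is exactly the consistency of that alternation with the exit jump;
\item (ix) imposes nothing on a final plateau of a central list.
\end{itemize}

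The two routes differ in the enumeration. The paper doubles each word via \eqref{beta2s} to a symmetric Dyck word of length $2s$. Under \eqref{ellbeta}, this is the word of the concatenation $\underline{S}_k\overline{S}_k$. The paper then quotes OEIS A001405 for both \eqref{fcs} and \eqref{fcex1}, and leaves the match with \eqref{fcex} implicit.

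You instead derive the generating function from the last-return decomposition $\fc=\mathfrak{C}(y^2)+y\,\mathfrak{C}(y^2)\,\fc$, reusing the Catalan function of lemma \ref{lemfa}, and you obtain the coefficients by reflection. The algebra is shorter than your rationalization suggests. Since $\mathfrak{C}(y^2)=2/(1+\sqrt{1-4y^2})$, the quotient $\mathfrak{C}(y^2)/(1-y\,\mathfrak{C}(y^2))$ becomes \eqref{fcex} in one line. Squaring $1-2y=-\sqrt{1-4y^2}$ shows that the denominator can vanish only at $y\in\{0,1/2\}$, and at $y=0$ it equals $2$; this settles analyticity for $|y|<1/2$.

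Your route is self-contained and actually proves the formulas the paper cites. The paper's symmetrization gives a neat bijective picture, but it outsources the count.
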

\textbf{Proof.}\quad
We invoke the same 1-1 correspondence \eqref{betaT}, \eqref{ellbeta} between central lap lists $\underline{S}$ and ballot sequences  as in the proof of lemma \ref{lemfa}.
We drop the zero sum requirement for the totality of all $\beta_j$\,.
We therefore admit sequences of any length $s\geq0$, including the empty sequence.
We may symmetrize any such ballot by 
\begin{equation}
\label{beta2s}
\beta_{s+j}:=-\beta_{s+1-j}\,,\quad\textrm{for }1\leq j\leq s\,.
\end{equation}
This doubles the length of the ballot to $2s$, keeps partial sums nonnegative by symmetric repetition, and achieves zero total sum.

In other words, $\fc_s$ counts the resulting symmetric ballots $\boldsymbol{\beta}$ of length $2s\geq0$.
For their counts \eqref{fcs} see \url{https://oeis.org/A001405}.
The same reference provides the generating function
\begin{equation}
\label{fcex1}
\fc(y)=\frac{1}{2y}\left(-1+\sqrt{\frac{1+2y}{1-2y}}\right)\,,
\end{equation}
which coincides with claim \eqref{fcex}.
This proves the lemma.
\qed

As announced before, we can now insert the explicit expressions \eqref{faex} and \eqref{fcex}, for the generating functions $\fa(y)$ and $\fc(y)$ of central and annular lap lists, into the generating function \eqref{b0ac} of refined tree counts $b_0(x,y)$.
This proves the remaining claim \eqref{b0}, and completes the proof of refined counting theorem \ref{thmbxy}.

\section{Asymptotics of signature counts}\label{Asym}

In this section we derive the leading terms of asymptotic expansions for the coefficients of the generating functions  $\fa(y), \fc(y)$, and $a(z)$.
These refer to counts $\fa_{2s}, \fc_s$ annular and central lap lists $\underline{S}$ of lengths $2s$ and $s$, respectively, and to the counts $a_n$ of full lap signatures $\mathfrak{S}$ of type $r.q$ with prescribed $r+q\!=\!n$.
This section and the next closely follow methods developed and surveyed in the beautifully written book \cite{flajolet} by Flajolet and Sedgewick.

\begin{proposition}\label{propfac}
The asymptotics of the counts $\fa_{2s}$ and $\fc_s$ for $s\rightarrow\infty$ are
\begin{align}
\label{faas}
    \fa_{2s}=\mathfrak{C}_s&\sim\tfrac{1}{\sqrt{\pi}}\,4^s\,s^{-3/2} \,;  \\
\label{fcas}
    \fc_s&\sim\sqrt{\tfrac{2}{\pi}}\,2^s\,s^{-1/2}\,.
\end{align}
Here $\mathfrak{C}_s$ denotes the Catalan numbers.
The asymptotics $\sim$ indicate limiting quotients 1 of the left and right hand sides.
\end{proposition}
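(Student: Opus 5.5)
We prove both asymptotics directly from the explicit coefficient formulas of lemmas \ref{lemfa} and \ref{lemfc}, using Stirling's formula. As an independent cross-check, we can also read them off from the singularity analysis of the closed-form generating functions \eqref{faex} and \eqref{fcex}, in the spirit of \cite{flajolet}.

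For \eqref{faas}, lemma \ref{lemfa} gives $\fa_{2s}=\mathfrak{C}_s=\binom{2s}{s}/(s+1)$. Stirling's formula $m!\sim\sqrt{2\pi m}\,(m/e)^m$ yields the central binomial asymptotics $\binom{2s}{s}\sim 4^s/\sqrt{\pi s}$. Dividing by $s+1\sim s$ then gives $\fa_{2s}\sim\tfrac{1}{\sqrt\pi}4^s s^{-3/2}$.

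The same result follows from singularity analysis of $\mathfrak{C}(w)=(1-\sqrt{1-4w})/(2w)$. Its dominant singularity is at $w=1/4$. There the local expansion is $\mathfrak{C}(w)=2-2\sqrt{1-4w}+\cO(1-4w)$. The transfer theorem maps $(1-4w)^{1/2}$ to $4^s s^{-3/2}/\Gamma(-1/2)$, and $\Gamma(-1/2)=-2\sqrt\pi$. Combining these gives the same constant $1/\sqrt\pi$.

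For \eqref{fcas}, lemma \ref{lemfc} gives $\fc_s=\binom{s}{\lfloor s/2\rfloor}$, and we split into even and odd $s$. For $s=2k$, Stirling gives $\binom{2k}{k}\sim 4^k/\sqrt{\pi k}=2^s/\sqrt{\pi s/2}=\sqrt{2/\pi}\,2^s s^{-1/2}$. For $s=2k+1$, we use $\binom{2k+1}{k}=\tfrac{2k+1}{k+1}\binom{2k}{k}\sim 2\cdot 4^k/\sqrt{\pi k}$. Expressed in $s$, this is $2^s/\sqrt{\pi k}$ with $k\sim s/2$, which again equals $\sqrt{2/\pi}\,2^s s^{-1/2}$. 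Both parities therefore give the same limit, which proves \eqref{fcas}.

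For the singularity cross-check, use the form \eqref{fcex1}, $\fc(y)=\frac{1}{2y}\bigl(-1+\sqrt{(1+2y)/(1-2y)}\bigr)$. This function has radius of convergence $1/2$ and two singularities on that circle, at $y=\pm1/2$. At $y=1/2$ there is a pole-type singularity $(1-2y)^{-1/2}$ with coefficient $\sqrt2$. The transfer theorem maps it to $\sqrt2\,2^s s^{-1/2}/\Gamma(1/2)=\sqrt{2/\pi}\,2^s s^{-1/2}$. At $y=-1/2$ the function behaves like $(1+2y)^{1/2}$, which contributes only $\cO(2^s s^{-3/2})$.

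The only delicate point is this second singularity at $y=-1/2$ on the circle of convergence. Flajolet–Sedgewick's multiple-singularity version of the transfer theorem handles it by summing the contributions of the two singularities. The elementary Stirling route avoids the issue altogether, so we will present that route as the proof and mention the singularity analysis only as a consistency remark.
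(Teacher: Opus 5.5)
Your proof is correct and follows the paper's route: you apply Stirling's formula to the explicit binomial expressions \eqref{fas} and \eqref{fcs}, and use singularity analysis of the generating functions only as a consistency check. Your explicit even/odd split for $\fc_s$, and your remark on the second singularity at $y=-1/2$, are slightly more careful than the paper's own cross-check: there the factor $B=\tfrac12\sqrt{1+2y}$ is not analytic beyond $|y|<1/2$, although the decomposition \eqref{gammazdec} formally requires analyticity on a larger disk.
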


\textbf{Proof.}\quad Apply Stirling's formula to the binomial right hand sides of explicit expressions \eqref{fas}, \eqref{fcs}. 
\qed

The generating function $a(z)$ of corollary \ref{coraz} is slightly more involved. 
For example, we are not aware of any explicit expression for the counts $a_n$ of full lap signatures $\mathfrak{S}$ with a total of $n\!=\!r\!+\!q$ ODE-centers and ODE periodic orbits of minimal periods $2/\ell_j$\,.

We give a slightly adapted account of the procedure in \cite{flajolet}, which arrives at asymptotic expansions of a general generating function $\gamma(z)$ directly, without any explicit representations of its power series coefficients $\gamma_n$\,.
Assume a finite radius $0<\zeta<\infty$ of convergence for $\gamma(z)$.
Note $\zeta>0$ is real, because the coefficients of any generating function $\gamma(z)$ are real nonnegative.
In fact, $z\!=\!\zeta\!>\!0$ is the singularity of $\gamma$ closest to $z\!=\!0$.
More specifically, let us assume a decomposition of $\gamma(z)$ as
\begin{equation}
\label{gammazdec}
\gamma(z)=A(z)+B(z)\,\mathfrak{r}(z/\zeta)\,.
\end{equation}
Here $A(z), B(z)$ are assumed complex analytic in a larger disk $|z|\!<\!\rho\!>\!\zeta$, and the singularity $\mathfrak{r}$ abbreviates the standardized root
\begin{equation}
\label{root}
\mathfrak{r}(z):=(1-z)^{-\eta}
\end{equation}
with some complex $\eta\not\in\{0,-1,-2,\ldots\}$.
Then the much more general analysis in sections VI.1--VI.4 of \cite{flajolet} derives the following coefficient asymptotics:
\begin{equation}
\label{gammansim}
\gamma_n\,=\, \tilde{\gamma}_n\,(1+\cO(\tfrac{1}{n}))\,,\quad\textrm{with}\quad \tilde{\gamma}_n:=\frac{B(\zeta)}{\Gamma(\eta)}\;\zeta^{\,-n}\,n^{\eta-1}\,,
\end{equation}
for $n\rightarrow\infty$.
The proof in \cite{flajolet} is based on a careful analysis of decompositions like \eqref{gammazdec} via the residue theorem, along suitable complex contours of Hankel type.

To gain confidence, and routine, let us first illustrate the method \eqref{gammazdec}--\eqref{gammansim} and validate the asymptotic expansions \eqref{faas} and \eqref{fcas}, without the binomial Stirling detour \eqref{fas}, \eqref{fcs}.
For $\gamma$ we pick the generating functions $\mathfrak{a}(y)=\mathfrak{C}(y^2)-1$ in \eqref{faex}, \eqref{faCy}, and $\mathfrak{c}(y)$ in \eqref{fcex}. 
For both, the Catalan generating function $\mathfrak{C}(x)$ in \eqref{catx} and the generating function $\mathfrak{c}(y)$ in \eqref{fcex1}, we first multiply by the denominators $x$ and $y$, respectively, to shift the indices $s$ by 1.
We then obtain asymptotics of the coefficients analogous to \eqref{gammansim}, in either case.
Indeed the decomposition \eqref{gammazdec} for $x\,\mathfrak{C}(x)$ has $\eta=-1/2,\ \zeta\!=\!1/4,\  A\!=\!1/2,\ B\!=\!-1/2$, and $\Gamma(\eta)\!=\!-2\sqrt{\pi}$.

Similarly, $y\mathfrak{c}(y)$ leads to $\eta\!=\!+1/2,\ \zeta\!=\!1/2,\ A\!=\!-1,\ B\!=\!\tfrac{1}{2}\sqrt{1+2y}$, and $\Gamma(\eta)\!=\!\sqrt{\pi}$.
Invoking asymptotics \eqref{gammansim} proves proposition \ref{propfac}, once again.

We now return to the generating functions $\gamma(z)=a(z)=a_0/(1-a_0)$ and $\gamma(z)\!=\!a_0(z)$ of corollary \ref{coraz} proper.
Inserting subscripts $0$ to denote the tree counts $a_{0n}$ we will obtain analogous decompositions \eqref{gammazdec} and asymptotics \eqref{gammansim} with a leading coefficient $B_0(\zeta)$, instead.

\begin{theorem}\label{thmasa}
Let $z\!=\!\zeta>0$ denote the smallest positive zero of the term $\mathfrak{p}_*(z)$ of \eqref{fpz} under the square root in the numerator of $a_0(z)$ and $a(z)=a_0/(1-a_0)$\,; see \eqref{a0}.
Numerically,
\begin{equation}
\label{zeta}
    \zeta= 0.304\,293\,818\,497\,977\ldots\,.  
\end{equation}
Then $a_0(z)$ and $a(z)$ are complex analytic for $|z|<\zeta$, with a single singularity of type $\mathfrak{r}(z/\zeta)=\sqrt{1-z/\zeta}$ on the circle $|z|\!=\!\zeta$.
In particular, we obtain asymptotics \eqref{gammansim} with $\eta\!=\!-1/2,\ \Gamma(\eta)\!=\!-2\sqrt{\pi}$, and the universal leading coefficients
\begin{align}
\label{B0zeta}
-B_0(\zeta)/(2\sqrt{\pi})&= 0.178\,901\,287\,554\,763\ldots\quad\textrm{for }\ \gamma_n=a_{0n}\,,\textrm{ and}\\
\label{Bzeta}
\phantom{_0}-B(\zeta)/(2\sqrt{\pi})&= 1.732\,588\,240\,483\,502 \ldots\quad\textrm{for }\  \gamma_n=a_{n} \,.
\end{align}
\end{theorem}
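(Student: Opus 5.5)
Our plan is to apply the transfer scheme \eqref{gammazdec}--\eqref{gammansim}, with $\eta=-1/2$, first to $a_0(z)$ and then to $a=a_0/(1-a_0)$. We start by locating all singularities of $a_0$ in \eqref{a0}. The only non-analytic ingredients are the square root $\sqrt{1-4z^2}$, which is singular at $z=\pm1/2$, and $\sqrt{2\,\mathfrak{p}_*(z)}$. The prefactor $z^2/(1-\sqrt{1-4z^2})=(1+\sqrt{1-4z^2})/4$ is analytic for $|z|<1/2$. The denominator $1-2z+\sqrt{1-4z^2}$ does not vanish there: for real $z\in(-1/2,1/2)$ both summands are positive, and in general $1-2z+\sqrt{1-4z^2}=\sqrt{1-2z}\,(\sqrt{1-2z}+\sqrt{1+2z})$, whose second factor vanishes only if $1-2z=-(1+2z)$, which is impossible. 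Hence the first singularity of $a_0$ is the zero of $\mathfrak{p}_*$ of smallest modulus inside $|z|<1/2$. On the real axis, $\mathfrak{p}_*(0)=2>0$, while $\mathfrak{p}_*(1/2)=5-4-1/2>0$. So we must locate the sign change of $\mathfrak{p}_*$ numerically. Solving $\mathfrak{p}_*=0$ amounts to $9(1-4z^2)=(5-8z-2z^2)^2$, a quartic, and we identify $\zeta\approx0.3043$ as its relevant root, checking that $5-8z-2z^2>0$ there so that the root is genuine and not spurious. We also check $\mathfrak{p}_*'(\zeta)\neq0$, so $\zeta$ is a simple zero. Then $\sqrt{2\mathfrak{p}_*(z)}=\sqrt{1-z/\zeta}\cdot h(z)$ with $h$ analytic near $\zeta$ and $h(\zeta)^2=-2\zeta\,\mathfrak{p}_*'(\zeta)$.

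This gives the decomposition $a_0=A_0(z)+B_0(z)\sqrt{1-z/\zeta}$ with
\[
B_0(z)=-\frac{z^2\,h(z)}{(1-\sqrt{1-4z^2})(1-2z+\sqrt{1-4z^2})},
\]
and \eqref{gammansim} with $\Gamma(-1/2)=-2\sqrt\pi$ yields \eqref{B0zeta}, with the constant $-B_0(\zeta)/(2\sqrt\pi)$ evaluated numerically.

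Next we verify that no other zero of $\mathfrak{p}_*$ lies on or inside $|z|=\zeta$. This is the main obstacle, since the definition of $\zeta$ only refers to positive zeros. For $z\neq\zeta$ with $|z|\le\zeta$ we argue by positivity: $a_0$ has nonnegative coefficients, so by Pringsheim its radius of convergence is a real singularity, namely $\zeta$. To exclude further singularities on the circle $|z|=\zeta$ we use aperiodicity, $a_{0n}>0$ for all $n\ge1$ (by table \ref{tablap} and the Chafee--Infante trees), which forces $|a_0(z)|<a_0(\zeta)$ for $|z|=\zeta$, $z\ne\zeta$. Failing that route, we would examine the four roots of the quartic and the branch choices directly. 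Along the way we must also confirm that the correct branch of $\sqrt{1-4z^2}$ is used on each root.

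For $a=a_0/(1-a_0)$ we additionally need $a_0(z)\neq1$ for $|z|\le\zeta$. By positivity it suffices to show $a_0(\zeta)<1$; on the circle, $|a_0(z)|\le a_0(|z|)\le a_0(\zeta)<1$. This inequality is checked numerically from the explicit value $a_0(\zeta)=A_0(\zeta)$. Then $1/(1-a_0)$ is analytic near $\zeta$, and expanding gives the square-root coefficient
\[
B(\zeta)=\frac{B_0(\zeta)}{(1-a_0(\zeta))^2},
\]
which yields \eqref{Bzeta} through \eqref{gammansim}. A consistency check is that $1.7326/0.1789\approx(1-a_0(\zeta))^{-2}$.
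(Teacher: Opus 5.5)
Your route is the paper's: the factorization $\mathfrak{p}_*=\mathfrak{p}_{*1}(z)\,(\zeta-z)$ gives the same $B_0$ (your $h=\sqrt{2\zeta\,\mathfrak{p}_{*1}}$). The bound $|a_0(z)|\le a_0(|z|)\le a_0(\zeta)<1$ is the same, and $B(\zeta)=B_0(\zeta)/(1-a_0(\zeta))^2$ comes from the same expansion.

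One step fails as written: the \emph{uniqueness} of the singularity on $|z|=\zeta$. Pringsheim, together with the explicit formula, does give radius of convergence $\zeta$ and excludes zeros of $\mathfrak{p}_*$ in the open disk. Aperiodicity $a_{0n}>0$, however, only yields the strict inequality $|a_0(z)|<a_0(\zeta)$ on the rest of the circle. That is a statement about values, not about analyticity. It is the tool for excluding further zeros of $1-a_0$ in supercritical sequence schemas, and it is not even needed here since $a_0(\zeta)<1$. A function with positive aperiodic coefficients can have further square-root singularities on its circle of convergence. For example, $2-\sqrt{1-z}-\sqrt{1-z^3}$ is singular at all cube roots of unity.

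So your fallback is mandatory, and it is what the paper does. Your squared equation reduces to $z^4+8z^3+20z^2-20z+4=0$. Its real roots are $\zeta\approx0.3043$ and $\rho\approx0.4899$, both genuine since $5-8z-2z^2>0$ there. The remaining complex-conjugate pair has product $4/(\zeta\rho)\approx26.8$, hence modulus about $5.2$. Thus $\zeta$ and $\rho$ are the only zeros of $\mathfrak{p}_*$ in $|z|<1/2$. This gives the single singularity on $|z|=\zeta$. It also supplies what your sketch left implicit: $\mathfrak{p}_{*1}$, hence $h$ and $B_0$, are analytic in the larger disk $|z|<\rho$ required by \eqref{gammazdec}, not merely near $\zeta$.

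Two small slips. First, $\sqrt{1-2z}+\sqrt{1+2z}\neq0$ because both roots have positive real part for $|z|<1/2$, not because of the condition you wrote. Second, $1/(1-a_0)$ is not analytic at $\zeta$. What is analytic is $(z,w)\mapsto(A_0+B_0w)/(1-A_0-B_0w)$ near $(\zeta,0)$, and your expansion with $w=\mathfrak{r}$ must be read that way.
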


\textbf{Proof.}
We start with the tree generating function $a_0(z)$ of \eqref{a0}.
To prepare a decomposition \eqref{gammazdec} for $a_0$, we first address the denominators $1-\sqrt{1-4z^2}$ and 
\begin{equation}
\label{a0denom}
1-2z+\sqrt{1-4z^2}=\sqrt{2(1-2z)(1+\sqrt{1-4z^2})}\,.
\end{equation}
This establishes analyticity for $|z|<1/2$.

Next, we decompose the numerator term $\mathfrak{p}_*(z)$ from \eqref{fpz} as
\begin{equation}
\label{fp1z}
\mathfrak{p}_*(z)=\mathfrak{p}_{*1}(z)\cdot(\zeta-z)\,.
\end{equation}
Here $\mathfrak{p}_*(\zeta)=0$ implies complex analyticity of $\mathfrak{p}_{*1}(z)$, for $|z|<1/2$. 
See \eqref{zeta} for the numerical value of $z\!=\!\zeta$.
For later reference, we also compute the unique real zero $\rho$ of $\mathfrak{p}_{*1}$\,, alias the second real zero of $\mathfrak{p}_*$\,, as
\begin{equation}
\label{rho}
z=\rho= 0.489\,855\,871\,726\,143\ldots\,. 
\end{equation}
The remaining two zeros are complex conjugate, of larger absolute value.
(We omit the explicit quartic expressions of all zeros by the Ferrari-Cardano formula.)
All four zeros are algebraically simple, of course.
For later reference, we also compute
\begin{equation}
\label{fp1zeta}
\mathfrak{p}_{*1}(\zeta)=-\mathfrak{p}_*'(\zeta)=4.615\,301\,063\,108\,272\ldots\neq0\,.
\end{equation}

We can now decompose $a_0=A_0+B_0\,\mathfrak{r}$ from \eqref{a0}, as in \eqref{gammazdec}, with exponent $\eta\!=\!-1/2$ in \eqref{root}, and
\begin{align}
\label{A0z}
A_0(z)&:=\phantom{-}\frac{z^2}{1-\sqrt{1-4 z^2}}\cdot
\left(1+\frac{2z}{1-2z+\sqrt{1-4z^2}}\right)\,;
\\[2mm]
\label{B0z}
B_0(z)&:= -\frac{\sqrt{2}\,z^2}{1-\sqrt{1-4 z^2}}\cdot
\frac
{\sqrt{\mathfrak{p}_{*1}(z)\,\zeta}}
{1-2z+\sqrt{1-4z^2}}\,.
\end{align}
Obviously, $A_0(z)$ is complex analytic for $|z|<1/2$.
For $B_0(z)$, we only obtain complex analyticity in the smaller disk $|z|<\rho<1/2$ delimited by \eqref{rho}.
Since $\rho>\zeta$, expansion \eqref{gammansim} applies with $\eta\!=\!-1/2$.
Evaluation at $z\!=\!\zeta$, with \eqref{fp1zeta} and $\Gamma(-1/2)\!=\!-2\sqrt{\pi}$, proves claim \eqref{B0zeta}.

To prove claim \eqref{Bzeta} for the full generating function $a(z)=a_0/(1-a_0)$, we first observe
\begin{equation}
\label{a0<1}
|a_0(z)|\leq a_0(\zeta)<1\,,\quad\textrm{for }|z|\leq\zeta\,.
\end{equation}
Indeed, $|a_0(z)|\leq a_0(|z|)\leq a_0(\zeta)$, since $a_0(z)$ is a generating function with nonnegative real coefficients.
Direct evaluation of \eqref{a0} and \eqref{zeta} proves \eqref{a0<1} with
\begin{equation}
\label{a0zeta}
a_0(\zeta)=0.678\,664\,245\,184\,786\ldots<1\,.
\end{equation}
By continuity of $a_0$, we may extend \eqref{a0<1} to a slightly larger disk $|z|<\varrho>\zeta$.

We can therefore insert decomposition \eqref{A0z}, \eqref{B0z} as
\begin{equation}
\label{aa0}
a=\frac{a_0}{1-a_0}=\frac{A_0+B_0\,\mathfrak{r}}{1-A_0-B_0\,\mathfrak{r}}=:A+B\,\mathfrak{r}\,.
\end{equation}
Here we expand the denominator as a geometric series in $\mathfrak{r}$, and subsume even powers of $\mathfrak{r}$ into $A$, and odd powers into $B\,\mathfrak{r}$.
Note analyticity of $A,B$ for $|z|<\varrho>\zeta$.
Therefore asymptotics \eqref{gammansim} apply, with $\eta\!=\!-1/2$.
Although the square root $\mathfrak{r}(z/\zeta)=\sqrt{1-z/\zeta}$ depends on $z$, we may evaluate $B(\zeta)$ by treating $\mathfrak{r}$ as an independent variable:
\begin{equation}
\label{Bzetar}
B(\zeta)=\partial_\mathfrak{r}\big{|}_{\mathfrak{r}=0}\,a=\partial_\mathfrak{r}\big{|}_{\mathfrak{r}=0}\,\frac{-1\phantom{0}\ }{1-1/a_0}=\frac{1}{(1-a_0)^2}\,\partial_\mathfrak{r}\big{|}_{\mathfrak{r}=0}\,a_0=\frac{B_0(\zeta)}{(1-a_0(\zeta))^2}\,.
\end{equation}
Substitution of \eqref{B0zeta} and \eqref{a0zeta} validates claim \eqref{Bzeta}, and proves the theorem. \qed

\section{Gaussian asymptotics of refined signature counts}\label{Gauss}

In the first part of this section we prove the Gaussian asymptotics of theorem \ref{thmGauss}.
The proof is an adaptation of theorems IX.8 and IX.12 in \cite{flajolet}. 
In the second part we derive the asymptotics of the Gaussian means $\boldsymbol{\mu}_n$ and variances $\boldsymbol{\sigma}^2$.
We reuse some notation already familiar from the previous section.
We formulate the requirements on general bivariate generating functions $f\!=\!f(u,z)$, from \cite{flajolet}, which will then be readily verified in our specific case of the refined generating function $f(u,z):=b(uz,z)$ from \eqref{uz}.

Let $f\!=\!f(u,z)$ be a general generating function, in $z$, with parameter $u$.
Reminiscent of \eqref{gammazdec}, we assume a decomposition of $f$ as
\begin{equation}
\label{fuzdec}
f(u,z)=A(u,z)+B(u,z)\,\left(\mathfrak{p}(u,z)\right)^{-\eta}\,,
\end{equation} 
with some fixed complex $\eta\not\in\{0,-1,-2,\ldots\}$.
Here $A,B,\mathfrak{p}$ are assumed to be complex analytic in a $\bC^2$ disk $|u-1|<\eps,\ |z|<\varrho$.
Similarly to $\mathfrak{p}_*(z)$ in \eqref{fpz}, we also assume that $\mathfrak{p}(1,z)=0$ possesses a unique root $z\!=\!\zeta$ with $|z|<\varrho$.
Moreover, let $\zeta\neq0$ and 
\begin{equation}
\label{neq01}
B\neq0\,,\ \partial_u \mathfrak{p}\neq0\,,\ \partial_z \mathfrak{p}\neq 0 
\end{equation}
at $(u,z)=(1,\zeta)$.
Reducing $\eps>0$, if necessary, the implicit function theorem then asserts a unique locally analytic continuation $\zeta(u)$ of the root $z\!=\!\zeta$ in the disk, such that
\begin{equation}
\label{zetau}
\mathfrak{p}(u,\zeta(u))=0\,.
\end{equation}
The final assumption is nondegeneracy of the \emph{analytic variance} $\mathfrak{v}$\,:
\begin{equation}
\label{var}
\begin{aligned}
    \mathfrak{v}(\zeta(1)/\zeta(u))&\phantom{:}\neq0\,,\quad\textrm{where}   \\
    \mathfrak{v}(\xi)&:=\xi''(1)+\xi'(1)-\xi'(1)^2\,,
\end{aligned}
\end{equation}
for any $\xi\!=\!\xi(u)$ such that $\xi(1)\!=\!1$.

In terms of the power series $f(u,z)=\sum_{nr}f_{nr}\,u^rz^n$, the normalization \eqref{pnu} always defines a sequence of random variables $X_n$ with probabilities $\bP(X_n\!=\!r):=p_{nr}$ and generating functions $p_n(u)=\sum_rp_{nr}u^r$.
Under the assumptions \eqref{fuzdec}--\eqref{var}, theorems IX.8 and IX.12 of \cite{flajolet} then imply the conclusions of theorem \ref{thmGauss}, for any general generating function $f\!:=\!f_0(u,z)$ and $f\!:=\!f(u,z)$.

\textbf{Proof of Theorem \ref{thmGauss}.}\quad 
It only remains to validate the above assumptions for our two specific cases $f(u,z):=f_0(u,z)\!=\!b_0(uz,z)$ defined in \eqref{f0} and $f(u,z):=b(uz,z)\!=\!f_0/(1-f_0)$, respectively.
We recall that the first case refers to refined counting of signature trees, whereas the second case counts signature forests; see definition \ref{deftree}.
In section \ref{Asym}, we have already studied the case $u\!=\!1$ of the summations $a_0(z)\!=\!f_0(1,z)$ and $a(z)\!=\!f(1,z)$.

For tree counting $f_0$, we obtain a decomposition \eqref{fuzdec} with $\eta\!=\!1/2$ and
\begin{align}
\label{A0uz}
A_0(u,z)&:=\phantom{-}\frac{z^2}{1-\sqrt{1-4 z^2}}\cdot
\left(1+\frac{2uz}{1-2z+\sqrt{1-4z^2}}\right)\,;
\\[2mm]
\label{B0uz}
B_0(z)&:= \frac{z^2}{1-\sqrt{1-4 z^2}}\cdot\frac{\sqrt{2}}{1-2z-\sqrt{1-4z^2}}\,;
\\[2mm]
\label{fpuz}
\mathfrak{p}(u,z)&:=\mathfrak{p}_*(z)+2 \left(2+(2-z)\sqrt{1-4 z^2}+3z\right) (u-1)-2 z^2 (u-1)^2\,.
\end{align}
Here $\mathfrak{p}(1,z)\!:=\!\mathfrak{p}_*(z)$ is inherited from \eqref{fpz}, of course.
Uniqueness and simplicity of the simple root $z\!=\!\zeta\!=\!\zeta(1)\!>\!0$ follows as in the proof of theorem \ref{thmasa},  \eqref{zeta}; see \eqref{fp1z}, \eqref{fp1zeta}.
Local analyticity of $f_0$ in $(u,z)$ implies analyticity of $A_0\,,B_0\,,\mathfrak{p}$ in the above disk, for some $\varrho>\zeta$ and sufficiently small $\eps>0$, such that uniqueness and analyticity of $\zeta(u)$ also holds.

To validate requirement \eqref{neq01}, first note $B_0(\zeta)\neq0$, by \eqref{B0uz}, \eqref{zeta}. 
We then calculate
\begin{equation}
\label{dufpuz}
\partial_u \mathfrak{p}(1,\zeta)=2 \left(2-(2-\zeta)\sqrt{1-4 \zeta^2}-3\zeta\right) = -0.516\,803\,397\,750\,558\ldots\neq0\,.
\end{equation}
Brute force calculation of the analytic variance $\mathfrak{v}$ in \eqref{var} for $\xi_1(u):=\zeta(1)/\zeta(u)$ confirms
\begin{equation}
\label{varnum}
\begin{aligned}
    \xi_1'(1)&=  \phantom{-}0.367\,986\,760\,847\,991 \ldots\,;\\
    \xi_1''(1)&=  -0.045\,774\,184\,178\,653\ldots\,;\\
    \mathfrak{v}(\xi_1(u))&=\phantom{-}\xi_1''(1)+\xi_1'(1)-\xi_1'(1)^2= 0.186\,798\,320\,509\,941\ldots\neq0\,.
\end{aligned}
\end{equation}
In particular, $\xi_1'(1)\neq0$ validates the nondegeneracy properties of $\mathfrak{p}(u,z)$ in assumption \eqref{neq01}, and $\mathfrak{v}(\xi_1(u))\neq0$ shows that the remaining assumption \eqref{var} is also satisfied.
This completes the proof of theorem \ref{thmGauss}, for the refined counts $f_0$ of signature trees, and for $Y_{0n}$ given by \eqref{Y0n}.

To address forest counting $f=f_0/(1-f_0)$, we proceed analogously to \eqref{aa0}, \eqref{Bzetar}, with $\mathfrak{r}:=\sqrt{\mathfrak{p}}$.
We obtain a decomposition \eqref{fuzdec} with
\begin{equation}
\label{Buz}
B(1,\zeta)=B_0(\zeta)/(1-a_0(\zeta))^2\neq0\,.
\end{equation}
Here $\zeta:=\zeta(1)$ is given by \eqref{zeta} and $a_0(\zeta)=f_0(1,\zeta)$ was already determined in \eqref{a0zeta}.
The exponent $-\eta\!=\!1/2$ and the singular term $\mathfrak{p}(u,z)$ remain unchanged.
Therefore, the proof of the remaining Gaussian claim \eqref{Yn} for the refined counts $f$ of all signature forests proceeds as before, and theorem \ref{thmGauss} is proved.
\qed

\textbf{Proof of Theorem \ref{thmbasy}.}\quad
Again, we start from the refined tree signature counts $f_0$\,, with coefficients $f_{0n}(u)$ defined analogously to \eqref{fnr}\,.
To determine the asymptotics of theorem \ref{thmbasy} on means $\boldsymbol{\mu}_{0n}$ and variances $\boldsymbol{\sigma}_{0n}^2$\,, for $n\rightarrow\infty$, we revisit the analytic approach already outlined in \eqref{gammazdec}--\eqref{gammansim} above.
However, the bivariate generating functions $f_0(u,z)=b_0(uz,z)$ of \eqref{uz}, \eqref{fnr} in $z$, and the sequence $p_{0n}(u)$ of normalized probability generating functions, now contain an additional parameter $u$ such that
\begin{align}
\label{mu0n}
    \boldsymbol{\mu}_{0n}&=p_{0n}'(1)\,;   \\
\label{sigma0n}
    \boldsymbol{\sigma}_{0n}^2&=p_{0n}''(1)+p_{0n}'(1)-p_{0n}'(1)^2\,;
\end{align}
compare also \eqref{var}.
In our previous decomposition method \eqref{gammazdec} for $\gamma:=f_0(u,z)$, the singularity term $\mathfrak{r}(z/\zeta)$ would now depend on $\zeta\!=\!\zeta(u)$, with a movable singularity location $z\!=\!\zeta(u)$ depending on $u$; see \eqref{zetau}.
To achieve uniformity in $u$, we therefore rescale $z$ and discuss the bivariate generating function
\begin{equation}
\label{g0uzdecB}
g_0(u,z):=f_0(u,\zeta(u)z)=\tilde{A}_0(u,\zeta(u)z)+\tilde{B}_0(u,\zeta(u)z)\,\mathfrak{r}\,,
\end{equation} 
with $\mathfrak{r}\!=\!\sqrt{1-z}$, instead.
This pins the movable singularity of $\mathfrak{r}$ to $z\!=\!1$, independently of the parameter $u$.
By analyticity of $\tilde{A}_0$ and $\tilde{B}_0\neq0$ in a complex disk $|z|<\varrho>1,\ |u-1|<\eps$, we may omit the term $\tilde{A}_0$ in the expansion
\begin{equation}
\label{g0uzdec}
g_0(u,z)=\tilde{B}_0(u,\zeta(u))\mathfrak{r}+\cO(\mathfrak{r}^3)\,,
\end{equation}
for $z\rightarrow1$ and uniformly in $u$.
Indeed, the contributions of $\tilde{A}_0$ are exponentially small compared to $\tilde{B}_0\,\mathfrak{r}$, for large $n$.
Therefore \eqref{gammansim} with $\eta\!=\!-1/2$ implies
\begin{align}
\label{f0nuas}
f_{0n}(u)&\phantom{:}=\,-\tfrac{1}{2\sqrt{\pi}}\,\psi_0(u)\,\zeta(u)^{-n}\,n^{-3/2}\cdot\left(1+\cO(\tfrac{1}{n})\right)\,,\quad\textrm{with}\\
\label{psi0a}
\psi_0(u)&:=\phantom{-}\tilde{B}_0(u,\zeta(u))\,.
\end{align}
The expansion holds uniformly in $u$ and in any finite number of $u$-derivatives.
Inserting $u\!=\!1$ we obtain
\begin{equation}
\label{a0nuas}
a_{0n}=f_{0n}(1)=\,-\tfrac{1}{2\sqrt{\pi}}\,\psi_0(1)\,\zeta(1)^{-n}\,n^{-3/2}\left(1+\cO(\tfrac{1}{n})\right)
\end{equation}
Taking quotients \eqref{f0nuas}/\eqref{a0nuas}, we find
\begin{equation}
\label{p0nuas}
p_{0n}(u)=\,\frac{\psi_0(u)}{\psi_0(1)}\,\left(\frac{\zeta(1)}{\zeta(u)}\right)^n+\cO(\tfrac{1}{n})\,.
\end{equation}
We can now apply theorem IX.8 of \cite{flajolet}, with $\beta_n\!=\!\kappa_n\!=\!n$ there, to arrive at the expansions \eqref{muas} and \eqref{varas} claimed in theorem \ref{thmbasy}.
It only remains to calculate the numerical values of the coefficients
\begin{align}
\label{m1}
\mathfrak{m}_1\  &= \ \, -\zeta'(1)/\zeta(1)\,;   \\ 
\label{m0}
\mathfrak{m}_0\, &= \, \phantom{-}\psi_0'(1)/\psi_0(1)\,;   \\
\label{v1}
\mathfrak{v}_1\  &= \ \, \mathfrak{v}(\zeta(1)/\zeta(u))\,;    \\ 
\label{v0}
\mathfrak{v}_0\, &= \ \mathfrak{v}(\psi_0(u)/\psi_0(1))\,.   
\end{align}
See \eqref{var} for the definition of the analytic variance functional $\mathfrak{v}(\cdot)$.

For $f_0$\,, we have already determined the numerical values of $\mathfrak{m}_1=\xi'(1)$ and $\mathfrak{v}_1=\mathfrak{v}(\xi_1(u))$ in \eqref{varnum}.
This proves the numerical claims \eqref{m1num} and \eqref{v1num}, albeit for tree counting $f_0$ instead of forest counting $f$.

To evaluate \eqref{m0} and \eqref{v0}, we use that the root $z\!=\!\zeta(u)$ of $z\mapsto\mathfrak{p}(u,z)$ remains simple; see \eqref{zetau}. 
Therefore
\begin{equation}
\label{fp1uz}
\begin{aligned}
    \mathfrak{p}(u,z)&=\phantom{-}\mathfrak{p}_1(u,z)(\zeta(u)-z)\,,\quad\textrm{with}   \\
    \mathfrak{p}_1(u,\zeta(u))&=-\partial_z\vert_{z=\zeta(u)}\,\mathfrak{p}(u,z)\,.
\end{aligned}
\end{equation}
Analyticity of $\mathfrak{p}_1$ holds in the usual domain $|u-1|<\eps,\ |z|<\varrho>\zeta$.
This identifies
\begin{equation}
\label{psi0p}
\psi_0(u)=\tilde{B}_0(u,\zeta(u))=B_0(\zeta(u))\sqrt{\mathfrak{p}_1(u,\zeta(u))\cdot\zeta(u)}\,,
\end{equation}
with $B_0$ from \eqref{B0uz} and $\mathfrak{p}_1$ from  \eqref{fp1uz}.
Explicit evaluation by implicit differentiation of $\mathfrak{p}(u,\zeta(u))=0$ provides the numerical values
\begin{align}
\label{psi00num}
\psi_0(1) &= -0.634\,188\,552\,116\,790\ldots\,;   \\
\label{psi01num}
\psi_0'(1) &= \phantom{-}0.167\,957\,887\,872\,550 \ldots\,;   \\
\label{psi02num}
\psi_0''(1) &= -0.187\,270\,666\,368\,600\ldots\,.
\end{align}
These imply the numerical values \eqref{m00num}, \eqref{v00num} of the constant coefficients $\mathfrak{m}_{0,0}\,,\ \mathfrak{v}_{0,0}$\,, again for tree counting $f_0$ instead of forest counting $f$.

It remains to address the counts $f=f_0/(1-f_0)$, alias $g=g_0/(1-g_0)$ of all signature forests.
Omitting all indices 0, which referred to $f_0$, we then argue as in \eqref{mu0n}--\eqref{p0nuas}.
This proves the asymptotics
\begin{align}
\label{pnuas}
p_n(u)&\phantom{:}=\,\frac{\psi(u)}{\psi(1)}\,\left(\frac{\zeta(1)}{\zeta(u)}\right)^n+\cO(\tfrac{1}{n})\,,\quad\textrm{where}\\[1mm]
\label{psi}
\psi(u)&:=\,\tilde{B}(u,\zeta(u))
\end{align}
originates from the decomposition 
\begin{equation}
\label{guzdecB}
g(u,z):=f(u,\zeta(u)z)=\tilde{A}(u,\zeta(u)z)+\tilde{B}(u,\zeta(u)z)\,\mathfrak{r}\,.
\end{equation} 
This is just \eqref{g0uzdecB}, with indices 0 removed and $\mathfrak{r}\!=\!\sqrt{1-z}$ again.

To determine $\tilde{B}(u,\zeta(u))$ in \eqref{psi}, \eqref{guzdecB}, we first replace $a$ and $a_0$ in \eqref{aa0}, \eqref{Bzetar} by the rescaled versions $g$ and $g_0$ of $f$ and $f_0$, respectively. 
Thus decomposition \eqref{g0uzdecB} implies
\begin{equation}
\label{Buzetar}
\psi(u)=\tilde{B}(u,\zeta(u))=\frac{\tilde{B}_0(u,\zeta(u))}{(1-A_0(u,\zeta(u)))^2}\,,
\end{equation}
with $\zeta(u)$ determined by \eqref{zetau}, and $A_0(u,\zeta(u)),\ \tilde{B}_0(u,\zeta(u))$ given by \eqref{A0uz}, \eqref{psi0p}.
By analyticity of the denominator, this establishes expansion \eqref{pnuas} with the same definition \eqref{zetau} of $\zeta(u)$ as before.
In particular, the leading coefficients $\frak{m}_1=\frak{m}_{0,1}$ and $\frak{v}_1=\frak{v}_{0,1}$ remain unchanged, with numerical values \eqref{m1num} and \eqref{v1num}, respectively.

Explicit evaluation of $\psi$, by implicit differentiation of \eqref{Buzetar} at $\mathfrak{p}(u,\zeta(u))=0$, also provides the numerical values
\begin{align}
\label{psi0num}
\psi(1) &= -6.141\,865\,397\,757\,192 \ldots\,;   \\
\label{psi1num}
\psi'(1) &= -3.481\,866\,148\,150\,836 \ldots\,;   \\
\label{psi2num}
\psi''(1) &= -0.224\,866\,907\,348\,425 \ldots\,.
\end{align}
These imply the numerical values \eqref{m0num}, \eqref{v0num} of the constant coefficients $\mathfrak{m}_0\,,\ \mathfrak{v}_0$\,, for the refined counting $f$ of signature forests.
Note how these values differ from tree counting in \eqref{m00num}, \eqref{v00num}.
This completes the proof of theorem \ref{thmbasy}.
\qed

\section{Numerical illustrations}\label{Num}

We illustrate our main results with a few numerical examples.
Up to $n=10\,000$, we explore the total counts $a_n$ of full lap signatures $\fS$, alias Hamiltonian Sturm permutations $\sigma$, with $N\!=\!2n\!+\!1$ Neumann equilibria.
See definitions \ref{defSign}, \ref{defint}, and theorems \ref{thmHamrev}, \ref{thmasa}.
For $n=4,13,21,30$, we then explore the Gaussian asymptotics of the normalized refined counts $b_{rq}/a_n$.
The indices $r$ and $q$ refer to the signatures, alias Sturm permutations, with $r$ ODE-centers $\odot$ and $q\!=\!n-r$ periodic orbits of minimal period $2/\ell$, for some lap numbers $\ell=1,2,3,\ldots$\,. 
See \eqref{b/a} and theorems \ref{thmGauss}, \ref{thmbasy}.
All results are based on the generating functions $a,a_0$ and $f,f_0$, respectively.
Only for $n\leq11$ have we verified our present counts against much older lists of all Sturm permutations, which were generated by the methods developed in \cite{firo96,firo99,firo00} and outlined in section \ref{Back}.

\begin{table}[p]
\centering
\begin{tabular}{|| r || r | r | c ||} \hline\hline
 &     &    &          \\[-3mm]
\multicolumn{1}{||c||}{$n$} & \multicolumn{1}{c |}{$a_{0n}$}    & \multicolumn{1}{c |}{$\sim\,\tilde{a}_{0n}$} & $n\cdot\cO(1/n)$  \\[1mm] \hline\hline 
 &     &    &          \\[-3mm]
 $4 $&$ 4 $&$ 2.6083 $&$ -2.1344 $\\
 $13 $&$ 21\,767 $&$ 19902. $&$ -1.2185 $\\
 $21 $&$ 1.3793\times 10^8 $&$ 1.3186\times 10^8 $&$ 0.96534 $\\
 $30 $&$ 3.5585\times 10^{12} $&$ 3.4525\times 10^{12} $&$ 0.92053 $\\
 $1\,000 $&$ 2.8831\times 10^{511} $&$ 2.8806\times 10^{511} $&$ 0.85279 $\\
 $2\,000 $&$ 5.1879\times 10^{1027} $&$ 5.1857\times 10^{1027} $&$ 0.85188 $\\
 $3\,000 $&$ 1.4377\times 10^{1544} $&$ 1.4373\times 10^{1544} $&$ 0.85158 $\\
 $4\,000 $&$ 4.7544\times 10^{2060} $&$ 4.7534\times 10^{2060} $&$ 0.85143 $\\
 $5\,000 $&$ 1.7321\times 10^{2577} $&$ 1.7318\times 10^{2577} $&$ 0.85134 $\\
 $6\,000 $&$ 6.7092\times 10^{3093} $&$ 6.7082\times 10^{3093} $&$ 0.85128 $\\
 $7\,000 $&$ 2.7109\times 10^{3610} $&$ 2.7105\times 10^{3610} $&$ 0.85123 $\\
 $8\,000 $&$ 1.1298\times 10^{4127} $&$ 1.1296\times 10^{4127} $&$ 0.85120 $\\
 $9\,000 $&$ 4.8208\times 10^{4643} $&$ 4.8204\times 10^{4643} $&$ 0.85118 $\\
 $10\,000 $&$ 2.0958\times 10^{5160} $&$ 2.0956\times 10^{5160} $&$ 0.85116 $\\ [-3mm]
 &     &    &         \\           
\hline\hline
\end{tabular} 
\caption[Asymptotic counts $a_{0n}$ of signature trees]{\small\emph{
Total counts $a_{0n}$ of signature trees $\fS$, alias irreducible Hamiltonian Sturm permutations $\sigma$, according to definition \ref{deftree} and \eqref{stack}.
We list counts for fourteen select $n$ from 4 to 10\,000.
Column three lists the asymptotics $a_{0n}\!\sim\!\tilde{a}_{0n}$ of \eqref{gammansim}, with leading coefficient \eqref{B0zeta}.
For the definition of the upscaled relative errors in the last column, see \eqref{nO0}.
All entries are rounded to five decimals.
}}
\label{taba0}
\vspace{1cm}
\begin{tabular}{|| r || r | r | c | c ||} \hline\hline
 &     &    &    &      \\[-3mm]
\multicolumn{1}{||c||}{$n$} & \multicolumn{1}{c |}{$a_n$}    & \multicolumn{1}{c |}{$\sim\,\tilde{a}_n$} & $n\cdot\cO(1/n)$  & $a_{0n}\,/\,a_n$ \\[1mm] \hline\hline 
 &     &    &     &     \\[-3mm]
$ 4 $&$13 $&$25.260 $&$1.9414 $&$0.30769 $\\
$ 13 $&$159\,570 $&$192740. $&$2.2369 $&$0.13641 $\\
$ 21 $&$1.1360\times 10^9 $&$1.2771\times 10^9 $&$2.3200 $&$0.12142 $\\
$ 30 $&$3.0785\times 10^{13} $&$3.3436\times 10^{13} $&$2.3791 $&$0.11559 $\\
$ 1\,000 $&$2.7827\times 10^{512} $&$2.7897\times 10^{512} $&$2.5342 $&$\
0.10361 $\\
$ 2\,000 $&$5.0158\times 10^{1028} $&$5.0222\times 10^{1028} $&$2.5368 $&$\
0.10343 $\\
$ 3\,000 $&$1.3908\times 10^{1545} $&$1.3919\times 10^{1545} $&$2.5377 $&$\
0.10337 $\\
$ 4\,000 $&$4.6005\times 10^{2061} $&$4.6035\times 10^{2061} $&$2.5381 $&$\
0.10334 $\\
$ 5\,000 $&$1.6764\times 10^{2578} $&$1.6772\times 10^{2578} $&$2.5384 $&$\
0.10333 $\\
$ 6\,000 $&$6.4939\times 10^{3094} $&$6.4966\times 10^{3094} $&$2.5386 $&$\
0.10332 $\\
$ 7\,000 $&$2.6241\times 10^{3611} $&$2.6251\times 10^{3611} $&$2.5387 $&$\
0.10331 $\\
$ 8\,000 $&$1.0937\times 10^{4128} $&$1.0940\times 10^{4128} $&$2.5388 $&$\
0.10330 $\\
$ 9\,000 $&$4.6670\times 10^{4644} $&$4.6683\times 10^{4644} $&$2.5388 $&$\
0.10330 $\\
$ 10\,000 $&$2.0290\times 10^{5161} $&$2.0295\times 10^{5161} $&$2.5389 $&$\
0.10329 $\\ [-3mm]
 &     &    &     &    \\           
\hline\hline
\end{tabular} 
\caption[Asymptotics counts $a_n$ of signature forests]{\small\emph{
Total counts $a_n$ of signature forests $\fS$ for the same $n$ as in table \ref{taba0}.
Column three lists the asymptotics $a_n\!\sim\!\tilde{a}_n$ of \eqref{gammansim}, with leading coefficient \eqref{Bzeta}.
For the definition of the upscaled relative errors in column four, see \eqref{nO}.
The last column documents a limiting prevalence \eqref{a0n/an} of approximately $10\%$, for tree among forest counts $a_{0n}/a_n$\,, alias irreducibility among Hamiltonian Sturm permutations, in the sense of \eqref{stack}.
}}
\label{taba}
\end{table}

\begin{table}[p]
\begin{center}
\begin{tabular}{|| c || r | r | r | r ||} \hline\hline
 &     &    &     &     \\[-3mm]
$r$ & $n\!=\!4$    & $n\!=\!13$ & \multicolumn{1}{c |}{$n\!=\!21$} & \multicolumn{1}{c ||}{$n\!=\!30$} \\[1mm] \hline\hline 
 &     &    &     &     \\[-3mm]
 1 & 3    &  924   &  184\,756   & 77\,558\,760    \\
2 & 1    & 1\,588    &  527\,900   & 313\,889\,477    \\
3 &     &   3\,704  &  2\,171\,110   & 1\,829\,806\,591    \\
4 &     &   5\,212  & 6\,281\,200    & 8\,195\,875\,855    \\
5 &     &   5\,105  & 13\,549\,060    & 28\,454\,717\,383    \\  
6 &     &   3\,236  & 21\,907\,952    & 78\,390\,368\,975    \\  
7 &     &   1\,472  & 27\,261\,708    & 173\,646\,331\,179    \\  
8 &     &   424  &  26\,226\,568   & 313\,605\,452\,308    \\  
9 &     &   91  &  19\,806\,459   & 466\,123\,210\,176    \\  
10 &     & 10    & 11\,730\,094    & 575\,164\,508\,971    \\ 
11 &     & 1    &  5\,503\,061   & 592\,771\,070\,809    \\
12 &     &     &  2\,026\,036   & 513\,031\,411\,787    \\
13 &     &     &  590\,777   & 374\,178\,954\,189    \\
14 &     &     &  132\,888   & 230\,691\,526\,573    \\
15 &     &     &  23\,502   & 120\,382\,876\,553    \\  
16 &     &     &  3\,024   & 53\,231\,864\,376    \\  
17 &     &     &  307   & 19\,926\,472\,616    \\  
18 &     &     &  18   & 6\,310\,154\,319    \\  
19 &     &     &  1   & 1\,683\,764\,021    \\  
20 &     &     &     &  377\,652\,125   \\   
21 &     &     &     &  70\,549\,993   \\
22 &     &     &     &  10\,941\,749   \\
23 &     &     &     &  1\,378\,689   \\
24 &     &     &     &  141\,552   \\
25 &     &     &     &  11\,100   \\  
26 &     &     &     &  703   \\  
27 &     &     &     &  27  \\  
28 &     &     &     &  1   \\ [-3mm]
 &     &    &     &     \\           
\hline\hline
 &     &    &     &     \\[-3mm]
$a_{0n}$ & $4$    & $21\,767$ & $137\,926\,421$ & $3\,558\,470\,490\,857$ \\[1mm] \hline\hline 
\end{tabular} 
\end{center} 
\caption[Refined tree counts $f_{0,nr}$ for $n=4,13,21,30$]{\small\emph{Coefficients $f_{0,nr}$ which count full lap signature $\frak{S}$ of \emph{tree type} with $r$ ODE-centers $\odot$ and $q\!=\!n\!-\!r$ pairs of lap numbers $\ell$, alias ODE orbits of minimal period $2/\ell$.
Columns sample choices of $n\in\{4,13,21,30\}$.
Rows indicate $r=1,\ldots,n-2$.
The entries of the first column $n\!=\!4$ refer to tree entries $\checkmark$ in table \ref{tablap} with $r\!+\!q\!=\!4$; specifically these are the $f_{0,41}\!=\!3$ entries 1.3 and the single $f_{0,42}\!=\!1$ entry 2.2(iv).
The entries $f_{0,nr}\!=\!1$ for $r\!=\!n\!-\!2\!\geq\!2$ arise because such a full lap signature tree requires an annular lap list $\underline{S}\!=\!\{1,1\}$ of $q\!=\!2$ periodic orbits which encircle all $r$ centers.
The last row provides the column sums $a_{0n}=\sum_r\,f_{0,nr}$ already listed in table \ref{taba0}.
}}
\label{tabf0}
\end{table}

\begin{table}[p]
\begin{center}
\begin{tabular}{|| c || r | r | r | r ||} \hline\hline
 &     &    &     &     \\[-3mm]
$r$ & $n\!=\!4$    & $n\!=\!13$ & $n\!=\!21$ & $n\!=\!30$ \\[1mm] \hline\hline 
 &     &    &     &     \\[-3mm]
1 & 3  & 924  & 184\,756  & 77\,558\,760  \\
2 & 6  & 4\,760  & 1\,391\,720  & 773\,201\,629  \\
3 & 3  & 14\,758  & 6\,921\,760  & 5\,235\,566\,429  \\
4 & 1  & 28\,576  & 24\,187\,160  & 26\,786\,857\,718  \\
5 &   & 38\,204  & 62\,701\,430  & 105\,897\,127\,937  \\  
6 &   & 34\,772  & 122\,609\,392  & 331\,668\,939\,331  \\  
7 &   & 22\,792  & 185\,315\,596  & 836\,256\,161\,707  \\  
8 &   & 10\,384  & 218\,755\,440  & 1\,722\,306\,886\,392  \\  
9 &   & 3\,480  & 204\,472\,860  & 2\,928\,442\,547\,844  \\  
10 &   & 780  & 152\,048\,900  & 4\,148\,616\,110\,119  \\ 
11 &   & 132  & 90\,681\,558  & 4\,930\,805\,575\,355  \\
12 &   & 12  & 43\,343\,052  & 4\,945\,836\,031\,732  \\
13 &   & 1  & 16\,682\,601  & 4\,204\,912\,182\,497  \\
14 &   &   & 5\,128\,860  & 3\,041\,157\,254\,879  \\
15 &   &   & 1\,264\,640  & 1\,875\,443\,961\,273  \\  
16 &   &   & 244\,288  & 987\,952\,987\,982  \\  
17 &   &   & 37\,434  & 444\,800\,338\,056  \\  
18 &   &   & 4\,236  & 171\,181\,762\,911  \\  
19 &   &   & 380  & 56\,231\,234\,407  \\  
20 &   &   & 20  & 15\,741\,848\,294  \\ 
21 &   &   & 1  & 3\,739\,996\,939  \\
22 &   &   &   & 751\,588\,365  \\
23 &   &   &   & 126\,607\,065  \\
24 &   &   &   & 17\,798\,570  \\
25 &   &   &   & 2\,045\,080  \\  
26 &   &   &   & 192\,231  \\  
27 &   &   &   & 13\,887  \\  
28 &   &   &   & 812  \\  
29 &   &   &   & 29  \\  
30 &   &   &   & 1  \\ [-3mm]
 &     &    &     &     \\                
\hline\hline
 &     &    &     &     \\[-3mm]
$a_n$ & $13$    & $159\,575$ & $1\,135\,976\,084$ & $30\,784\,762\,378\,231$ \\[1mm] \hline\hline 
\end{tabular} 
\end{center} 
\caption[Refined forest counts $f_{nr}$ for $n=4,13,21,30$]{\small\emph{Coefficients $f_{nr}$ which count full lap signatures $\frak{S}$ of \emph{forest type} with $r$ centers and $q\!=\!n\!-\!r$ orbits of (not necessarily minimal) period 2.
Columns sample the same choices $n\in\{4,13,21,30\}$ as table \ref{tabf0}.
Rows indicate $r=1,\ldots,n$.
The first column $n\!=\!4$ refers to \emph{all} entries in table \ref{tablap} with $r\!+\!q\!=\!4$; specifically these are the entries $f_{4r}=3,6,3,2$ for $r=1,2,3,4$, respectively.
The entries $f_{0,nr}\!=\!1$ for $r\!=\!n\!\geq\!2$ refer to the trivial case $q\!=\!0$ without any 2-periodic orbits.
The last row provides the column sums $a_n=\sum_r\,f_{nr}$ already listed in table \ref{taba}.
}}
\label{tabf}
\end{table}

The generating functions $a(z)=\sum_n\,a_nz^n$ count the full lap signatures $\fS$ with $r\!\geq\!1$ ODE-centers $\odot$ and $q\!\geq\!0$ pairs of lap number entries, such that $n\!=\!r\!+\!q$; see \eqref{az}.
Definition \ref{deftree} has represented each signature as a signature forest.
An additional index 0 in $a_0(z)$ and $a_{0n}$ indicates the analogous counts of the connected forests, i.e. of signature trees.

Table \ref{taba0} lists some select counts $a_{0n}$ of signature trees, for $n$ from 4 to 10\,000 (first column).
The second column lists actual counts $a_{0n}$.
Column three compares those to the asymptotics \eqref{gammansim}, i.e.\  $a_{0n}\!\sim\!\tilde{a}_{0n}$, of theorem \ref{thmasa} with leading coefficient \eqref{B0zeta}.
The final column upscales the relative errors of order $\cO(1/n)$ by a factor $n$ to obtain asymptotically constant corrections
\begin{equation}
\label{nO0}
n\cdot \cO_0(1/n)\ :=\ n\cdot\left(\frac{a_{0n}}{\tilde{a}_{0n}}-1\right)\,.
\end{equation}

Table \ref{taba}, in contrast, presents select actual counts $a_n$ for signature forests, tress included.
Column three compares $a_n$ to the asymptotics \eqref{gammansim}, i.e.\  $a_n\!\sim\!\tilde{a}_n$, of theorem \ref{thmasa} with leading coefficient \eqref{Bzeta}.
The final column lists the upscaled relative errors again, i.e.
\begin{equation}
\label{nO}
n\cdot \cO_0(1/n)\ :=\ n\cdot\left(1-\frac{a_n}{\tilde{a}_n}\right)\,.
\end{equation}
Quotients $a_{0n}/a_n$ of the corresponding counts in the second columns of tables \ref{taba0} and \ref{taba}, as listed in the last column of table \ref{taba}, of course confirm the $\cO(1/n)$ asymptotics
\begin{equation}
\label{a0n/an}
\lim_{n\rightarrow\infty} \,\frac{a_{0n}}{a_n} \,=\, \frac{B_0(\zeta)}{B(\zeta)}\,=\, (1-a_0(\zeta))^2\,=\,0.103\,256\,667\,322\,663\ldots\ .
\end{equation}
In \cite{wol17}, counts $a_n^*$ of \emph{all Sturm permutations} with $N\!=\!2n\!+\!1$ equilibria have been derived for $1\leq n\leq20$.
Comparison with our counts $a_n$ seems to suggest crude asymptotic estimates like $a_n^*\sim a_n^2$\,.
The true asymptotic growth of $a_n^*$ remains unknown.

We now turn to the refined counts $b_{0,nr}$ and $b_{nr}$ of full lap signatures $\fS$ with $1\leq r\leq n$ ODE-centers $\odot$ and $q\!=\!n\!-\!r$ pairs of lap number entries.
See \eqref{ab}--\eqref{f0}.
For columns $n=4,13,21,30$, tables \ref{tabf0} and \ref{tabf} provide raw data of $f_{0,nr}\!=\!b_{0,r,n-r}$ and $f_{nr}\!=\!b_{r,n-r}$\,, respectively, in rows $r$.
As usual, the additional index 0 in table \ref{tabf0} refers to lap signatures $\fS$ of tree type, alias irreducible Hamiltonian Sturm permutations $\sigma$ in the sense of \eqref{stack}.
Omitted indices 0 in table \ref{tabf} refer to forests $\fS$ and include stacking; see definition \ref{deftree}.

The columns $n\!=\!4$ in tables \ref{tabf0} and \ref{tabf} summarize the counts resulting from the explicit signature trees and forests $\fS$ in table \ref{tablap}.
We obtain 3 cases and 1 case, i.e.\ the two entries in tree table \ref{tabf0} for $r\!=\!1$ and $r\!=\!2$, respectively, from the checkmarked cases 1.3(i)-(iii) and 2.2(iv) of table \ref{tablap}, which are all symmetric.
The $3,6,3,1$ cases, i.e.\ the four entries for $1\!\leq\!r\!\leq\!n\!=\!4$ in forest table \ref{tabf} arise from cases 1.3(i)-(iii), 2.2(i)-(iv), 3.1(i)-(ii), 4.0(i) in table \ref{tablap}, respectively.
Here we have to re-insert omitted trivial reflections in the nonsymmetric cases 2.2(i), 2.2(ii), and 3.1(i), to reach the full counts.
Facing rapidly growing totals of $a_5\!=\!35,\ a_6\!=\!96$, and so on, we refrained from elaborating on such explicit listings of lap signatures $\fS$ any further.

The single counts $f_{0,n,n-2}\!=\!f_{nn}\!=\!1$, at maximal $r$ with nonzero entries, arise as follows.
In the forest table \ref{tabf}, the maximal number of ODE-centers $\odot$ is $r\!=\!n$.
In that case, 
$q\!=\!n\!-\!r\!=\!0$ 
prohibits any pairs of lap numbers, alias 2-periodic orbits.
Definition \ref{defSign} therefore leaves us with the unique signature $\fS$ of alternating ODE-saddles $\bullet$ and ODE-centers $\odot$\,.

\begin{table}[t!]
\centering
\begin{tabular}{|| r || r | r | r || r | r | r ||} \hline\hline
 &   &   &   &   &   &       \\[-3mm]
\multicolumn{1}{||c||}{$n$} & \multicolumn{1}{c |}{$\boldsymbol{\mu}_{0n}$}    & \multicolumn{1}{c |}{$\tilde{\boldsymbol{\mu}}_{0n}$} & \multicolumn{1}{c ||}{$\cO_{0\mu}$} & 
\multicolumn{1}{c |}{$\boldsymbol{\sigma}_{0n}^2$}    & \multicolumn{1}{c |}{$\tilde{\boldsymbol{\sigma}}_{0n}^2$} & \multicolumn{1}{c ||}{$\cO_{0\sigma^2}$}  \\[1mm] \hline\hline 
 &   &   &   &   &   &       \\[-3mm]
 4 & 1.2500 & 1.2071 & -0.1716 & 0.1875 & 0.7075 & 2.0800 \\
 7 & 2.0769 & 2.3111 & 1.6390 & 1.1095 & 1.2679 & 1.1090 \\
 10 & 3.2198 & 3.4150 & 1.9524 & 1.9527 & 1.8283 & -1.2444 \\
 13 & 4.3932 & 4.5190 & 1.6351 & 2.5622 & 2.3887 & -2.2557 \\
 16 & 5.5443 & 5.6229 & 1.2576 & 3.0659 & 2.9491 & -1.8683 \\
 19 & 6.6723 & 6.7269 & 1.0382 & 3.5797 & 3.5095 & -1.3346 \\
 21 & 7.4175 & 7.4629 & 0.9532 & 3.9330 & 3.8831 & -1.0485 \\
 24 & 8.5302 & 8.5668 & 0.8803 & 4.4757 & 4.4435 & -0.7740 \\
 27 & 9.6396 & 9.6708 & 0.8415 & 5.0272 & 5.0039 & -0.6311 \\
 30 & 10.7475 & 10.7748 & 0.8178 & 5.5827 & 5.5643 & -0.5541 \\[-3mm]
 &   &   &   &   &   &         \\           
\hline\hline
\end{tabular} 
\caption[Asymptotic means and variances of signature trees]{\small\emph{
Means $\boldsymbol{\mu}_{0n}$ and variances $\boldsymbol{\sigma}_{0n}^2$ of signature tree counts $p_{0,nr}\!=\!f_{0,nr}/a_{0n}$\,, for select rows of $4\!\leq\! n\!\leq\!30$.
Column three lists the asymptotics \eqref{tmu0n} of theorem \ref{thmbasy}.
For the definition of the upscaled additive error $\cO_{0\mu}$ in column four, see \eqref{nO0mu}.
Columns five to seven list the analogous data for variances $\boldsymbol{\sigma}_{0n}^2$ and their asymptotics \eqref{tsig0n}, \eqref{nO0sig}.
All entries are rounded to five decimals.
}}
\label{tabGauss0}
%
\vspace{1cm}
\begin{tabular}{|| r || r | r | r || r | r | r ||} \hline\hline
 &   &   &   &   &   &       \\[-3mm]
\multicolumn{1}{||c||}{$n$} & \multicolumn{1}{c |}{$\boldsymbol{\mu}_n$}    & \multicolumn{1}{c |}{$\tilde{\boldsymbol{\mu}}_n$} & \multicolumn{1}{c ||}{$\cO_\mu$} & 
\multicolumn{1}{c |}{$\boldsymbol{\sigma}_n^2$}    & \multicolumn{1}{c |}{$\tilde{\boldsymbol{\sigma}}_n^2$} & \multicolumn{1}{c ||}{$\cO_{\sigma^2}$}  \\[1mm] \hline\hline 
 &   &   &   &   &   &       \\[-3mm]
 4 & 2.1538 & 2.0389 & -0.4600 & 0.7456 & 1.0293 & 1.1351 \\
 7 & 3.1791 & 3.1428 & -0.2540 & 1.4306 & 1.5897 & 1.1138 \\
 10 & 4.2484 & 4.2468 & -0.0167 & 2.0666 & 2.1501 & 0.8356 \\
 13 & 5.3393 & 5.3507 & 0.1486 & 2.6749 & 2.7105 & 0.4635 \\
 16 & 6.4403 & 6.4547 & 0.2304 & 3.2571 & 3.2709 & 0.2207 \\
 19 & 7.5446 & 7.5587 & 0.2662 & 3.8267 & 3.8313 & 0.0866 \\
 21 & 8.2815 & 8.2946 & 0.2767 & 4.2029 & 4.2049 & 0.0424 \\
 24 & 9.3868 & 9.3986 & 0.2833 & 4.7648 & 4.7653 & 0.0127 \\
 27 & 10.4920 & 10.5025 & 0.2852 & 5.3255 & 5.3257 & 0.0046 \\
 30 & 11.5970 & 11.6065 & 0.2854 & 5.8859 & 5.8861 & 0.0051 \\[-3mm]
 &   &   &   &   &   &         \\           
\hline\hline
\end{tabular} 
\caption[Asymptotic means and variances of signature forests]{\small\emph{
Means $\boldsymbol{\mu}_n$ and variances $\boldsymbol{\sigma}_n^2$ of forest counts in the same layout as table \ref{tabGauss0}.}}
\label{tabGauss}
\end{table}

The tree requirement on lap signatures $\fS$ in table \ref{tabf0}, eliminates that alternating saddle-center option altogether, for $n\geq2$.
Indeed, the signature forest then decomposes into $r\!=\!n\!\geq\!2$ disconnected singletons.
The tree requirement in fact mandates an outermost parenthesis pair $\fS=\bullet(\underline{S}_1\ldots\overline{S}_1)\bullet$.
See definitions \ref{defSign}, \ref{deftree}.
Since $r>1$, that pair cannot be central.
As an annular pair, the lap list $\underline{S}_1$ must be nonempty and of even length.
In other words, $r\!=\!n\!-\!q\!\leq\! n\!-\!2$.
By definition \ref{defSign}(vii), \eqref{laplistbc}, the maximal case $r\!=\!n\!-\!2$ of nonzero $f_{0,nr}$ is therefore uniquely realized by a single nonempty lap list $\underline{S}_1\!=\!\{1,1\}$ and an interior chain of $r$ ODE-centers alternating with $r\!-\!1$ ODE-saddles:
\begin{equation}
\label{r=n-2}
\fS=\bullet(\{1,1\}(\odot)\bullet(\ldots)\bullet(\odot) \{1,1\})\bullet \,.
\end{equation}

The last rows of tables \ref{tabf0}, \ref{tabf} list the normalizing column sums $a_{0n}$ and $a_n$\,.
Note how the entry patterns in the right three columns discretize right open parabolas.
Since the length in decimal digits discretizes the logarithm, this phenomenon can already be seen as a first indication of the Gaussian asymptotics for the probability distributions $p_{0,nr}=f_{0,nr}/a_{0n}$ and $p_{nr}=f_{nr}/a_n$\,.
See theorems \ref{thmGauss} and \ref{thmbasy}.

Tables \ref{tabGauss0} and \ref{tabGauss} illustrate the asymptotics of means $\boldsymbol{\mu}$ and variances $\boldsymbol{\sigma}^2$ for the refined counts of tree and forest signatures $\fS$, respectively, and for select ten rows of low $4\leq n\leq30$.
Again, all entries are rounded to five decimals.

Consider the tree case of table \ref{tabGauss0}, first.
The second column lists actual means $\boldsymbol{\mu}_{0n}$ of the probability distributions $\bP(X_{0n}=r)=p_{0,nr}:=f_{0,nr}/a_{0n}$\,; see \eqref{pnu}--\eqref{Y0n}.
Column three compares those to the $\cO_{0\mu}(1/n)$ convergence of theorem \ref{thmbasy} towards
\begin{equation}
\label{tmu0n}
\tilde{\boldsymbol{\mu}}_{0n}:=\mathfrak{m}_1\,n+\mathfrak{m}_{0,0}\,.
\end{equation}
Column four upscales the additive errors of order $\cO_{0\mu}(1/n)$ by a factor $n$ to obtain signed correcting constants
\begin{equation}
\label{nO0mu}
\cO_{0\mu}:=n\cdot(\tilde{\boldsymbol{\mu}}_{0n}-\boldsymbol{\mu}_{0n})\,.
\end{equation}

\begin{figure}[p]
\begin{center}
\includegraphics[width = 0.69\textwidth]{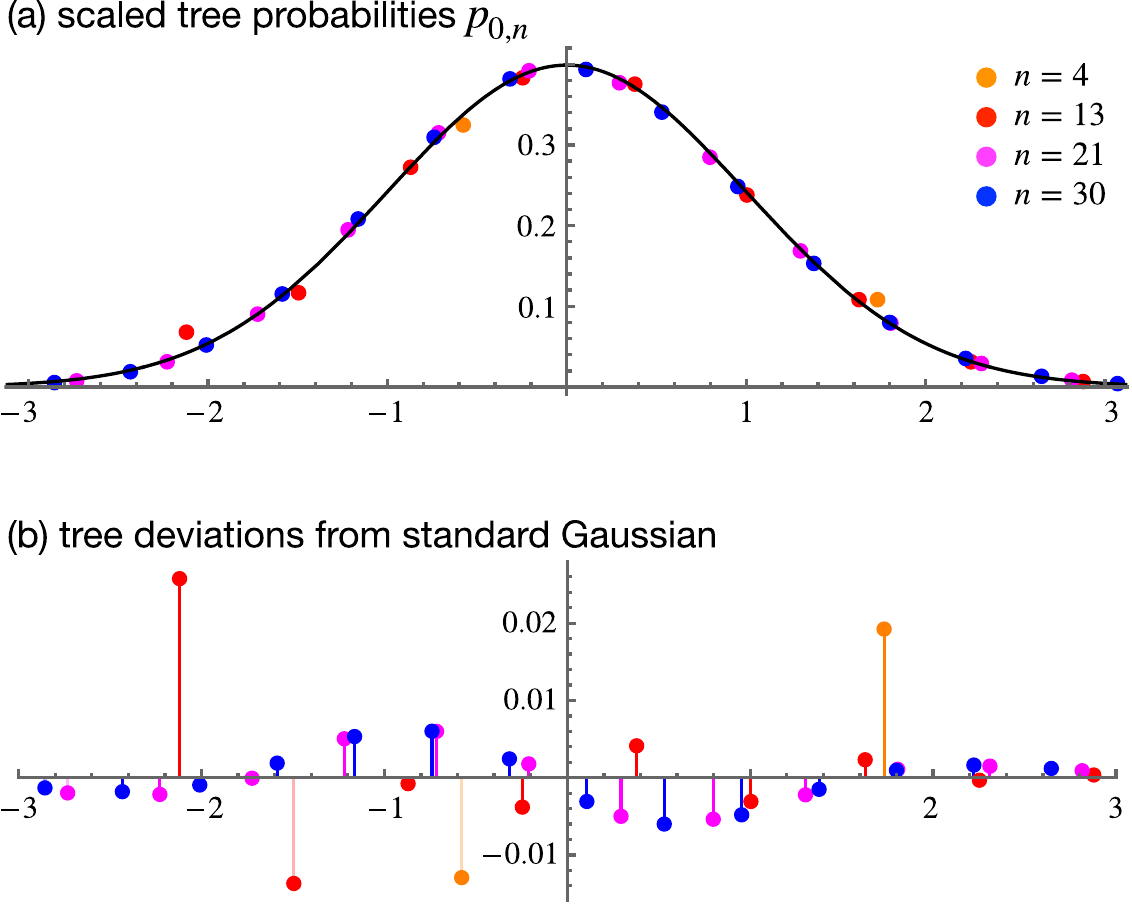}
\end{center}
\caption[Gaussian tree frequencies]{\small\emph{Gaussian frequency distribution $p_{0,n}$ of tree signatures $\fS$ with respect to $1\leq r\leq n-2$ ODE-centers $\odot$.
The distribution is scaled to mean 0 and standard deviation 1, as in \eqref{Y0n}, \eqref{p0scale}.
Colorcoding: $n\!=\!4$ (orange), $n\!=\!13$ (red), $n\!=\!21$ (magenta), and $n\!=\!30$ (blue).
(a) comparison with standard Gaussian. (b) enlarged deviations oscillate around standard Gaussian, and decrease with increasing $n$.}}
\label{figgauss0}
\vspace{1cm}
\begin{center}
\includegraphics[width = 0.69\textwidth]{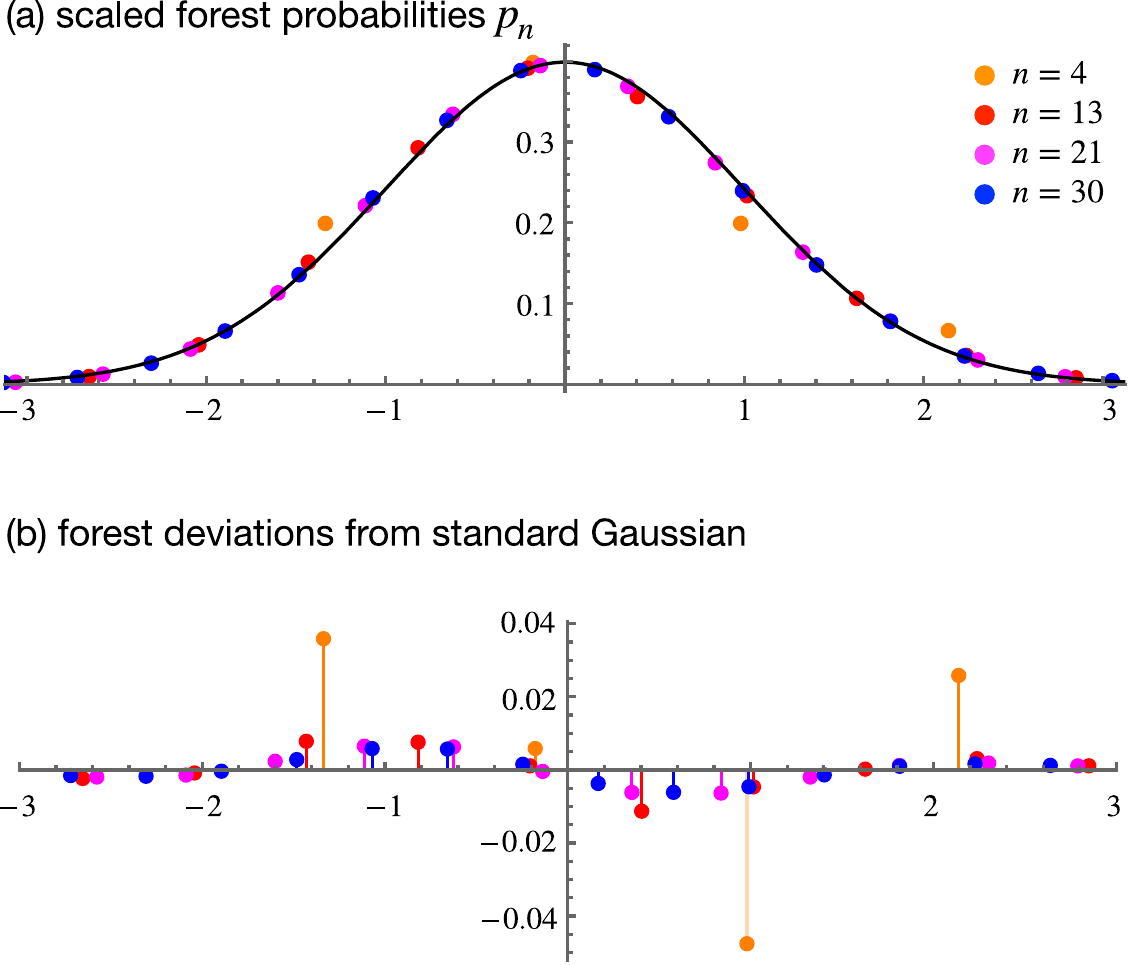}
\end{center}
\caption[Gaussian forest frequencies]{\small\emph{Gaussian frequency distribution $p_n$ of forest signatures $\fS$, for $1\leq r\leq n$, analogously to figure \ref{figgauss0}.
Scaling as in \eqref{Yn}, \eqref{p0scale}.}}
\label{figgauss}
\end{figure}

Columns five to seven list the analogously defined data for variances $\boldsymbol{\sigma}_n^2$; specifically
\begin{align}
\label{tsig0n}
    \tilde{\boldsymbol{\sigma}}_{0n}^2&:=\mathfrak{v}_1\,n+\mathfrak{v}_{0,0}\,;
   \\
\label{nO0sig}
    \cO_{0\sigma}&:=n\cdot(\tilde{\boldsymbol{\sigma}}_{0n}^2-\boldsymbol{\sigma}_{0n}^2)\,.  
\end{align}
Omitting an index 0, table \ref{tabGauss} illustrates the forest case of theorem \ref{thmbasy}, by the same layout.

Here are some conclusions supported by the two tables.
Expectedly, means $\boldsymbol{\mu}$ and variances $\boldsymbol{\sigma}$ increase slightly, when passing from the more restrictive tree case to forest signatures.
The additional flexibility thus gained, by concatenation of tree signatures, favors the placement of additional ODE-centers $\odot$ in $\fS$.
Comparing columns two and three, for means $\boldsymbol{\mu}$, and columns five and six, for variances $\boldsymbol{\sigma}^2$, confirms the convergences of order $\cO(1/n)$ which theorem \ref{thmbasy} has established for full lap signatures of, both, tree and forest type.
Upscaled errors in columns four seem to indicate asymptotic overestimation of means by approximately $0.8/n$ and $0.3/n$ in the tree and forest case, respectively.
Variances seem to be underestimated by approximately $0.5/n$, in the tree case, but overestimated by as little as $0.005/n$, in the forest case.

We conclude with illustrations of the Gaussian asymptotics established in theorem \ref{thmGauss}.
For tree signatures $\fS$, alias irreducible Hamiltonian Sturm permutations $\sigma$, and select fixed $n=4,13,21,30$, we scale the normalized frequency counts $p_{0,nr}=f_{0,nr}/a_{0n},\ 1\leq n\leq n-2$.
Our scaling is analogous to the normalization of $Y_{0n}$ in \eqref{Y0n}.
With means $\mu_{0n}$ and standard deviations $\sigma_{0n}$ from table \ref{tabGauss0}, scaling maps the point $(r\,,p_{0,nr})$ to 
\begin{equation}
\label{p0scale}
(r,\,p_{0,nr})\ \mapsto\ \left(\frac{r-\mu_{0n}}{\sigma_{0n}},\ \sigma_{0n}\cdot p_{0,nr}\right)\,.
\end{equation}
The factor $\sigma_{0n}$ preserves interpretation as a discretized version of a probability density function.
We colorcode $n\!=\!4$ (orange), $n\!=\!13$ (red), $n\!=\!21$ (magenta), and $n\!=\!30$ (blue).
Theorem \ref{thmGauss} only asserts $\cO(1/\sqrt{n})$ sup-norm convergence of the associated cumulative probability distribution functions to the cumulative, i.e.\ integrated, standard Gaussian.
For the interpolating probability density step functions, comparison with the standard Gaussian still leads to reasonably small errors; see figure \ref{figgauss0}(a).
Enlarging deviations from the standard Gaussian, in (b), illustrates decreasing errors of oscillatory type.

Figure \ref{figgauss} provides the analogous illustrations of theorem \ref{thmGauss}, for forest signatures $\fS$. 
The scaling \eqref{Yn} to mean 0 and standard deviation 1 is performed by \eqref{p0scale} again, with all indices 0 omitted.
Although the sample size of the forest case exceeds the tree case by a factor $a_n/a_{0n}\sim(1-a_0)^{-2}$ of almost 10, asymptotically, errors do not increase significantly.

\section{Periodic boundary conditions and reversibility}\label{Per}

We conclude with a few remarks on PDE \eqref{gxuux} under 2-periodic boundary conditions $\cP$\,:
\begin{equation}
\label{perbc}
\mathbf{u}\large{\vert}_{\mathbf{x}=0}^{\mathbf{x}=2}\, =\,\mathbf{u_x}\large{\vert}_{\mathbf{x}=0}^{\mathbf{x}=2}\, =\, 0\,, \quad\textrm{alias}\quad \mathbf{x}\in\bS^1=\bR /2\bZ\,;
\end{equation}
compare \eqref{pbc}.
See \eqref{SPchain} for notations of  various classes of nonlinearities $\mathbf{g}$ under periodic boundary conditions $\cP$, in analogy with the Neumann cases of superscripts $\cN$ addressed so far.
The concept of global attractors $\cA\!=\!\cA_\mathbf{g}^\cP$ in suitable Sobolev spaces $\mathbf{X}\hookrightarrow C^1$ readily extends to periodic boundary conditions, for example under the same sufficient dissipativity condition \eqref{diss} of \cite{mana97}.
For further results on periodic boundary conditions see also \cite{anfi88, fm-p, mat88} and, for some results of collaboration by the present authors on this subject, \cite{firowo04,firowo12b, roc25,rofiln}.

Periodic boundary conditions allow for time-periodic solutions $\mathbf{u}(\mathbf{t},\cdot)$, as a main novel feature.
Explicit Lyapunov functionals like \cite{zele, mat88,firaro,labe,lafi} therefore fail to carry over from the Neumann case $\cN$ to $\cP$. 
For example, \emph{any autonomous planar ODE flow} can be embedded into the PDE dynamics of \eqref{gxuux}, for suitable choices of the nonlinearity $\mathbf{g}\!\in\!\SxP$; see \cite{safi}.
In particular, homoclinic and \emph{saddle-saddle heteroclinic orbits}, between hyperbolic equilibria of the same Morse index, may also occur.
Although invariant planes are not a typical feature of $\mathbf{g}\!\in\! \SxP$, and global attractors $\cA_\mathbf{g}^\cP$ may be of any finite dimension, a Poincaré-Bendixson theorem rules asymptotic $\boldsymbol{\alpha}$- and $\boldsymbol{\omega}$-limit sets of uniformly bounded solutions $\mathbf{u}(\mathbf{t},\cdot)\in\cA^\cP$; see \cite{fm-p}.
The underlying reason is the straightforward extension of zero number dropping \eqref{zdrop}, from $\cN$ to $\cP$.

Transversality \eqref{transv} of stable and unstable manifolds among periodic orbits and equilibria therefore persists \cite{czro08,jora10a}, with the notable exception of the saddle-saddle connections mentioned above.
That exception poses a fundamental obstacle to a purely combinatorial description of connection graphs, here among equilibria and periodic orbits.
Obviously, for lack of distinguished Neumann boundaries in the periodic case $\cP$, we cannot rely on Sturm permutations based on boundary orders $\sigma$ like \eqref{sigma0}, \eqref{sigma1}. 
Well, suppose we replace those by relative winding numbers of temporal snapshots
\begin{equation}
\label{wdg}
\mathbf{x}\ \mapsto\ \left(\mathbf{x},\mathbf{u}(\mathbf{t},\mathbf{x}),\mathbf{u}_\mathbf{x}(\mathbf{t},\mathbf{x})\right)\ \in\ \bS^1\times\bR^2\,.
\end{equation}
Still, the occurrence of saddle-saddle connections among equilibria, at infinite periods, might disrupt homotopy invariance of such descriptions.

For such reasons, we restricted attention to nonlinearities $\mathbf{g}=\mathbf{g}(\mathbf{u},\mathbf{u}_\mathbf{x})$ in the classes 
\begin{equation}
\label{gP}
\SH\subset\SO\subset\SSOP\,, 
\end{equation}
which do not depend on $\mathbf{x}$, explicitly.
Equivalently, we may observe that the semiflow of PDE \eqref{gxuux} on $\mathbf{X}$ then becomes \emph{equivariant} under the action of the group $\bS\bO\!\cong\!\bS^1$, which acts by spatial shifts $\boldsymbol{\xi}\!\in\!\bS^1$ of $\mathbf{x}$.
In other words, $\mathbf{u}(\mathbf{t},\mathbf{x})$ is a solution whenever $\mathbf{u}(\mathbf{t},\mathbf{x}+\boldsymbol{\xi})$ is, for any fixed $\boldsymbol{\xi}$\,.
See \cite{golste88,golste23} for a broader background on equivariance.

Spatial shifts $\boldsymbol{\xi}\!\in\!\bS^1$ ensure that each spatially non-homogeneous equilibrium $\mathbf{v}$ generates a circle's worth of PDE equilibria.
We call such circles $\bS\bO\mathbf{v}$ \emph{frozen waves} $\cF$, to emphasize that they occur as a single symmetry related group orbit. 

By $\bS\bO$-equivariance and zero-number dropping \eqref{zdrop}, nonstationary time-periodic solutions  $\mathbf{u}(\mathbf{t},\mathbf{x})$ turn out to be \emph{rotating waves} $\mathbf{v}\!\in\!\cR_\mathbf{g}^\cP$ at \emph{wave speed} $\mathbf{c}\neq0$\,, i.e.
\begin{equation}
\label{rot}
 \mathbf{u}(\mathbf{t},\mathbf{x})=\mathbf{v}(\mathbf{x}-\mathbf{c}\mathbf{t})\,.
\end{equation}
In other words, $\mathbf{v}(\mathbf{x})$ solves the ODE
\begin{equation}
\label{gvpc}
\begin{aligned}
    \mathbf{v}_\mathbf{x}&=\mathbf{p} \,,  \\
    \mathbf{p}_\mathbf{x}&=-\mathbf{g}(\mathbf{v},\mathbf{p})+\mathbf{c}\,\mathbf{p}  
\end{aligned}
\end{equation}
with wave speed parameter $\mathbf{c}\neq0$.
Periodic boundary conditions \eqref{perbc} require $\mathbf{v}$, and hence $\mathbf{p}$, to be 2-periodic of minimal period
\begin{equation}
\label{TlP}
0<T_\mathbf{g}(\mathbf{v})=2/\ell\,,
\end{equation}
for some positive integer $\ell\!\in\!\bN$.
Spatially non-homogeneous PDE equilibria $\mathbf{v}$ can be subsumed as ``rotating waves'' at wave speed $\mathbf{c}\!=\!0$, which we call \emph{frozen waves} $\mathbf{v}\!\in\!\cF_\mathbf{g}^\cP$; see \cite{fiedlerhabil}.
Note how rotating and frozen waves, alike, occur as nontrivial circular group orbits $\bS\bO\mathbf{v}$.
Sometimes, they are therefore also called \emph{relative equilibria}.
Spatially homogeneous PDE equilibria, i.e.\ equilibria $(\mathbf{v},0)$ of ODE \eqref{gvpc}, are denoted as $\mathbf{v}\!\in\!\cE_\mathbf{g}^\cP$\,.

Such notation is quite analogous to decomposition \eqref{AEH} in the Neumann case $\cN$. 
Omitting indices $\mathbf{g}$, the global attractor $\cA^\cP$ for periodic boundary conditions now decomposes as
\begin{equation}
\label{cAP}
\cA^\cP=\cE^\cP\cup\cF^\cP\cup\cR^\cP\cup\cH^\cP\,.
\end{equation}
Here $\cH^\cP$ denotes \emph{heteroclinic orbits} as defined in \eqref{het}, except that sources and targets $\mathbf{v}_\pm\in\cE^\cP\cup\cF^\cP\cup\cR^\cP$ are now to include frozen and rotating waves.
See \cite{mana97}.

As for hyperbolicity, we can only require \emph{normal hyperbolicity} of $\mathbf{v}\in\cE^\cP\cup\cF^\cP\cup\cR^\cP\subset\mathbf{X}$.
I.e., the trivial eigenvalue and Floquet exponent $\lambda\!=\!0$ of frozen and rotating waves $\mathbf{v}$, respectively, which is generated by the circle action $\bS\bO$, is assumed to be algebraically simple, and any other unrelated purely imaginary Floquet exponents are absent.
In addition to dissipativity, we henceforth impose this generic assumption of normal hyperbolicity on nonlinearities $\mathbf{g}$ in the Sturm classes \eqref{gP}; see \cite{jora10b}.

The main advantage of the $\bS\bO$-equivariant class $\mathbf{g}\!\in\!\SSOP$ is persistence of transversality \eqref{transv} involving the center-unstable and center-stable manifolds of frozen and rotating waves $\mathbf{v}\in\cF^\cP\cup\cR^\cP$.
See \cite{czro08,jora10a}.
The main reasons are normal hyperbolicity and the duplicity of eigenvalues $\lambda$ at homogeneous equilibria $\mathbf{v}\!\in\!\cE^\cP$.
Unless we exclude frozen waves, $\cF^\cP\!=\!\emptyset$, this does not imply $C^0$ Morse-Smale structural stability, however. See \cite{roc12}.
As a remedy, we can again represent the global attractor by a connection graph $\cC^\cP$, where vertices now represent $\bS\bO$ orbits of $\mathbf{v} \in\cE^\cP\cup\cF^\cP \cup\cR^\cP$.
Then the connection graph remains unchanged, up to isomorphism, under small perturbations.
See \cite{firowo04,firowo12b,roc12,roc25,rofiln}.

Global homotopies of nonlinearities $\mathbf{g}$ which preserve normal hyperbolicity and transversality led to a global classification of connection graphs $\cC^\cP$ under periodic boundary conditions.
See \cite{rofiln} for a recent summary.
The successive homotopies of $\mathbf{g}$ led, first, to a freezing of all rotating waves.
Second, for wave speeds frozen at $\mathbf{c}\!=\!0$, the nonlinearity $\mathbf{g}$ was symmetrized by a homotopy from the general $\bS\bO$-equivariant class of $\mathbf{g}\!\in\!\SSOP$ to the $\bO$-equivariant spatially reversible class $\mathbf{g}\!\in\!\SOP$; see \eqref{grev}.
The $\bO$-action is generated by the $\boldsymbol{\xi}$-shifts of $\bSO$ and the \emph{flip}
\begin{equation}
\label{flip}
(\kappa\, \mathbf{u})(\mathbf{t},\mathbf{x}):=\mathbf{u}(\mathbf{t},2-\mathbf{x})) 
\end{equation}
through $\mathbf{x}=0,1$. 
Indeed, the spatially reversible case \eqref{grev} requires the nonlinearity $\mathbf{g}(\mathbf{u},\mathbf{p})$ to be even in $\mathbf{p}\!=\!\mathbf{u_x}$\,.
Zero number dropping \eqref{zdrop} of $z(\mathbf{u}-\kappa \mathbf{u})$ then prohibits rotating waves: $\cRP\!=\!\emptyset$\,.

Spatial reversibility $\mathbf{g}\!\in\!\SOP$ reveals strong relations between Neumann and periodic boundary conditions $\cN$ and $\cP$.
Indeed, let $\mathbf{X}^\kappa$ denote the space of $\kappa$-fixed, spatially 2-periodic $\mathbf{u}\!\in\!\mathbf{X}$, i.e.\ $\kappa\mathbf{u}=\mathbf{u}$ in \eqref{flip}.
Then $\mathbf{X}^\kappa$ is a linear invariant subspace of the $\cP$-semiflow \eqref{gxuux}, \eqref{perbc} on $\mathbf{X}$.
By restriction, in fact, such $\cP$-solutions in $\mathbf{X}^\kappa$, defined for $\mathbf{x}\!\in\!\bS^1\!=\!\bR/2\bZ$, define $\cN$-solutions on $\mathbf{x}\!\in\![0,1]$.
On the other hand, any solution $\mathbf{u}\!\in\!\cAgN$ can be lifted, by reflection through the Neumann boundary, to become a solution
\begin{equation}
\label{NtoP}
\mathbf{u}\in\cAgN\cong(\cAgP)^\kappa:=\cAgP\cap\mathbf{X}^\kappa\,.
\end{equation}
This linearly embeds the Neumann attractors $\cAgN$ into the attractors $\cAgP$ under periodic boundary conditions.

Conversely, spatial reversibility of $\mathbf{g}$ also allows us to shift any spatially 2-periodic frozen wave $\mathbf{v}\!\in\!\cFP$ to be \emph{represented} by Neumann boundaries \eqref{Nbc} on $\mathbf{x}\!\in\![0,1]$, in the connection graph.
From ODE \eqref{gvp}, it now follows that each individual frozen wave orbit $\bS\bO\mathbf{v}\in\cFP$ possesses two distinct such Neumann representatives: $\mathbf{\underline{v}}$ and $\mathbf{\overline{v}}$, as in \eqref{vpair}.
From that point of view, only periodic boundary conditions $\cP$ reveal the hidden $\bO$-symmetry \cite{gomasc84} which leads to the ominous pairing $\mathbf{\underline{v}},\mathbf{\overline{v}}$ in the Neumann problem $\cN$.
For spatially reversible nonlinearities $\mathbf{g}\!\in\!\SOP$, we can therefore proceed via ODE period maps \eqref{Tmap}, exactly as in the Neumann case of section of section \ref{Intro}, to determine all remaining equilibria $\mathbf{v}$.
In conclusion, combining corollary 2.3 of \cite{rofiln} and our present theorem \ref{thmHamrev} validates the following result.
\begin{theorem}\label{thmCPN}
For any nonlinearity $\mathbf{f}\!\in\!\SSOP$ there exists a Hamiltonian (and hence reversible) nonlinearity $\mathbf{g}\!\in\!\SH$ such that
\begin{equation}
\label{CPN}
\cC_\mathbf{f}^\cP\cong\cCP\cong\cCgN /\!\sim\,.
\end{equation}
Here the equivalence relation $\sim$ identifies each vertex pair $\mathbf{\underline{v}},\mathbf{\overline{v}}$, in the Neumann problem of \eqref{vpair}, to a separate single vertex.
\end{theorem}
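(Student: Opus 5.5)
The plan is to pass from $\mathbf{f}\in\SSOP$ to a Hamiltonian $\mathbf{g}$ in three homotopy stages, and then to compare the periodic and Neumann connection graphs of $\mathbf{g}$ through the invariant subspace $\mathbf{X}^\kappa$ of \eqref{NtoP}.

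The first stage is to invoke corollary 2.3 of \cite{rofiln}, which packages the global homotopies summarized above. First, all rotating waves of $\mathbf{f}$ are frozen, i.e.\ wave speeds are homotoped to $\mathbf{c}=0$. Second, the frozen nonlinearity is symmetrized into the $\bO$-equivariant reversible class $\SOP$. Along these homotopies, normal hyperbolicity and transversality \eqref{transv} of center-(un)stable manifolds are preserved. By the persistence remarks after \eqref{cAP}, the connection graph $\cC^\cP$ is then unchanged up to isomorphism. This produces a reversible $\mathbf{h}\in\SO$ with $\cC_\mathbf{f}^\cP\cong\cC_\mathbf{h}^\cP$.

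The second stage replaces $\mathbf{h}$ by a Hamiltonian $\mathbf{g}$. The Neumann Sturm permutation $\sigma_\mathbf{h}^\cN$ is an integrable Sturm involution, so theorem \ref{thmHamrev} supplies $\mathbf{g}\in\SH$ with $\sigma_\mathbf{g}^\cN=\sigma_\mathbf{h}^\cN$. Equivalently, by \eqref{Signsigma}, $\mathbf{g}$ and $\mathbf{h}$ have the same full lap signature $\fS$. I then need to show that $\cC^\cP$ for a reversible nonlinearity depends only on $\fS$, or only on $\sigma^\cN$. This should follow from the representation of each frozen wave $\bS\bO\mathbf{v}$ by its two Neumann representatives $\underline{\mathbf{v}},\overline{\mathbf{v}}$, together with the classification in \cite{rofiln,roc25} of $\cC^\cP$ via period maps and lap signatures. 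With that, $\cC_\mathbf{h}^\cP\cong\cCP$.

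The last step, and the main obstacle, is the isomorphism $\cCP\cong\cCgN/\!\sim$. On vertices the correspondence is clear. The $2r+1$ homogeneous equilibria match, and each frozen wave circle $\bS\bO\mathbf{v}$ meets $\mathbf{X}^\kappa$ exactly in the pair $\underline{\mathbf{v}},\overline{\mathbf{v}}$, since reflection symmetry holds only at the two extremum phases. For edges, a Neumann heteroclinic orbit lifts by \eqref{NtoP} to a $\kappa$-symmetric periodic heteroclinic orbit, so every edge of $\cCgN/\!\sim$ is an edge of $\cCP$.

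The converse is the hard direction: a heteroclinic orbit between group orbits in $\cAgP$ must imply a $\kappa$-symmetric one between suitable representatives. I would try first to use Morse index and zero number bookkeeping. Zero numbers of differences to $\kappa$-symmetric representatives are computable from $\sigma_\mathbf{g}$, and the connection criterion of \cite{firo96}, extended to the $\bO$-equivariant periodic setting as in \cite{firowo12b,roc25}, should show that the periodic connection criterion between group orbits coincides with the Neumann criterion for some choice of representatives. Failing that, I would cite the corresponding comparison result of \cite{rofiln} directly, since the paper states that the theorem follows by combining corollary 2.3 there with theorem \ref{thmHamrev}.
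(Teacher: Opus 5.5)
Your proposal takes essentially the paper's route: there the theorem is obtained simply by combining corollary 2.3 of \cite{rofiln} (freezing and symmetrization to a reversible $\mathbf{h}\in\SO$, together with the comparison $\cC^\cP\cong\cC^\cN/\!\sim$ for reversible nonlinearities) with theorem \ref{thmHamrev}, which is what your first two stages and your fallback citation do. Your Stage~2 claim needs no separate argument: apply that comparison to both $\mathbf{h}$ and $\mathbf{g}$, and use that $\sigma_\mathbf{g}^\cN=\sigma_\mathbf{h}^\cN$ determines $\cC^\cN$ \cite{firo96} and the pairing $\underline{\mathbf{v}},\overline{\mathbf{v}}$ via \eqref{Signsigma}; this already gives $\cC_\mathbf{h}^\cP\cong\cC_\mathbf{h}^\cN/\!\sim\;\cong\cCgN/\!\sim\;\cong\cCP$, so the zero-number attempt at the hard direction is likewise not needed.
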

See figure \ref{figCI3}(f) for an illustration of the vertex identification $\sim$, in the case of the three-dimensional Chafee-Infante problem $\mathrm{CI}_3$\,.
For the detailed fate of the Morse indices $i(\mathbf{v})$ under the transition between $\cP$ and $\cN$, alias state spaces $\mathbf{X}$ to $\mathbf{X}^\kappa$, we refer to the original literature \cite{firowo04,firowo12b,roc12,roc25}. 
See also \cite{rofiln}, which includes an explicit discussion of the 21 connection graphs $\cCgP$ with $\mathbf{g}\!\in\!\SHN\!=\!\SHP$ and $2r+q\leq6$.
As before, $r$ counts the ODE-centers and $q$ the 2-periodic orbits of ODE \eqref{gvp} with some minimal period $T\!=\!2/\ell$.

In section \ref{Sign}, we have already seen how the period maps $T_\mathbf{g}$ of spatially reversible $\mathbf{g}\!\in\!\SON$ lead to Hamiltonian Sturm permutations $\sigma_\mathbf{g}$ which, on the one hand, are in 1-1 correspondence to full lap signatures.
Background section \ref{Back}, on the other hand, has shown how Sturm permutations, in general, and Hamiltonian Sturm permutations, in particular, determine their Neumann connection graphs $\cCgN$. 

The counting results on full lap signatures in our present paper have neither settled the counting of directed Neumann connection graphs $\cCgN$, up to isomorphism, nor the counting of Neumann global attractors $\cAgN$, up to $C^0$ orbit equivalence -- not even for spatially reversible or Hamiltonian nonlinearities $\mathbf{g}\!\in\!\SON\!\supset\!\SHN$.
At best, our results constitute a first step in those directions.
For example, we have already mentioned that our results collaterally address the refined counting of signed Thom-Smale complexes of Neumann Sturm global attractors \cite{firo22}, in the reversible case.
Similarly, the reduction \eqref{CPN} of Neumann connection graphs $\cCgN$ to connection graphs $\cCP$ and the attractor embedding \eqref{NtoP} may serve as an encouragement towards the counting challenges for nonlinearities $\mathbf{g}\!\in\!\SSOP$ under periodic boundary conditions, as well.
And the combinatorics for general $\mathbf{x}$-dependent nonlinearities $\mathbf{g}\!\in\!\SxN$ remain as untouched as ever.


\end{document}